\def\todo#1{\textcolor{Mahogany}%
{\footnotesize\newline{\color{Mahogany}\fbox{\parbox{\textwidth-15pt}{\textbf{todo: } #1}}}\newline}}
\newcommand{\isom}{\cong} 
\theoremstyle{definition}
\numberwithin{equation}{section}
\newcommand{\vol}{\mathrm{vol}}
\newcommand{\Amp}{\mathrm{Amp}}
\newcommand{\Nef}{\mathrm{Nef}}
\newcommand{\NN}{\mathbb{N}}
\newcommand{\QQ}{\mathbb{Q}}
\newcommand{\RR}{\mathbb{R}}
\newcommand{\cC}{\mathcal{C}}
\newcommand{\cF}{\mathcal{F}}
\newcommand{\cG}{\mathcal{G}}
\newcommand{\cL}{\mathcal{L}}
\newcommand{\cN}{\mathcal{N}}
\newcommand{\fm}{\mathfrak{m}}
\newcommand{\fn}{\mathfrak{n}}
\newcommand{\Fe}{F^{e}_{*}}
\newcommand{\effcone}{\mathrm{Eff}}
\newcommand{\bigcone}{\mathrm{Big}}
	\title{The $F$-signature function on the ample cone}
    \author{Seungsu Lee}
    \address[S.~Lee]{Department of Mathematics, University of Utah, Salt Lake City, UT 84112, USA}
    \email{\href{mailto:slee@math.utah.edu}{slee@math.utah.edu}}
    \thanks{Lee was supported in part by NSF Grant  \#2101800 and  \#1952522}
    \author{Suchitra Pande}
	\address[S.~Pande]{Department of Mathematics\\University of Michigan\\Ann Arbor, 
		MI 48109-1043\\USA}
	\email{\href{mailto:swarajsp@umich.edu}{swarajsp@umich.edu}}
    \thanks{Pande was partially supported by the NSF grants \#1952399, \#1801697 and \#2101075}
\begin{document}
 
	\begin{abstract}
	   For any fixed globally $F$-regular projective variety $X$ over an algebraically closed field of positive characteristic, we study the $F$-signature of section rings of $X$ with respect to the ample Cartier divisors on $X$. In particular, we define an $F$-signature function on the ample cone of $X$ and show that it is locally Lipschitz continuous. We further prove that the $F$-signature function extends to the boundary of the ample cone. We also establish an effective comparison between the $F$-signature function and the volume function on the ample cone. As a consequence, we show that for divisors that are nef but not big, the extension of the $F$-signature is zero. 
	\end{abstract}
	\maketitle

	\section{Introduction}

 The \emph{$F$-signature} is an invariant of the singularities of a Noetherian, $F$-finite local ring $R$ of prime characteristic. First arising implicitly in \cite{SmithVanDenBerghSimplicityOfDiff} and formally defined in \cite{HunekeLeuschkeTwoTheoremsAboutMaximal}, this invariant measures the asymptotic growth of the number of \emph{Frobenius splittings} of $R$ (See \autoref{F-sigdfn}). The positivity of the $F$-signature corresponds exactly to $R$ being a strongly $F$-regular singularity \cite{AberbachLeuschke}; only when $R$ is regular, does the $F$-signature achieve its maximum value of $1$. This invariant has also attracted attention as a candidate for the positive characteristic analog of the \emph{normalized volume} of a Kawamata log-terminal (klt) singularity, extending the established analogy between strongly $F$-regular and klt singularities; see \cite{LiLiuXuAGuidedTour}, \cite{TaylorInvAdjFsig}, \cite{MaPolstraSchwedeTuckerFsigBirational}. There have been applications of the $F$-signature to bounding the sizes of the \'etale fundamental group and the torsion subgroup of the divisor class group; see \cite{carvajal-rojas_fundamental_2016}, \cite{MartinTorsionDivisors}, \cite{CarvajalRojasFiniteTorsors} and \cite{PolstramaximalCM}.

    
    In the global setting, \emph{globally $F$-regular}  varieties (\autoref{defn.GlobFreg}), introduced in \cite{SmithGloballyFRegular} are the positive characteristic analogs of \emph{log-Fano} type varieties enjoying additional properties such as satisfying a \emph{Kawamata-Viehweg vanishing} theorem \cite{SchwedeSmithLogFanoVsGloballyFRegular}.  It was shown in \cite{SmithGloballyFRegular} that a projective variety $X$ is globally $F$-regular if and only if the $\emph{section ring}$ $S(X, \cL)$ (\autoref{sectionringdfn}) is strongly $F$-regular for some (equivalently, every) ample invertible sheaf $\cL$. Since the $F$-signature is positive for all strongly $F$-regular rings, it is natural to ask: \emph{How does the $F$-signature of the section ring $S(X,\cL)$ vary with $\cL$?} The purpose of this paper is to answer this question. We prove:

	\begin{thm}\label{mainthm1intro}
	Fix any globally $F$-regular projective variety $X$ over an algebraically closed field $k$ of positive characteristic $p$. Assume that the dimension of $X$ is positive. Then, the $F$-signature function $ L \mapsto s_{X}(L)$, assigning to any ample Cartier divisor $L$, the $F$-signature of the section ring of $X$ with respect to $L$, satisfies the following properties:
	\begin{enumerate}
	    \item (\cite{VonKorffthesis}, \cite{CarvajalRojasFiniteTorsors},  \autoref{propertiesofs}) The $F$-signature function $s_{X}$ naturally extends to a unique, well-defined, real-valued function
	    $$ s_{X}: \Amp_\QQ(X) \to \RR $$
	    on the set of rational classes in the ample cone of $X$ satisfying:
	    $$ s_{X}(\lambda L) = \frac{1}{\lambda} s_{X}(L) \quad \text{for all ample $\QQ$-divisors $L$ and all $\lambda \in \QQ_{>0}$.}    $$
	    \item (\autoref{continuityatrational}) The function $s_X$ is continuous on the rational ample cone of $X$, with respect to the usual topology on the N\'{e}ron-Severi space.
	    \item (\autoref{realcor}) The function $s_{X}$ extends continuously to all real classes in the ample cone of $X$.
	\end{enumerate}
	\end{thm}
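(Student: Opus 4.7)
The theorem assembles three assertions whose substance is distributed across the main body of the paper; the plan is therefore to indicate the mechanism behind each in turn.

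For (1), the starting point is the classical observation that for any ample Cartier divisor $L$ and any positive integer $n$, the section ring $S(X, nL)$ is the $n$-th Veronese subring of $S(X, L)$. The transformation rule for the $F$-signature under such a Veronese construction, together with the invariance of $s_X$ on the integral ample cone, is essentially contained in \cite{VonKorffthesis} and \cite{CarvajalRojasFiniteTorsors} and will be recorded in \autoref{propertiesofs}; it yields the identity $s_X(nL) = \tfrac{1}{n} s_X(L)$ and shows that $s_X$ descends to a function on integral ample classes in the N\'eron-Severi space. For an arbitrary rational ample class $D \in \Amp_{\QQ}(X)$, I would choose any positive integer $m$ with $mD$ integral Cartier and set $s_X(D) := m \cdot s_X(mD)$; two applications of the Veronese rule show this value is independent of $m$ and recovers the homogeneity $s_X(\lambda L) = \lambda^{-1} s_X(L)$ on all rational multiples. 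Global $F$-regularity of $X$ guarantees $s_X(D) > 0$ throughout.

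The main obstacle is (2). I would aim to prove the stronger assertion that $s_X$ is locally Lipschitz continuous on the rational ample cone: for every compact $K \subset \Amp(X)$ there is a constant $C_K > 0$ such that
$$\lvert s_X(L_1) - s_X(L_2) \rvert \le C_K \lVert L_1 - L_2 \rVert$$
for all rational ample classes $L_1, L_2 \in K$. The route advertised in the abstract is via an effective comparison between $s_X$ and the volume function $\vol_X$ on the ample cone. Concretely, one expresses $s_X(L)$ as a normalized limit over Frobenius iterates of free-summand counts in $F^{e}_{*} S(X,L)$, and then bounds the change in these counts when $L$ is perturbed by a small integral Cartier class. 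Since the total rank of $F^{e}_{*} S(X,L)$ is controlled (at leading order in $e$) by $\vol_X(L)$, which is polynomial on the ample cone, a sufficiently effective comparison between the two functions transfers the requisite quantitative regularity to $s_X$. This is the content of \autoref{continuityatrational}.

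For (3), once (2) supplies local Lipschitz continuity on the rational ample cone, the passage to real classes is essentially formal. Given any $D \in \Amp(X)$, I would choose a sequence of rational ample classes $D_n \to D$; the Lipschitz estimate on a compact neighbourhood of $D$ forces $\{ s_X(D_n) \}$ to be Cauchy in $\RR$, and a second application shows the limit is independent of the approximating sequence. Declaring $s_X(D)$ to be this common limit defines a continuous extension of $s_X$ to all of $\Amp(X)$, and the local Lipschitz constants from (2) automatically persist, so the extended function is itself locally Lipschitz. This packages into \autoref{realcor}.
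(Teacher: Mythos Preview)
Your outline for (3) is correct and matches the paper's \autoref{realcor} exactly: local Lipschitz continuity on rational classes formally upgrades to a continuous (indeed Lipschitz) extension to real classes.

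For (1), however, you have skipped a genuine ingredient. The Veronese scaling rule from \cite{VonKorffthesis} and \cite{CarvajalRojasFiniteTorsors} only handles the ambiguity of writing a rational class as $\tfrac{a}{b}D$; it does \emph{not} by itself show that $s_X$ descends to numerical equivalence classes. You assert ``invariance of $s_X$ on the integral ample cone'' as though it were in the cited references, but it is not: the paper proves separately (\autoref{numericalequivalence}) that on a globally $F$-regular variety every numerically trivial invertible sheaf is actually trivial. The argument uses vanishing of higher cohomology for nef sheaves (\autoref{vanishingforGFR}) together with invariance of Euler characteristics in the algebraic equivalence class and Kleiman's polynomial behaviour of $\chi(\cL^n)$. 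Without this step, $s_X$ is not even defined on $\Amp_{\QQ}(X)$.

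For (2), your sketch points in the right direction but the proposed mechanism is not the one that actually works. Bounding the free rank by the generic rank (hence by the volume) only gives an upper bound for $s_X(L)$; it does not control $|s_X(L)-s_X(L')|$, because there is no monotonicity of $s_X$ in $L$ even when $L'-L$ is effective. The paper's key device is a \emph{sandwich}: for $L' = L + \tfrac{1}{n}H$ with $H$ big, one chooses $b$ with $nL - bH$ big and factors $\Fe\cO_X(mnbL) \hookrightarrow \Fe\cO_X(mb(nL+H)) \hookrightarrow \Fe\cO_X(mn(b+1)L)$ (\autoref{factorization}). The outer terms have $F$-signatures related by the already-established scaling $s_X(n(b+1)L) = \tfrac{b}{b+1}s_X(nbL)$, which closes the loop and produces a bound involving $\vol(L+\tfrac{1}{n}H)-\vol(L)$ plus terms of order $\tfrac{1}{b}$ (\autoref{maininequalitylem}). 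One also needs the effective degree bound \autoref{lem.Biglowerdim} to truncate the sums uniformly. Finally, the resulting constant $C(L)$ must be shown to behave like $O(\|L\|^{-2})$ so that, via the scaling $s_X(rL)=\tfrac{1}{r}s_X(L)$, it becomes uniform on compacta; your sketch does not address this last point either.
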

	
	\autoref{mainthm1intro} provides us with a new tool for the study of globally $F$-regular varieties. Such varieties have found various applications, for instance, to the three dimensional minimal model program in positive characteristic \cite{HaconXuThreeDimensionalMinimalModel} and in the study of Fano type complex varieties \cite{GongyoSingularitiesofcoxrings}. For other investigations regarding globally $F$-regular varieties, see \cite{gongyo_rational_2015}, \cite{GongyoTakagiInjectivityThmforGFR} and \cite{KawakamiBogomolovSommese}.
	
	 Another motivation for considering the $F$-signature function comes from the \emph{volume function} on the big cone of a projective variety. On a projective variety $X$ over an algebraically closed field, to any Cartier divisor $D$ on $X$, we can associate a non-negative real number called the \emph{volume of $D$}, measuring the growth of the global sections of multiples of $D$. A foundational result in the theory of volumes is that the volume of a big divisor $D$ depends only on its numerical equivalence class. Moreover, it extends suitably to all $\RR$-divisors and varies continuously as $D$ varies on the N\'eron-Severi space of $X$. See \cite[Section 2.2]{LazarsfeldPositivity1} and \cite{LazarsfeldMustataConvexbodies} for the details. The study of volumes of divisors has been important in birational geometry; for example, see \cite[Section 2.2]{LazarsfeldPositivity1}, \cite{LazarsfeldMustataConvexbodies}, \cite{BoucksomVolumeofLineBundle}, \cite{EinLazMusNakPopAsymptoticInvariantsofLineBundles}, \cite{HaconMckernanBoundednessofpluricanonicalmaps}, \cite{TakayamaPluricanonicalsystems}, and \cite{KuronyaAsymptoticcohomologicalfunctions}. 
	 
	 \autoref{mainthm1intro} parallels the theory of the \emph{$F$-signature function} (\autoref{F-sigfunction}) on the ample cone of a globally $F$-regular variety with the volume function on the big cone. This perspective was first considered in \cite{VonKorffthesis}, where \autoref{mainthm1intro} was proved in the special case when $X$ is a toric variety.

    
   

    

    The ample cone is an open cone in the N\'eron-Severi space of a projective variety $X$, and its closure is represented by the set of nef divisors on $X$. Hence, it is natural to ask if the $F$-signature function $s_{X}$ from \autoref{mainthm1intro} has a natural extension to the nef cone. We show that this is indeed true:

   \begin{thm}[\autoref{extensionthm}] \label{extensionthmintro}
       Suppose $X$ is a globally $F$-regular projective variety. Then the $F$-signature function $s_{X}$ extends continuously to all non-zero classes of the Nef cone of $X$. Moreover, if $L$ is a nef Cartier divisor which is not big, then $s_{X}(L) = 0$.
   \end{thm}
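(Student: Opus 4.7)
My plan is to treat the extension separately on two subsets of the nef boundary: the non-big nef classes, where the extension should be zero, and the big but non-ample nef classes, where the extension should remain positive.

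For the non-big nef case, which is the second assertion of the theorem, I would directly invoke the effective comparison between $s_X$ and the volume function $\vol$ advertised in the abstract. Presumably this takes the form $s_X(A) \le C \cdot \vol(A)^{\alpha}$ on the ample cone, for some constants $C, \alpha > 0$ depending only on $X$. Since the volume function extends continuously to all of $N^1(X)_{\RR}$ and vanishes on nef but non-big classes, any sequence of ample classes $A_i \to L$ (with $L$ nef but not big) forces $\vol(A_i) \to 0$ and hence $s_X(A_i) \to 0$. One then defines $s_X(L) := 0$, independent of the approximating sequence.

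For a class $L$ on the nef boundary that is big but not ample, the argument is more delicate. I would use the local Lipschitz continuity of $s_X$ from Theorem \ref{mainthm1intro} and show that the local Lipschitz constants remain bounded in a neighborhood of $L$. This would allow me to extract a well-defined limit along any sequence of ample classes converging to $L$ via the Cauchy criterion, and to check that the limit is independent of the chosen sequence. Combining this with the previous case and the continuity already known on the open real ample cone yields a well-defined continuous extension to the entire non-zero nef cone.

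The main obstacle I anticipate is the second case: controlling the Lipschitz constants uniformly in a neighborhood of a big but non-ample nef class. The local Lipschitz continuity on the ample cone, as stated, could in principle yield constants that blow up near the boundary. I expect the resolution to proceed by bounding the Lipschitz constant in terms of the volume (or its $1/\dim X$-th root) of a nearby ample class, using the effective comparison in an essential way. Since the volume is continuous on $N^1(X)_{\RR}$ and is bounded below by a positive number in a neighborhood of a big class, this would give the desired uniform control and complete the extension.
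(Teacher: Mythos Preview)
Your overall two-case strategy matches the paper's proof exactly: non-big nef classes via the volume comparison, and big-but-not-ample nef classes via uniform Lipschitz bounds near the boundary. Two small corrections are in order.

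First, the form you guess for the volume comparison, $s_X(A)\le C\cdot \vol(A)^\alpha$ with $C,\alpha$ depending only on $X$, cannot hold on the full ample cone: under $A\mapsto \lambda A$ the left side scales like $\lambda^{-1}$ while the right scales like $\lambda^{d\alpha}$, so letting $\lambda\to 0$ gives a contradiction. The paper's actual bound is
\[
s_X(D)\ \le\ \frac{C_1^{d+1}}{(d+1)!}\cdot \frac{\vol(D)}{\|D\|^{d+1}},
\]
with $C_1$ depending only on $X$ and the chosen norm. This has the correct scaling and is exactly what you need: for $A_i\to L$ with $L$ nef, non-big, and $L\neq 0$, the denominator $\|A_i\|^{d+1}$ stays bounded away from $0$, while $\vol(A_i)\to 0$, so $s_X(A_i)\to 0$. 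Your argument then goes through unchanged.

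Second, for the big nef case you anticipate controlling the Lipschitz constant by bounding it in terms of $\vol$ near $L$. The paper's mechanism is slightly different: it observes that in the proof of local Lipschitz continuity on the ample cone, the only place ampleness of the ambient cone was used was in choosing the auxiliary constant $C_4(L)$ (the radius for which a ball around $L$ stays inside a cone), and that step only requires \emph{bigness}. So one chooses a basis of big classes containing $L$ in its interior and reruns the Lipschitz estimate verbatim, obtaining uniform constants on a neighborhood of $L$. Your Cauchy-sequence conclusion is then exactly right.
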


    The proof of \autoref{mainthm1intro} and \autoref{extensionthmintro} consists of several steps. First, we need to verify that the $F$-signature function is well-defined on the rational ample cone of $X$. We do this in Section 3. The main result here is that on globally $F$-regular projective varieties, numerical equivalence, and $\QQ$-linear equivalence coincide. This is reminiscent of the same fact for log-Fano type varieties over the complex numbers. Once we have this, we prove the continuity of the $F$-signature function in Section 4. This needs several ideas towards analyzing \emph{Frobenius splittings} of linear systems; a sketch of the proof is presented in \autoref{sketchofproof}. We further utilize these ideas to extend the $F$-signature function to all non-zero nef divisors in Section 5. Lastly, in \autoref{localbounds} we prove a local effective upper bound for the $F$-signature function.

\vskip 12pt

Throughout this paper, unless specified otherwise, all rings are assumed commutative with a unit and are of positive characteristic $p$. $k$ will denote an algebraically closed field of characteristic $p$. All varieties are assumed to be integral, separated schemes of finite type over $k$.
 
	\section{Preliminaries}

	\subsection{$F$-signature}
	
 Let $R$ be any ring of prime characteristic $p$. Then $R$ is naturally equipped with the \emph{Frobenius morphism}, $F: R \to R$ sending $r \mapsto r^p$.  Since $R$ has characteristic $p$, $F$ defines a ring homomorphism, allowing us to view $R$ as a new $R$-module obtained via restriction of scalars along $F$. We denote this new $R$-module by $F_{*} R$ and its elements by $F_{*} r$ (where $r$ is an element of $R$). Concretely, $F_{*} R$ is the same as $R$ as an abelian group, but the $R$-module action is given by:
 $$ r\cdot F_*s := F_{*} r^{p}s \textrm{\quad for  $r\in R$ and $F_*s \in F_*R$}   .$$
 Throughout, we will assume that $R$ is essentially of finite type over $k$, which also makes it  $F$-finite, i.e., $F_{*} R$ is a finitely generated $R$-module. Similarly, for any natural number $e \geq 1$, we have the iterate of the Frobenius, $F^{e} : R \to R$ sending $r \mapsto r^{p^{e}}$ and the $R$-module $\Fe R$ obtained by restricting scalars along $F^e$. We will be interested in invariants of $R$ defined by analyzing the $R$-module structure of the modules $\Fe R$.
 
 	
 	\begin{dfn}[Free rank] \label{frrkkdfn}
 	Let $M$ be a finitely generated module over a local ring $R$. Consider a decomposition:
 	$$ M \isom R^{a(M)} \oplus N      $$
 	where $N$ has no $R$-summands. Then, since $R$ is local, the number $a(M)$ is independent of the decomposition chosen, and is called the \emph{free rank} of $M$. 
 	\end{dfn}

	\begin{dfn}[$F$-signature] \label{F-sigdfn}
 		Let $(R, \fm ,k)$ be a local ring, and $a_e(R)$ denote the free rank of $\Fe R$ (\autoref{frrkkdfn}). Then the \emph{$F$-signature} of $R$ is defined to be the limit:
 		$$ s(R) := \lim_{e \to \infty} \frac{a_{e}(R)}{p^{ed}} $$
 		where $d$ is the Krull dimension of $R$. This limit exists by \cite{TuckerFSigExists}.
 	\end{dfn}
 	
 	The $F$-signature  of $R$ admits an alternate description as follows: We say a map of $R$-modules $\phi: M \to N$ \emph{splits} if there exists an $R$-module map $\psi: N \to M$ such that $\psi \circ \phi = \text{id}_{M}$. Define the subset $I_e \subseteq R$ as 
 	\[I_e = \left\{x\in R \mid \text{the map } R \to F^e_*R \text{ sending } 1\mapsto F^e_* x \text{ does not split}\right\}.\]
 		Then, we observe that $I_e$ is an ideal of $R$
 	and by \cite[Proposition 4.5]{TuckerFSigExists}, the free rank of $\Fe R$ equals $l_R(R/I_e)$, the length of the $R$-module $R/I_{e}$. Hence, the $F$-signature of $R$ can be defined as the limit:
 	$$    s(R) = \lim _{e \to \infty} \frac{l_{R}(R/I_{e})}{p^{ed}} . $$
 	
 	Though the definition of $F$-signature is given for a local ring $(R, \fm, k)$, we may also work with $\NN$-graded rings $(S, \fm ,k)$ i.e. $S$ is $\NN$-graded with $S_{0} = k$ and $\fm = S_{>0}$. We next relate the local and graded situations.
 
	\begin{dfn}[Graded free rank]
	    	Let $(S, \fm, k)$ be an $\NN$-graded ring, finitely generated over $k$, with $S_{0} = k$ and $M$ a finitely generated $\ZZ$-graded module over $S$. Then we can decompose $M$ as a graded $S$-module as:
	    	$$      M \isom P \oplus N $$
	    	where $P$ is a graded free $S$-module (i.e. a direct sum of $S(j)$, the shifted rank $1$ free modules, for various $j \in \ZZ$) and $N$ is a graded module with no graded free summands. Then the rank of $P$ is independent of the chosen decomposition and we define it to be the \emph{graded free rank} of $M$ over $S$ (denoted by  $a_{\mathrm{gr}}(M)$).
	\end{dfn}

 		\begin{lem} \label{fvsgf}
 		Let $(S, \fm, k)$ and $M$ be as above. Then the free rank of $M_{\fm}$ over the local ring $S_{\fm}$ is the same as the graded free rank of $M$.
        \end{lem}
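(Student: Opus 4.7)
The plan is to prove the equality by establishing the two inequalities separately. The direction $a_{\mathrm{gr}}(M) \leq a(M_\fm)$ is immediate because localization preserves direct sums: any graded decomposition $M \cong \bigoplus_{i=1}^{r} S(j_i) \oplus N$ realizing the graded free rank localizes to $M_\fm \cong S_\fm^{r} \oplus N_\fm$, as each $S(j_i)_\fm \cong S_\fm$.

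For the reverse inequality, I would first write $M \cong P \oplus N$ with $P$ graded free of rank $a_{\mathrm{gr}}(M)$ and $N$ having no graded free summand. The inequality then reduces to the key claim that $N_\fm$ has no free $S_\fm$-summand, because then the decomposition $M_\fm \cong S_\fm^{a_{\mathrm{gr}}(M)} \oplus N_\fm$ realizes the free rank in the sense of \autoref{frrkkdfn}. I would prove the contrapositive: assuming $M_\fm$ admits a free summand, there exist $\tilde m \in M_\fm$ and $\tilde\psi \in \Hom_{S_\fm}(M_\fm, S_\fm)$ with $\tilde\psi(\tilde m) = 1$. Using the natural isomorphism $\Hom_S(M, S)_\fm \cong \Hom_{S_\fm}(M_\fm, S_\fm)$, valid since $M$ is finitely generated over the Noetherian ring $S$, I clear denominators to obtain $m \in M$, $\psi \in \Hom_S(M, S)$, and $s \in S \setminus \fm$ with $\psi(m) = s$.

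The crux---and the main obstacle---is to convert this ungraded splitting data into a homogeneous one. Since $M$ is finitely generated, $\Hom_S(M, S)$ carries a natural $\ZZ$-grading, so I can decompose into homogeneous pieces $m = \sum_j m_j$, $\psi = \sum_i \psi_i$, and $s = \sum_l s_l$. Extracting the degree-zero part of the identity $\psi(m) = s$ yields $\sum_j \psi_{-j}(m_j) = s_0$, and since $s \notin \fm$ the component $s_0 \in S_0 = k$ is a unit; hence $u := \psi_{-j_0}(m_{j_0}) \in k^\times$ for some index $j_0$. The homogeneous element $m_{j_0}$ defines a graded map $\phi \colon S(-j_0) \to M$ sending $1 \mapsto m_{j_0}$, while $\psi_{-j_0}$ defines a graded map $\bar\psi \colon M \to S(-j_0)$; their composition $\bar\psi \circ \phi$ is multiplication by the unit $u$, so $u^{-1}\bar\psi$ is a graded splitting of $\phi$, exhibiting $S(-j_0)$ as a graded free summand of $M$. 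This obstacle is surmounted precisely because $S_0 = k$ is a field, which forces the degree-zero component of the splitting identity to be a unit.
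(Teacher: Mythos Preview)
Your proof is correct and follows essentially the same approach as the paper: reduce to showing that a graded module with no graded free summand has no free summand after localizing at $\fm$, then use the grading on $\Hom_S(M,S)$ together with the isomorphism $\Hom_S(M,S)_\fm \cong \Hom_{S_\fm}(M_\fm,S_\fm)$ to extract a homogeneous splitting from an ungraded one via the degree-zero component. The only slip is notational---after setting up the contrapositive for $N$, you revert to writing $M$; the argument you give works verbatim with $N$ in place of $M$.
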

        \begin{proof}
            See \cite[Proposition 5.7]{DeStefaniPolstraYaoGlobalFsplittingRatio} \qedhere
        \end{proof}


        Now, we describe the $F$-signature of $\NN$-graded rings, relating it to the (local) $F$-signature \emph{at the vertex}. For similar discussions relating the local and global situations, see \cite[Section 3]{SmithGloballyFRegular}, \cite[Section 4]{SmithVanishingSingularitiesAndEffectiveBounds}, and \cite[Section 2.2]{VonKorffthesis}.
        
        Let $S$ be an $\NN$-graded ring. Then $\Fe S$ is also naturally an $\frac{1}{p^{e}} \NN$-graded $S$-module by taking
        $$  (\Fe S) _{\frac{i}{p^{e}}}    = \Fe S_{i}. $$ This gives rise to the $\NN$-grading on $\Fe S$ given by
        $$ (\Fe S)_{n} = \bigoplus  _{0 \leq i \leq p^e -1} (\Fe S)_\frac{i+np^e}{p^{e}}     .$$
        Thus,
        $\Fe S$ decomposes as
        $$  \Fe S = \bigoplus_{0 \leq i \leq p^e -1} \bigoplus _{j \geq 0} \Fe S_{i + j p^e}    $$
        as an $\NN$-graded $S$-module. 
        
\begin{dfn}[$F$-signature of $\NN$-graded rings] \label{Fsigofgradedrings}
      Let $(S, \fm,k)$ be an $\NN$-graded, finitely generated $k$-algebra, with $S_{0} = k$. Then, we define the $F$-signature of $S$ to be the limit:
      $$ \lim _{e \to \infty} \frac{a_{e, \textrm{gr}}(S)}{p^{ed}}        $$
      where $a_{e, \text{gr}}(S)$ is the graded free rank of $\Fe S$ and $d$ denotes the Krull dimension of $S$.
      We note that by \autoref{fvsgf}, the $F$-signature of $S$ coincides with the $F$-signature of $S_{\fm}$, the localization of $S$ at the maximal ideal $\fm$.
\end{dfn}

\subsection{Section Rings:}
  	The $\NN$-graded rings we will be interested in arise as the section rings of projective varieties over $k$ with respect to some ample divisor.
  	
 		\begin{dfn}[Section Rings and Modules] \label{sectionringdfn}
		Let $X$ be a projective variety over $k$, $\cL$ an ample invertible sheaf on $X$ and $\cF$ a coherent sheaf on $X$. Then the $\NN$-graded ring $S$ defined by
		$$ S =	S(X, \cL) :=  \bigoplus _{n \geq 0} H^{0}(X, \cL^{n})	$$
		is called the \emph{section ring} of $X$ with respect to $\cL$. The affine scheme $\Spec(S)$ is called the \emph{cone over $X$} with respect to $\cL$. The \emph{section module} of $\cF$ with respect to $\cL$ is a $\ZZ$-graded $S$-module $M$ defined by
		$$M = M(X, \cL) := \bigoplus _{n \in \ZZ} H^{0}(X, \cF \otimes \cL^{n}) 	.$$
		Similarly, the sheaf corresponding to $M$ on $\Spec(S)$ is called the \emph{cone over $\cF$} with respect to $\cL$.
	\end{dfn}

In the next lemma, we record some useful principles concerning direct summands  of sheaves on a proper variety.

\begin{lem}\label{lem.directsummand}
    On a proper variety $X$ over $k$, let $\cL$, $\cM$ be invertible sheaves, and $\cF$, $\cG $ be coherent sheaves. Then,
    \begin{enumerate}
        \item If $\cL$ is not a direct summand of $\cF$ and $\cG$, then $\cL$ is also not a summand of $\cF \oplus \cG$.
        \item If $\cF \isom \cL^{\oplus n} \oplus \cG$ and $\cL$ is not a summand of $\cG$, then, $n$ is the maximum number of $\cL$ summands of $\cF$ (in any decomposition).
        \item Assume $\cL \not\cong \cM$, and both $\cL$ and $\cM$ are summands of $\cF$, then $\cL \oplus \cM$ is a summand of $\cF$.  
    \end{enumerate}
\end{lem}

\begin{proof}
 Note that an $\cO_X$-summand (i.e., a summand isomorphic to $\cO_X$) of a coherent sheaf $\cF$ is equivalent to a non-zero global section $ s \in H^0 (X, \cF ) $ and a map $ \varphi \in \Hom_{\cO_X} (\cF ,  \cO_X) $ such that $\varphi(s) \neq 0$.
    \begin{enumerate}
        \item By twisting by $\cL^{-1}$, we may assume that $\cL = \cO_X$. An $\cO_X$-summand of $\cF \oplus \cG$ is given by a global section $$ s = (s_1, s_2) \in H^0 (X, \cF \oplus \cG) =  H^{0}(X, \cF) \oplus H^0 (X, \cG)$$ and a map
        $$ \varphi = (\varphi_1, \varphi_2) \in \Hom_{\cO_X} (\cF \oplus \cG,  \cO_X) = \Hom_{\cO_X} (\cF, \cO_X) \oplus \Hom_{\cO_{X}} (\cG, \cO_X) $$
        such that $\varphi(s) \neq 0$. However,
        $ \varphi(s) = \varphi_1 (s_1) + \varphi_2 (s_2) $. So, if $\varphi(s) \neq 0$, then $\varphi_{i} (s_i) \neq 0$ for some $i =1,2$, giving an $\cO_X$-summand of either $\cF$ or $\cG$, which is a contradiction.

        \item Again, twisting by $\cL^{-1}$, we may reduce to the case when $\cL = \cO_X$. Suppose that there is another decomposition $\cF \cong \cO_X^{\oplus (n + m)} \oplus \cG'$ for some $m>0$. Let $\varphi : \cO_X^{\oplus (n + m)} \bigoplus \cG' \to \cO_X^{\oplus n}\bigoplus \cG$ be an isomorphism. Now, consider the map $\psi: H^0 (X, \cO_{X} ^{\oplus (n+m)} ) \to H^0 (X, \cO_X ^{\oplus n})$ induced by the inclusion of $\cO_X ^{\oplus (n+m)}$ into $\cF$, the isomorphism $\varphi$ and the projection onto $\cO_{X} ^{\oplus n}$. Since $m$ is positive, there exists a non-zero section $s \in \cO_X^{\oplus (n+m)}$ such that $\psi (s) = 0$.
        
        Now write $\varphi (s, 0) = (0, g)$ for some $g \in H^0 (X, \cG)$. Note that $(s,0)$ gives an $\cO_{X}$-summand of $\cF$. Hence, $g$ must be an $\cO_X$-summand of $\cG$, which is a contradiction, since $\cG$ was assumed to have no $\cO_X$-summands.

        \item Since $\cL$ is a direct summand of $\cF$, there is some $\cG$ such that $\cF \cong \cL \oplus \cG$. Now, by part (a), if $\cM$ is a direct summand of $ \cL \oplus \cG$, then $\cM$ is direct summand of either $\cL$ or $\cG$. However, since $\cM \not\cong \cL$, $\cM$ must be a direct summand of $\cG$. Hence, $\cL \oplus \cM$ is a direct summand of $\cF$.  \qedhere
    \end{enumerate}
\end{proof}

\subsection{$F$-regularity:}

\begin{dfn}[Strong $F$-regularity] \cite{HochsterHunekeTightClosureAndStrongFRegularity}
      Let $R$ be a Noetherian $F$-finite ring of characteristic $p$. Then, $R$ is said to be \emph{strongly $F$-regular} if for any element $c \in R$ that is not contained in any minimal prime of $R$, there exists an integer $e \gg 0$, such that, the following map
      \begin{equation*}
      \begin{split}
          R \to \Fe R \\
          1 \mapsto \Fe c
      \end{split}
      \end{equation*}   
      splits as a map of $R$-modules.
\end{dfn}

\begin{dfn}[Global $F$-regularity]\label{defn.GlobFreg} \cite[Definition 3.2]{SchwedeSmithLogFanoVsGloballyFRegular} Let $X$ be a normal variety over $k$. Then $X $ is said to be \emph{globally $F$-regular} if for any effective Weil divisor $D$ on $X$, there exists an integer $e \gg 0$, such that, the natural map 
\begin{equation*} 
\cO_X \to F^e_* \cO_X(  D)
 \end{equation*}
splits as a map of $\cO_{X}$-modules.

\end{dfn}

\begin{rem} When $X= \Spec(R)$ is an affine variety, $X$ being globally $F$-regular is equivalent to $R$ being strongly $F$-regular \cite{SmithGloballyFRegular}.

\end{rem}

\begin{rem}\label{FregularityremarK}
A local ring $R$ is strongly $F$-regular if and only if its $F$-signature $s(R)$ is positive \cite{AberbachLeuschke}.
\end{rem}

\begin{thm} \cite[Theorem 3.10]{SmithGloballyFRegular} 
    Let $X$ be a projective variety over $k$. Then, $X$ is globally $F$-regular if and only if the section ring $S(X, \cL)$ (\autoref{sectionringdfn}) with respect to some (equivalently, every) ample invertible sheaf $\cL$  is strongly $F$-regular.
\end{thm}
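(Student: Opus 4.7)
The plan is to exploit the dictionary between graded $S$-modules and coherent sheaves on $X$ (\autoref{lem.SectionRingVSCoherentSheaf}), combined with the equality of graded and local free ranks (\autoref{fvsgf}), to translate between Frobenius splittings on $X$ (required for global $F$-regularity) and Frobenius splittings at the vertex of the cone (required for strong $F$-regularity of $S$). The key observation is that splittings of $\cO_X$-modules correspond to graded splittings of section modules, which via \autoref{fvsgf} correspond to free summands at the maximal ideal $\fm = S_{>0}$.

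For the forward direction (globally $F$-regular implies strongly $F$-regular), I would first check strong $F$-regularity at $\fm$. It suffices to show that for every nonzero homogeneous $c \in S_n$ (the ideal of non-splitters being homogeneous), the map $S \to F^e_* S$ sending $1 \mapsto F^e_* c$ admits a splitting for some $e \gg 0$. Such a $c$ is a nonzero section of $\cL^n$, defining an effective Cartier divisor $D_c \in |\cL^n|$. Global $F$-regularity (with $\Delta = 0$) yields $e$ for which $\cO_X \to F^e_* \cO_X(D_c)$ splits. Twisting by $\cL^m$, applying the projection formula $F^e_* \cO_X(D_c) \otimes \cL^m \cong F^e_*(\cO_X(D_c) \otimes \cL^{p^e m})$, taking global sections, and assembling over all $m \geq 0$, the splitting on $X$ translates to a graded splitting of the corresponding $S$-module map, which then descends to a splitting at $\fm$ via \autoref{fvsgf}. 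Strong $F$-regularity away from $\fm$ follows from the fact that $\Spec(S) \setminus V(\fm)$ is essentially a line bundle over $X$ and can be checked directly on $X$.

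For the converse, given any effective Weil divisor $D$ on $X$, I would use ampleness of $\cL$ to choose $n \gg 0$ such that $\cO_X(-D) \otimes \cL^n$ admits a nonzero global section $t$. Multiplying $t$ by the canonical section $s_D$ of $\cO_X(D)$ produces a nonzero element $c := t \cdot s_D \in H^0(X, \cL^n) = S_n$ with $\divisor(c) = D + \divisor(t) \geq D$. By strong $F$-regularity applied to $c$, there exists $e$ for which $S \to F^e_* S$, $1 \mapsto F^e_* c$ splits at $\fm$. Invoking \autoref{fvsgf} to extract a graded splitting and passing through the section module correspondence produces a splitting $\cO_X \to F^e_* \cO_X(\divisor(c))$. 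Pre-composing with the inclusion $\cO_X(D) \hookrightarrow \cO_X(\divisor(c))$ (coming from $\divisor(c) - D \geq 0$), we obtain the required splitting of $\cO_X \to F^e_* \cO_X(D)$.

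The main obstacle is the careful bookkeeping required when passing between sheaf-theoretic splittings on $X$ and graded module splittings on $S$: one must track how Frobenius pushforwards interact with twists by $\cL$ via the projection formula, and verify that a (non-graded) splitting at the vertex provided by strong $F$-regularity can always be refined to a graded splitting, which is precisely the content of the homogeneous decomposition argument used in the proof of \autoref{fvsgf}.
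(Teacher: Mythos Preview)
The paper does not give its own proof of this statement; it is quoted as \cite[Theorem 3.10]{SmithGloballyFRegular} and used as a black box. So there is no ``paper's proof'' to compare against beyond the original reference.

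Your outline is essentially the standard argument relating the two definitions (Smith's original definition via section rings and the Schwede--Smith splitting definition adopted in \autoref{defn.GlobFreg}). A couple of small points you may want to tighten. First, in the forward direction you reduce to homogeneous $c$; this is fine, but the justification is not that ``the ideal of non-splitters is homogeneous'' for a single $e$ --- rather, given arbitrary nonzero $c$ with lowest homogeneous piece $c_{n_1}$, a \emph{graded} splitting $\varphi$ for $c_{n_1}$ sends the higher-degree pieces of $c$ into $\fm$, so $\varphi(F^e_* c)$ is a unit in $S_\fm$. Second, your remark that strong $F$-regularity away from $\fm$ ``can be checked directly on $X$'' is correct but terse: the punctured spectrum of $S$ is an $\bA^1\!\setminus\!\{0\}$-bundle over $X$, and global $F$-regularity of $X$ implies $X$ is locally strongly $F$-regular, whence so is any smooth affine bundle over it. Alternatively, one can invoke the standard fact that for an $\NN$-graded domain, strong $F$-regularity holds globally as soon as it holds at $\fm$.
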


Combining with \autoref{FregularityremarK}, $X$ is globally $F$-regular if and only if  the $F$-signature $s(S(X, \cL)) $ is positive for some (equivalently,
every) ample invertible sheaf $\cL$ on $X$.

\begin{thm}[\cite{SmithGloballyFRegular}, Corollary 4.3] \label{vanishingforGFR}
Let $X$ be a projective, globally $F$-regular variety over $k$. Suppose $\cL$ is a nef invertible sheaf over $X$. Then,
$$  H^{i}(X, \cL) = 0 \quad \text{for all $i >0$}.   $$
\end{thm}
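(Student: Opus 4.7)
The plan is to combine Fujita's vanishing theorem with the splittings guaranteed by global $F$-regularity, using the fact that the Frobenius morphism is affine (hence pushforward along it preserves higher cohomology).

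First, I would fix an auxiliary very ample invertible sheaf $\cA$ on $X$. By Fujita's vanishing theorem applied to $\cA$, there exists an integer $m_0 \geq 1$ such that for every nef invertible sheaf $\cN$ and every $m \geq m_0$:
\[
H^i(X, \cA^m \otimes \cN) = 0 \quad \text{for all } i > 0.
\]
Since $\cA^{m_0}$ is ample (and can be arranged to be globally generated), I can choose an effective Cartier divisor $D$ with $\cO_X(D) \cong \cA^{m_0}$.

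Next, I would invoke the global $F$-regularity of $X$: applied to the effective divisor $D$, it yields an integer $e \geq 1$ such that the natural map $\cO_X \to F^e_* \cO_X(D) = F^e_* \cA^{m_0}$ splits as a map of $\cO_X$-modules. Tensoring this split injection with $\cL$ and using the projection formula gives a split injection of $\cO_X$-modules
\[
\cL \hookrightarrow \cL \otimes F^e_* \cA^{m_0} \cong F^e_*\bigl(\cL^{p^e} \otimes \cA^{m_0}\bigr).
\]
Taking cohomology and using that $F^e$ is an affine morphism (so $H^i(X, F^e_*\cF) = H^i(X, \cF)$ for every quasi-coherent $\cF$), I would obtain a split injection
\[
H^i(X, \cL) \hookrightarrow H^i\bigl(X, \cL^{p^e} \otimes \cA^{m_0}\bigr).
\]

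Finally, since $\cL$ is nef, so is $\cL^{p^e}$, and the right-hand side vanishes by the choice of $m_0$ from Fujita's theorem. Hence $H^i(X, \cL) = 0$ for all $i > 0$, as desired. The only subtle point is ensuring that one can pick an effective divisor representative $D$ for some positive power of $\cA$ so that global $F$-regularity applies directly in the form stated in \autoref{defn.GlobFreg}; this is immediate from very ampleness of $\cA$ by taking a global section, so I do not expect any real obstacle in the argument.
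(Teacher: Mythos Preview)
Your proposal is correct and follows the standard argument for this result (which is essentially the proof in the cited reference \cite{SmithGloballyFRegular}); the paper itself does not supply a proof but merely cites Smith's paper. The combination of Fujita vanishing with the splitting $\cO_X \to F^e_*\cO_X(D)$ and the projection formula is exactly the intended mechanism, and all the steps you outline go through without issue.
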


\section{Definition of the $F$-signature Funtion}

  In this section, we will define an $F$-signature function on the rational ample cone of a globally $F$-regular projective variety.  The \emph{rational ample cone}, consisting of numerical equivalence classes of ample $\QQ$-divisors on $X$ will be denoted by $\Amp_{\bQ} (X)$. Recall that in the N\'eron-Severi space $N^1 _{\RR} (X)$, $\Amp _{\QQ}(X)$ is the set of rational points of the open cone $\Amp_{\RR}(X)$ (which consists of classes of ample $\RR$-divisors on $X$). Hence, $\Amp_{\QQ} (X)$ has a natural topology, inherited from any norm on $\text{N}^1 _{\RR} (X)$. We refer to \cite[Chapter 1]{LazarsfeldPositivity1} for the details.
 
 \begin{dfn} \label{F-sigfunction}
 Let $X$ be a globally $F$-regular projective variety over $k$ (\autoref{defn.GlobFreg}). Suppose that $\dim (X) >0$. The $F$-signature function
 $$ s_X: \text{Amp}_{\QQ}(X) \to \RR    $$
on the rational ample cone of $X$, is defined as follows:
 \begin{enumerate}
     \item If the class $[L] \in \text{Amp}_{\QQ}(X)$ is defined by an integral Cartier divisor $L$, then we define $s_{X}([L])$ to be the $F$-signature (\autoref{F-sigdfn}) of the section ring $S(X, L)$ (\autoref{sectionringdfn}) of $L$:
     $$ s_X([L]) := s(S(X,L)).    $$
     \item If the class $[L]$ is defined by a rational multiple of an integral Cartier divisor i.e. $L = \frac{a}{b} D$ where $D$ is an integral Cartier divisor on $X$, then we define:
     $$ s_X([L]) : = \frac{b}{a} s_X([D]) = \frac{b}{a} s(S(X,D))  .  $$
 \end{enumerate}
 \end{dfn}
 The rest of this section is devoted to checking that the function $s$ is indeed well-defined.

\begin{rem}
If $\dim (X) = 0$, we define the $F$-signature function as $s_X(L) = 1$ for any ample divisor on $X$. Indeed, $X$ is just a point and the only divisor on $X$ is 0. 
\end{rem}
 
 \begin{thm} \label{propertiesofs}
Let $X$ be a globally $F$-regular projective variety over $k$ with $\dim(X)$ positive. Then, \autoref{F-sigfunction} gives a well-defined $F$-signature function $s_X$ on the rational ample cone of $X$, satisfying the identity:
 $$ s_X\left(\frac{a}{b}L\right) = \frac{b}{a}s_X(L)        $$
 for any two non-zero natural numbers $a$ and $b$ and any ample $\QQ$-divisor $L$.
  \end{thm}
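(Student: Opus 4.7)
The plan is to reduce well-definedness of $s_X$ to two key ingredients: a Veronese scaling property for $F$-signatures of section rings, and the coincidence of numerical and $\QQ$-linear equivalence on globally $F$-regular varieties.

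For the scaling ingredient, I would invoke the results from \cite{VonKorffthesis} and \cite{CarvajalRojasFiniteTorsors} that for any ample integral Cartier divisor $L$ on $X$ and any positive integer $n$, one has
\[ s(S(X, nL)) = \tfrac{1}{n}\, s(S(X, L)). \]
This is proved by identifying $S(X, nL)$ with the $n$-th Veronese subring of $S(X, L)$ and tracking how the graded free ranks of $F^e_* S(X, L)$ redistribute across Veronese pieces, producing the factor of $\tfrac{1}{n}$ in the limit defining the $F$-signature. Together with the trivial observation that linearly equivalent divisors define isomorphic invertible sheaves, and hence isomorphic graded section rings with equal $F$-signatures, this already handles the scaling identity claimed in the theorem and the consistency of Parts (1) and (2) of \autoref{F-sigfunction} when both apply.

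The crux, and the expected main obstacle, is to show that if integral Cartier divisors $L$ and $L'$ satisfy $L \equiv L'$, then $s(S(X, L)) = s(S(X, L'))$. For this, I would use the fact, developed in Section 3 of the paper, that on a globally $F$-regular projective variety numerical equivalence and $\QQ$-linear equivalence of Cartier divisors coincide — the positive-characteristic analog of the classical statement for log-Fano varieties. Given $L \equiv L'$, this yields a positive integer $n$ with $nL \sim nL'$, hence a graded isomorphism $S(X, nL) \isom S(X, nL')$ and thus $s(S(X, nL)) = s(S(X, nL'))$. Applying the Veronese scaling to both sides cancels the factor of $\tfrac{1}{n}$ and gives $s(S(X, L)) = s(S(X, L'))$, which verifies that Part (1) of \autoref{F-sigfunction} depends only on the class in $N^1_\QQ(X)$.

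The same circle of ideas handles Part (2): if $\tfrac{a}{b}D$ and $\tfrac{a'}{b'}D'$ represent the same rational ample class, then $ab'D \equiv a'bD'$ as integral Cartier divisors, so the previous paragraph gives $s(S(X, ab'D)) = s(S(X, a'bD'))$, and applying the Veronese scaling to each side yields the required equality $\tfrac{b}{a}\, s(S(X, D)) = \tfrac{b'}{a'}\, s(S(X, D'))$. The scaling identity $s_X(\tfrac{a}{b} L) = \tfrac{b}{a}\, s_X(L)$ is then immediate by writing any ample $\QQ$-divisor $L$ as a rational multiple of an integral Cartier divisor and unwinding the definition together with the Veronese scaling.
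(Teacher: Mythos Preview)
Your proposal is correct and follows essentially the same approach as the paper: reduce well-definedness to (i) the Veronese scaling identity $s(S(X,nL)) = \tfrac{1}{n}s(S(X,L))$ from \cite{VonKorffthesis} and \cite{CarvajalRojasFiniteTorsors}, and (ii) the coincidence of numerical and linear equivalence on globally $F$-regular varieties. The only minor difference is that the paper actually proves the slightly stronger statement that a numerically trivial invertible sheaf on such an $X$ is already trivial (not just $\QQ$-trivial), so no passage to a multiple is needed in step (ii); your weaker $\QQ$-linear version together with the scaling identity suffices, as you observe.
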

 \begin{proof}

 To prove the theorem, we need to check that the function $s_X$ as defined in \autoref{F-sigfunction} is well-defined. There are two issues:
 \begin{enumerate}
     \item The first arising from the choice of a $\QQ$-divisor representing a numerical equivalence class (\autoref{numericalequivalence}).
     \item Having chosen a $\QQ$-divisor $L$ representing a numerical class, there is still ambiguity in choosing a representation of $L$ as a rational multiple of an integral Cartier divisor (\autoref{scalingwithoutpairs}).
 \end{enumerate}
 
 We address the first ambiguity by proving that on a globally $F$-regular variety, numerical equivalence and linear equivalence are the same conditions. This is an analog of the same result for \emph{log-Fano} varieties over the complex numbers, a well-known consequence of the \emph{Kawamata-Viehweg} vanishing theorem. The following theorem maybe well-known to experts, but we do not know a reference.
 
 \begin{thm} \label{numericalequivalence}
 Let $X$ be a projective, globally $F$-regular variety over $k$. Suppose $\cL$ is a numerically trivial invertible sheaf on $X$, i.e. $\deg(\cL|_{C}) = 0$ for all curves $C$ on $X$. Then, $\cL$ is isomorphic to the trivial invertible sheaf $\cO_{X}$.
 \end{thm}
 
 \begin{proof}
 First, we note that by \cite[Ch. 2, Section 2, Corollary 1]{Kleimannumericaltheory}, some power $\cL^{m}$ of $\cL$ is algebraically equivalent to $\cO_{X}$ i.e. $\cL^{m}$ deforms to $\cO_{X}$. Since the Euler-characteristic (for sheaf-cohomology) is invariant in flat families \cite[Ch. III, Theorem 9.9]{Hartshorne}, we get that 
 \begin{equation}
     \chi(\cO_{X}) = \chi(\cL^{m})
 \end{equation}
 for some natural number $m$. Now, by \autoref{vanishingforGFR}, since $\cO_{X}$ and $\cL^{m}$ are both nef invertible sheaves, we get that 
 \begin{equation}
     H^{i}(X, \cO_{X}) = H^{i} (X, \cL^{m}) = 0 \quad \text{for all $i >0$.}
 \end{equation}
 Hence, we get
 \begin{equation}
     1 = h^{0}(X, \cO_{X}) = \chi(\cO_{X}) = \chi(\cL^{m}) = h^{0}(X, \cL^{m}) .
 \end{equation}
 Hence, we have shown that $\cL^{m}$ has a non-zero global section. But, since it is also numerically trivial, it must indeed be trivial (since an effective divisor cannot be numerically trivial unless it is the zero divisor). Therefore, $\cL^{m} \isom \cO_{X}$.
 
 Now, by \cite[Ch. 1, Section 1]{Kleimannumericaltheory}, we have that the function $\chi(\cL^{n})$ is a polynomial function of $n$ (as $n$ varies over all integers). Since $\cL^{m} \isom \cO_{X}$, we must have that $\chi(\cL^{n}) = 1$ for all $n \in \ZZ$. But, again, since $\cL^{n}$ is nef for all $n \geq 0$, by \autoref{vanishingforGFR} we have
 \begin{equation}
      h^{0}(X, \cL) = \chi(\cL) = 1.
 \end{equation}
 Hence, $\cL \isom \cO_{X}$ as well because $\cL$ is numerically trivial and has a non-zero global section.
 
 \end{proof}
 
 \begin{rem}
 It was proved in \cite{CarvajalRojasFiniteTorsors} that torsion divisors (i.e., $L$ such that $nL \sim 0$ for some $n$) are themselves linearly equivalent to $0$. Hence, the last part of the proof above follows from this fact, but we include a proof for the convenience of readers.
 \end{rem}
 
 Next, we address the second kind of ambiguity in \autoref{F-sigfunction}. For this, we note the following scaling property for $F$-signature of section rings under taking Veronese subrings, first observed in \cite{VonKorffthesis}.
 
 \begin{thm} \cite[Theorem 2.6.2]{VonKorffthesis}  \label{scalingwithoutpairs}
Let $X$ be a projective variety over $k$ with $\dim (X)$ positive and $\cL$ an ample invertible sheaf on $X$. Let $S(\cL)$ and $S(\cL^{n})$ denote the section rings with respect to $\cL$ and $\cL^{n}$ respectively, where $n$ is any positive natural number. Then, we have the following relation between their $F$-signatures:
\begin{equation}\label{eq.Scalingwithoutpairs}
    s(S(\cL)) = n \, s(S(\cL^{n})).
\end{equation}
\end{thm}

 \end{proof}



 \section{Continuity of the $F$-signature function}
In this section, we prove that the $F$-signature function (\autoref{F-sigfunction}) varies continuously on the ample cone. Throughout, we fix a globally $F$-regular projective variety $X$ over $k$.

  \begin{thm} \label{continuityatrational}
  The $F$-signature function is continuous at each rational class in the ample cone of $X$.
  \end{thm}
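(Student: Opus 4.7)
The plan is to prove continuity at $[L_0] \in \Amp_\QQ(X)$ by establishing the stronger claim that $s_X$ is locally Lipschitz continuous on $\Amp_\QQ(X)$, with a scaling that reflects the homogeneity $s_X(\lambda L) = \lambda^{-1} s_X(L)$. First I would reduce to a comparison between integral Cartier divisors. Given a sequence of rational ample classes $[L_n] \to [L_0]$, I clear denominators by writing $L_n = D_n / N_n$ with $D_n$ integral Cartier; using the scaling identity, $s_X(L_n) = N_n \, s_X(D_n)$. By choosing representatives consistently, the continuity statement reduces to controlling $|s_X(D) - s_X(D')|$ for integral ample Cartier divisors $D, D'$ that are close in $N^1_\RR(X)$, with an estimate that respects the correct degree of homogeneity.

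The key technical input I would develop is a comparison estimate under effective perturbations: for $M$ ample integral Cartier and $E$ effective with $M + E$ still ample, bound $|s_X(M) - s_X(M+E)|$ in terms of intersection-theoretic quantities involving $E$ and a fixed very ample divisor $A$ on $X$. The natural tool is the inclusion of section rings $S(X, M) \hookrightarrow S(X, M+E)$ induced by a defining section of $E$, allowing a degree-by-degree comparison of the graded free summands of $\Fe S(X, M)$ and $\Fe S(X, M+E)$. Each graded free summand $S(-j) \subseteq \Fe S(X, M)$ corresponds to a Frobenius splitting at the variety level compatible with the sheaf $\cO_X((jp^e + i)M)$; twisting by $E$ potentially introduces or destroys such splittings, and the discrepancy in each degree should be controlled by the growth of $h^0(X, \cO_X(jE))$, which is governed asymptotically by the volume of $E$ and more concretely by intersection numbers of the form $A^{d-1} \cdot E$.

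Given such an effective-perturbation estimate, I would sandwich as follows. By \autoref{numericalequivalence}, numerical and $\QQ$-linear equivalence coincide on the globally $F$-regular variety $X$, so any small rational perturbation of $L_0$ in $N^1_\QQ(X)$ can be written (after clearing denominators) as $L_0 + \delta$ with $\delta = E_1 - E_2$ a difference of effective divisors whose sizes are proportional to the perturbation. Applying the perturbation estimate along such a chain (or, more delicately, by interpolating along an ample segment so that all intermediate divisors remain ample) gives a bound on $|s_X(L) - s_X(L_0)|$ that vanishes as $[L] \to [L_0]$, yielding the desired continuity.

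The main obstacle I anticipate is the degree-by-degree comparison of graded free summands in the effective-perturbation step. Unlike the volume function, where section inclusions compare dimensions $h^0$ directly, the $F$-signature counts split summands, which is more subtle because a splitting of $\Fe S(X, M)$ need not extend to a splitting of $\Fe S(X, M+E)$, nor vice versa. I expect the argument to require unwinding the definition of the free rank via the ideal $I_e$ from the preliminaries, identifying graded free summands with Frobenius splittings of sheaves $\Fe \cO_X((jp^e + i) M) \to \cO_X$ twisted by appropriate divisors, and quantifying how many ``new'' splittings can appear upon twisting by $E$. Obtaining a bound that is uniform in the Frobenius exponent $e$ and scales correctly with the divisor sizes is the crux; I expect the final Lipschitz bound to be structurally analogous to the one recorded in \autoref{volumeLipscitz} for the volume function, but with a different homogeneity degree reflecting that $s_X$ scales as $\lambda^{-1}$ rather than $\lambda^{d}$.
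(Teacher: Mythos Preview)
Your proposal correctly identifies the overall architecture of the argument---reduce via scaling to integral divisors, compare section rings under an effective perturbation $M \mapsto M+E$, and seek a Lipschitz-type bound analogous to \autoref{volumeLipscitz}---and you pinpoint exactly the right obstacle: a splitting of $\Fe\cO_X(mM)$ need not survive the twist by $E$, nor conversely, so naive inclusion of section rings cannot directly compare free ranks. But your proposal stops at naming this obstacle; the paper's proof supplies a specific device to overcome it that your sketch does not contain.

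The missing idea is a \emph{sandwich/factorization} trick. Rather than comparing $s_X(M)$ to $s_X(M+E)$ directly, one chooses a rational $\alpha>1$ so that $\alpha M - (M+E)$ is still big, giving a factorization
\[
\Fe\cO_X(mM) \hookrightarrow \Fe\cO_X(m(M+E)) \hookrightarrow \Fe\cO_X(m\alpha M).
\]
This traps the ``intermediate'' splitting count between those for $mM$ and $m\alpha M$. The point is that $s_X(M)$ and $s_X(\alpha M)$ are already related by the scaling identity $s_X(\alpha M) = \alpha^{-1} s_X(M)$, so their difference is a quantity you control \emph{a priori}. What remains are error terms of the form $\dim_k H^0(m\alpha M)/H^0(mM)$ and $\dim_k H^0(m(M+E))/H^0(mM)$, which are genuine volume-difference terms and hence fall to \autoref{volumeLipscitz}. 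Your plan to decompose $\delta = E_1 - E_2$ and chain perturbations would still, at each step, face the unresolved splitting-comparison problem; the sandwich against a \emph{scalar multiple of the same divisor} is what converts the unknown splitting discrepancy into a known scaling discrepancy.

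A second ingredient your sketch omits is an effective bound on which degrees $m$ contribute nontrivially to the free-rank sum $a_e = \sum_m \dim_k H^0(mM)/I_e(mM)$: one needs $I_e(mM) = H^0(mM)$ for $m > C p^e/\|M\|$ with $C$ independent of $M$, so that the sums being compared have uniformly controlled length. Without this, the volume-difference error terms cannot be summed to something of the right order in $p^e$.
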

  
  In fact, much more is true: the $F$-signature function is {\it locally Lipschitz} around any real class in the ample cone $\Amp _\RR (X)$, with respect to any norm chosen on 
  the N\'{e}ron-Severi space. More precisely, we prove:
  
  \begin{thm} \label{mainthm} Fix any norm $\| \, \|$ on the N\'{e}ron-Severi space N$^1 _\RR (X)$ of a projective globally $F$-regular variety $X$. Then for each real class 
   $D \in \Amp_\RR(X) $, there exist positive real numbers $C(D)$ and $r(D)$ (depending only on $D$ and the norm $\| \, \|$), such that for any two ample $\QQ$-divisors $L, \, L^{\prime}$ contained in the ball $B_{r(D)}(D) := \{D^\prime \in \Amp_{\RR}(X) \,| \, \|D-D^\prime\| < r(D) \}$, we have
  \begin{equation} \label{Lipschitzinequality}
      | s_X(L) - s_X(L^{\prime})| \leq C(D) \| L - L^{\prime} \|.
  \end{equation}
  \end{thm}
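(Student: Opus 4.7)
The plan is to bound $|s_X(L) - s_X(L')|$ by a term-by-term comparison of the Frobenius splittings contributing to the $F$-signatures of the section rings $S(X, L)$ and $S(X, L')$, exploiting the Kawamata--Viehweg-type vanishing available on globally $F$-regular varieties to reduce the key counts to polynomial expressions in the degree.

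First I would reduce to the case of integral Cartier divisors using the scaling identity of \autoref{propertiesofs}: clearing a common denominator bounded by a constant depending only on $D$ is absorbed into the Lipschitz constant $C(D)$ and radius $r(D)$. I would choose $r(D)$ small enough that the ball $B_{r(D)}(D)$ lies in a compact subset of $\Amp_\RR(X)$, ensuring in particular that any convex combination $tL + (1-t)L'$ with $t \in [0,1]$ remains ample.

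Next, for an integral ample divisor $L$ with section ring $S$, I would express the graded free rank using the decomposition in \autoref{Fsigofgradedrings}. Each graded $S$-summand of $\Fe S$ corresponds globally to a Frobenius splitting of an appropriate twist of $\cO_X$ by a power of $L$, and grouping these contributions by degree yields a formula
$$a_{e, \text{gr}}(\Fe S) = \sum_{n} \ell_n(L, e),$$
where $\ell_n(L, e)$ is a nonnegative integer measuring independent Frobenius splittings in degree $n$ and the sum is effectively supported in the range $0 \leq n \leq c \, p^e$ for a constant $c$ depending only on $D$. For $L' = L + E$ with $E$ a Cartier divisor of small norm, the difference $|\ell_n(L, e) - \ell_n(L', e)|$ can be controlled by the dimensions of global sections of suitably perturbed line bundles. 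By the vanishing \autoref{vanishingforGFR}, these dimensions agree with Euler characteristics, hence are polynomial in $n$ with coefficients depending linearly on $\|E\|$ in an appropriate sense. Summing these estimates over $n$, dividing by $p^{e(\dim X + 1)}$, and passing to the limit $e \to \infty$ yields the desired Lipschitz bound, with the constant $C(D)$ obtained from the polynomial coefficients together with the bound $c$.

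The main obstacle is precisely the term-by-term comparison above: controlling $|\ell_n(L, e) - \ell_n(L', e)|$ \emph{uniformly in} $e$ with linear dependence on $\|L - L'\|$ is delicate, since naive compactness-type arguments yield only continuity. The quantitative Lipschitz estimate requires a careful analysis of how Frobenius splittings vary across the linear systems $|nL|$ and $|nL'|$, which is where global $F$-regularity --- providing cohomological vanishing for all relevant nef twists --- plays the decisive role. It ensures that the perturbed dimension counts can be replaced by polynomial Euler characteristics that themselves behave Lipschitz-continuously in the divisor class, in direct parallel with the volume case of \autoref{volumeLipscitz}.
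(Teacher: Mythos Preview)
Your proposal has two genuine gaps.

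First, the reduction ``clearing a common denominator bounded by a constant depending only on $D$'' does not work: ample $\QQ$-divisors in any ball around $D$ can have arbitrarily large denominators, so no single bound on denominators is available. The paper handles this by proving the Lipschitz estimate first for \emph{integral} $L$ with an explicit constant $C(L)$, then showing that $C(rL) = O(1/r^2)$ as $r \to \infty$ (\autoref{explicitLipschitz}, \autoref{uniformconstants}); combined with the scaling identity $s_X(rL) = \tfrac{1}{r}s_X(L)$, this yields a uniform constant on compact sets.

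Second, and more seriously, the proposed direct bound on $|\ell_n(L,e) - \ell_n(L',e)|$ does not go through. While $\dim H^0(nL)$ is an Euler characteristic by \autoref{vanishingforGFR}, the subspace $I_e(nL)$ is defined via Frobenius non-splitting and is \emph{not} a cohomological quantity amenable to polynomial or Euler-characteristic control. There is no evident mechanism by which vanishing converts $\dim I_e(nL)$ into something Lipschitz in the divisor class. The paper does not attempt such a direct comparison. Instead, the key idea (\autoref{factorization}, \autoref{maininequalitylem}, \autoref{colon}) is a sandwich argument: writing $L' = L + \tfrac{1}{n}H$ with $H$ big, one chooses $b$ so that $nL - bH$ is big, obtaining inclusions
\[
\Fe\cO_X(mnbL) \hookrightarrow \Fe\cO_X(mb(nL+H)) \hookrightarrow \Fe\cO_X(mn(b+1)L).
\]
This traps $s_X(L')$ between $s_X(L)$ and $s_X\bigl(\tfrac{b+1}{b}L\bigr)$, and the latter difference is \emph{known exactly} from the scaling formula. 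The only remaining terms are genuine differences of $H^0$-dimensions, which \emph{are} controlled by volume differences via \autoref{volumeLipscitz}. Your proposal is missing this detour through a rescaling of $L$ itself, which is precisely what circumvents the intractable direct comparison of $I_e$-spaces.
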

  
  We will say that the $F$-signature function $s$ is locally Lipschitz at a real class $D$ with Lipschitz constant $C(D)$ if the inequality~(\ref{Lipschitzinequality}) is satisfied for all ample $\QQ$-divisors $L, \, L^{\prime}$ that are sufficiently close to $D$.

  As an immediate corollary of \autoref{mainthm}, we obtain:

  \begin{Cor} \label{realcor}
Let $X$ be a projective variety over $k$ with $\dim (X)$ positive. Then, the $F$-signature function $s_X$ extends to a well-defined, continuous, locally Lipschitz function on the real ample cone $\Amp_{\RR}(X)$ of $X$ satisfying the identity:
 $$ s_X(\lambda L) = \frac{1}{\lambda} s_X(L) \quad \text{for all $\lambda \in \RR_{>0}$ and all $L \in \Amp_{\RR}(X)$.} $$
  \end{Cor}
\begin{proof}
    Let $D \in \Amp_\RR(X)$ be a real ample class on $X$. The Lipschitz inequality (\ref{Lipschitzinequality})  implies that for any sequence of ample $\QQ$-divisors $L_{n}$ converging to $D$, the sequence $s_X(L_{n})$ is Cauchy, hence converges to a unique real number. This gives a well-defined extension of $s_X$ to the real ample cone $\Amp_\RR(X)$, that remains locally Lipschitz. Hence, $s$ is continuous on $\Amp_{\RR}(X)$. Finally, the identity $s_X(\lambda L) = \frac{1}{\lambda} s_X(L)$ follows by continuity, since it already holds for all rational $L$ and $\lambda$.
\end{proof}

 \subsection{Informal sketch of the proof of \autoref{mainthm}:}

The proof of \autoref{mainthm} consists of several steps. We summarize the ideas in this subsection.


\begin{itemize}
    \item [Step 1:] First, in \autoref{formulaforFsig}, we prove a formula for calculating the $F$-signature of an ample Cartier divisor $L$, in terms of \emph{Frobenius splittings} of the linear systems $|mL|$ for $m \gg 0$. This gives us a tool to compare $s_X(L)$ and $s_X(L^\prime)$ whenever we have a non-zero map $\cO_{X}(mL) \to \cO_{X}(mL^\prime)$ for $m \gg 0$ (\autoref{inclusion}). 
    
    \item [Step 2:] Given two ample $\QQ$-divisors $L$ and $L^{\prime}$, we first consider the case when $L^{\prime} - L$ is big. Since $L^\prime - L$ is big, for $m \gg 0$, we have $|mL^\prime - mL| \neq \phi $ allowing us to compare $s_X(L)$ and $s_X(L^\prime)$ (\autoref{inclusion}). Further, we may find a constant $\alpha$ such that $\alpha L - L^\prime$ is big as well. This allows us a reverse comparison between $s_X(\alpha L)$ and $s_X(L^\prime)$. (\autoref{factorization}). 
    
    \item [Step 3:]  In this step, we estimate the difference in the $F$-signatures by comparing it to the difference in volumes. Here, we encounter the key difficulty, which is that we don't know the sign of the difference between $s_X(L^{\prime})$ and $s_X(L)$, even if $L^{\prime} - L$ is effective, which we have already assumed. This is overcome by introducing the difference between $s_X(L)$ and $s_X(\alpha L)$ (where $\alpha$ is as in Step 3), along with comparisons to the volume function to estimate the difference between $s_X(L)$ and $s_X(L^{\prime})$. These estimates are the contents of \autoref{maininequalitylem} and \autoref{colon}.
    
    \item [Step 4:] To control the difference in the volumes (from Step 4), we need an additional ingredient: For any $e \geq 1$, we need effective bounds for the degrees $m$ that contribute Frobenius splittings to the $e^\text{th}$ free-rank for $S(X,L)$ and $S(X, L^\prime)$ (\autoref{lem.Biglowerdim}).
    
    \item [Step 5:] The steps so far give us an inequality of the form
    \begin{equation} \label{Lipschitzidea} |s_X(L)-s_X(L^\prime)| \leq C(L) \|L- L^\prime \| \end{equation} for a fixed $L$ and all $L^\prime$ sufficiently close to $L$ and for some constant $C(L)$ depending on $L$ (\autoref{ampledifference}).  One result required here is the (Lipschitz) continuity of the volume function on the ample cone (\autoref{volumeLipscitz}).

    \item [Step 6:] Though (\ref{Lipschitzidea}) proves continuity of $s_X$ at a fixed $\QQ$-divisor $L$, it does not prove that $s_X$ is locally Lipschitz, since the constant $C(L)$ depends on $L$. So, in \autoref{explicitLipschitz} and \autoref{uniformconstants}, we track the constant $C(L)$ and examine the variation with $L$. This involves carefully choosing the scalar $\alpha$ from Step 3.
    
    \item [Step 7:]  As a result, we see that we may pick the constants $C(L)$ such that $C(L) = o(\frac{1}{\|L\|^2})$ as $\|L \| \to \infty$. Now, since $s_X(r L) = \frac{1}{r} s_X(L)$ by \autoref{propertiesofs}, we see that for a $\QQ$-divisor $L$, we may pick $C(L) = r^2 C(rL)$ for any $r \gg 0$. This shows that we may pick uniform Lipschitz constants on compact subsets of the ample cone.

    \item [Step 8:] Given two ample $\QQ$-divisors $L$ and $L^\prime$, we may consider a small perturbation $\lambda L^\prime$ of $L^\prime$ (i.e. $\lambda \approx 1$) so that $\lambda L^\prime - L$ is big (or even ample). Using the transformation rule as in \autoref{propertiesofs}, we may replace $L^\prime$ by $\lambda L^\prime$ and reduce to the case when  $L^\prime - L$ is big, concluding the proof.
\end{itemize}

 
 
 
\subsection{Proof of \autoref{mainthm}} \label{sketchofproof} The rest of this section is dedicated to a detailed proof of \autoref{mainthm}. Note that if $X$ is $0$-dimensional, then the only ample divisor on $X$ is $\cO_X$ and the Theorem is trivially true. Hence, we assume for the rest of this section that $\dim X$ is positive.
 \begin{Not} 
 For any Cartier divisor $D$, we use the notation $H^{0}(D)$ to denote the space of global sections $H^{0}(X, \cO_{X}(D))$.
 \end{Not}
 
\begin{dfn} \label{Iedfn}
      For any Cartier divisor $D$ on $X$, define the $k$-vector subspace $I_{e}(D)$ of $H^{0}(D)$ as follows:
      $$ I_{e} (D) := \{ f \in H^{0}(D)\ | \ \varphi(F^e_*f )= 0 \text{ for all } \varphi \in \Hom_{\cO_X} (F^e_*\cO_X(D), \cO_X) \} .$$

      That is, $I_{e}(D)$ is the set of global sections $f$ of $\cO_{X}(D)$ such the map $\cO_{X} \to \Fe \cO_{X}(D)$ sending $1 \mapsto \Fe f$ does not split. A section $f \in H^{0}(D)$ that is not contained in $I_{e}(D)$ along with a map $\varphi: \Fe \cO_{X}(D)  \to  \cO_{X}$ sending $\Fe f$ to $1$ is called an \emph{$e^\text{th}$-Frobenius splitting} of the linear system $|D|$.
\end{dfn}
\medskip

\begin{lem} \label{formulaforFsig}
Let $L$ be an ample Cartier divisor and $S$ denote the section ring of $X$ with respect to $L$. Then, for any $e \geq 1$, if $a_{e}(L)$ denotes the free-rank of $\Fe S$ as an $S$-module, then $a_{e}(L)$ is computed by the following formula:
\begin{equation} \label{freerankformula}
    a_{e}(L) = \sum _{m = 0} ^{\infty} \dim_{k} \frac{H^{0}(mL)}{I_{e}(mL)}.
\end{equation}
Hence, the $F$-signature of $L$ can be computed as
$$ s_X(L) = \lim _{e \to \infty } \frac{  \sum \limits _{m = 0} ^{\infty} \dim_{k} \frac{H^{0}(mL)}{I_{e}(mL)}}{p^{e(\dim(X)+1)}}$$
\end{lem}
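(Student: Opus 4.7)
The plan is to translate the computation of the free rank $a_e(L)$ into a graded statement about the section ring $S := S(X,L)$, and then read off the graded pieces of the relevant ideal via the sheaf--module correspondence of \autoref{lem.SectionRingVSCoherentSheaf}. Since $X$ is globally $F$-regular, in particular normal, we have $X = \Proj(S)$ and $\cO_X(1) = \cO_X(L)$.

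First I would use \autoref{fvsgf} to identify $a_e(L)$ with the graded free rank of $F^e_* S$ over $S$, and then invoke the length description recalled after \autoref{F-sigdfn} to get $a_e(L) = l_{S_\fm}\bigl(S_\fm/(I_e)_\fm\bigr)$, where $I_e \subseteq S$ is the ideal of elements $x$ for which $1 \mapsto F^e_* x$ fails to split. A direct check shows $I_e$ is graded (any ungraded splitting decomposes into homogeneous components, one of which still witnesses the splitting) and $\fm$-primary (it contains $\fm^{[p^e]}$), so $S/I_e$ has finite length and
\begin{equation*}
a_e(L) \;=\; \dim_k(S/I_e) \;=\; \sum_{m\ge 0}\dim_k H^0(mL)/(I_e)_m.
\end{equation*}

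The heart of the proof is identifying $(I_e)_m$ with $I_e(mL)$ from \autoref{Iedfn}. Writing $m = i + np^e$ with $0\le i < p^e$, the element $F^e_* f$ for $f \in H^0(mL)$ is homogeneous of degree $n$ in the $\NN$-grading on $F^e_* S$, so the splitting of $S \to F^e_* S,\ 1\mapsto F^e_* f$ is equivalent to the splitting of the grading-preserving map $S(-n) \to F^e_* S$ it induces. The key technical input is then the identification of the coherent sheaf on $X$ associated to $F^e_* S$: using the absolute Frobenius, the projection formula, and $F^{e*}\cO_X(nL) = \cO_X(np^e L)$, this sheaf decomposes as $\bigoplus_{j=0}^{p^e-1} F^e_* \cO_X(jL)$. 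Applying \autoref{lem.SectionRingVSCoherentSheaf}(3) yields
\begin{equation*}
\Hom_S^{\mathrm{gr}}\bigl(F^e_* S,\ S(-n)\bigr) \;\cong\; \bigoplus_{j=0}^{p^e-1}\Hom_{\cO_X}\bigl(F^e_*\cO_X(jL),\ \cO_X(-nL)\bigr),
\end{equation*}
and after twisting each summand by $\cO_X(nL)$ and re-applying the projection formula the right-hand side becomes $\bigoplus_j \Hom_{\cO_X}(F^e_*\cO_X((j+np^e)L),\cO_X)$.

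Under this identification, the condition $\varphi(F^e_* f) = 1 \in S(-n)_n = k$ only constrains the $j = i$ summand (the others can be taken to be zero), and reduces exactly to the existence of $\psi \in \Hom_{\cO_X}(F^e_*\cO_X(mL),\cO_X)$ with $\psi(F^e_* f) = 1$, i.e.\ to $f \notin I_e(mL)$. This gives $(I_e)_m = I_e(mL)$, whence the stated formula for $a_e(L)$; dividing by $p^{e\dim S} = p^{e(\dim X + 1)}$ and passing to the limit yields the $F$-signature expression. I expect the main obstacle to be purely bookkeeping: carefully tracking the grading shifts coming from $m = i + np^e$ and lining them up with the tensor--twist gymnastics in the projection formula, rather than any deeper difficulty.
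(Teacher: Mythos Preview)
Your approach is correct and arrives at the same conclusion, but the packaging differs from the paper's. The paper avoids the ring-theoretic length formula $a_e = \ell(S/I_e)$ entirely: it directly counts graded free summands of $F^e_* S$ by sheafifying the decomposition $F^e_* S = \bigoplus_{n=0}^{p^e-1} M_{e,n}$ (each $M_{e,n}$ being the section module of $F^e_* \cL^n$), so that an $S(-j)$-summand becomes a $\cL^{-j}$-summand of $F^e_* \cL^n$, and then twists by $\cL^j$ via the projection formula to convert this into an $\cO_X$-summand of $F^e_* \cL^{n+jp^e}$, whose count is $\dim_k H^0(mL)/I_e(mL)$ by definition. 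Your route through the graded ideal $I_e \subseteq S$ and the identification $(I_e)_m = I_e(mL)$ is equivalent but requires the extra checks that the ring-theoretic $I_e$ is a graded, $\fm$-primary ideal and that its localization agrees with the local non-splitting ideal; the paper's direct summand count sidesteps these. Both arguments rest on the same core input (the decomposition of $F^e_* S$, the projection formula, and \autoref{lem.SectionRingVSCoherentSheaf}), so the difference is organizational rather than substantive.
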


\begin{proof}
Let $\cL$ denote the invertible sheaf $\cO_{X}(L)$. We note that the $S$-module $\Fe S$ naturally decomposes as an $\NN$-graded module as (see the discussion preceding \autoref{Fsigofgradedrings}):
$$ \Fe S =  \bigoplus   _{n =0} ^{p^{e}-1} M_{e,n},      $$
where $M_{e,n} :=  \bigoplus _{i \geq 0} H^{0}(X, \cL^{i} \otimes \Fe\cL^{n})$ is naturally an $\NN$-graded $S$-module. Note also that since $H^{0}(X, \cL^{n+ip^e}) = 0$ for $i <0$ and $0 \leq n \leq p^e -1$, the module $M_{e,n}$ is the section module of the sheaf $\Fe \cL^n$ with respect to $\cL$.
We recall that by \autoref{fvsgf}, $a_{e}(L)$ can be calculated as the graded free-rank of $\Fe S$ i.e.
\begin{equation*} 
    \begin{split} 
    a_{e}(L) = \max \{r \, | \, & \Fe  S \isom \bigoplus _{t=1} ^{r} S(-j_{t}) \, \bigoplus \, N \, \text{as graded $S$-modules} \\
    & \text{for some $j_t \in \ZZ$  and some graded $S$-module $N$} \}     .
    \end{split} 
\end{equation*}

Since $\Fe S$ is $\NN$-graded, we note that each integer $j_{t}$ occurring in any decomposition of $\Fe S$ as above is non-negative. Sheaf theoretically, we have an equivalent description (see \cite[Theorem 3.10]{SmithGloballyFRegular} and the proof):
\begin{equation}   
\begin{split}
    a_{e}(L) =  \max \{r \, | \, &  \widetilde{\Fe S} \isom \bigoplus  _{0 \leq n \leq p^e -1} \Fe \cL^n   \isom \bigoplus _{t=1} ^{r} \cL^{-j_{t}} \bigoplus \,  \cN \\
    & \text{as $\cO_{X}$-modules for some $j_t \in \NN$  and some sheaf $\cN$} \} \end{split}   \end{equation}
 Now, for any $0 \leq n \leq p^{e}-1$, and $j \geq 0$, the maximum number of $\cL^{-j}$ summands of $\Fe \cL^{n}$ is the same as the maximum number of $\cO_{X}$-summands of $\Fe\cL^{n+jp^{e}}$. Writing $\Fe \cL^n \isom \cO_X ^{\oplus n} \oplus \cG$ such that $\cG$ does not have any $\cO_X$-summands, we see that the set $I_e (\cL^n)$ can be identified with the set $H^0 (X, \cG)$. Hence, the maximum number of $\cO_{X}$-summands of any $\Fe\cL^{m}$ is exactly given by the dimension of $H^{0}(mL)/I_{e}(mL)$ (see \autoref{lem.directsummand}, part (b)). Using \autoref{lem.directsummand} again, running over all $0 \leq n \leq p^{e}-1$ and $j \geq 0$, we get the desired formula~(\ref{freerankformula}) for $a_{e}(L)$. 
\end{proof}

\begin{rem}
Since the free-rank of $\Fe S$ is bounded by its generic rank (which is exactly $p^{e(\dim(X)+1)}$), the sum in equation~(\ref{freerankformula}) is indeed finite. Next, we will find uniform bounds for the number of terms in this sum.
\end{rem}

\begin{thm}\label{lem.Biglowerdim}
 Let $X$ be a globally $F$-regular projective variety over $k$. Fix a norm $\| \, \|$ on the N\'eron-Severi space $N^1 _\RR (X)$. There exists a constant $C_{1}:=C_{1}(X)$ (depending only on $X$, and the norm $\| \, \|$) such that, whenever $L$ and $H$ are any two effective Cartier divisors on $X$, we have:
\begin{enumerate}
    \item $$ I_{e}(mL) = H^{0}(mL) \textrm{ for $m > \frac{C_{1}}{\|L\|} p^{e}$}, \quad \text{and,}$$
    \item For all $n > \frac{2 \|H\|}{\|L\|}$, $$ I_{e} (m(nL+H)) = H^{0}(m(nL+H)) \textrm{ for all $m > \frac{C_{1}p^{e}}{n \|L\|}$}. $$
\end{enumerate}

\end{thm}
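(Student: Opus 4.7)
The plan is to reduce both parts to a single dual statement via Grothendieck duality: a section outside $I_e(D)$ can exist only when the class $(p^e-1)(-K_X) - D$ is pseudoeffective. Since $X$ is globally $F$-regular, it is of log-Fano type by \cite[Theorem 4.3]{SchwedeSmithLogFanoVsGloballyFRegular}, so there is an effective $\Delta \geq 0$ with $-K_X - \Delta$ ample, whence $-K_X$ is big. Pairing with a suitable positive linear functional on the pseudoeffective cone then yields the quantitative bound on $D$.

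Concretely, first I would observe that if $I_e(D) \neq H^0(X, \cO_X(D))$, then by definition $\Hom_{\cO_X}(F^e_* \cO_X(D), \cO_X) \neq 0$. Since $X$ is normal and $\QQ$-Gorenstein, Grothendieck duality for the $F$-finite Frobenius gives
$$\Hom_{\cO_X}(F^e_* \cO_X(D), \cO_X) \cong H^0(X, \cO_X((1-p^e)K_X - D)),$$
so $(p^e-1)(-K_X) - D$ is linearly equivalent to an effective Weil divisor; clearing denominators via the $\QQ$-Cartier property of $K_X$, its class in $N^1_\QQ(X)$ lies in the pseudoeffective cone $\overline{\mathrm{Eff}}(X)$.

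Next I would fix once and for all a linear functional $\phi : N^1_\RR(X) \to \RR$ and a constant $c > 0$ with $\phi(\alpha) \geq c\|\alpha\|$ for every pseudoeffective class $\alpha$. Such a $\phi$ exists because $\overline{\mathrm{Eff}}(X)$ is a closed, convex, salient cone (projectivity of $X$ forces $\overline{\mathrm{Eff}} \cap (-\overline{\mathrm{Eff}}) = \{0\}$ in $N^1_\RR$), so a Hahn--Banach argument produces a strictly separating hyperplane; a concrete choice is $\phi(\cdot) = H_0^{d-1} \cdot (\cdot)$ for a suitable ample class $H_0$, and compactness of the unit slice $\overline{\mathrm{Eff}}(X) \cap \{\|\alpha\|=1\}$ delivers $c$. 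Since $-K_X$ is big, $M := \phi(-K_X)$ is a positive real depending only on $X$; set $C_1 := M/c$. Now applying $\phi$ to $D = mL$ in the reduction above gives $m\phi(L) \leq (p^e - 1) M$, and using $\phi(L) \geq c\|L\|$ yields $m \leq C_1 p^e/\|L\|$, which is the contrapositive of part~(1). For $D = m(nL+H)$, the same procedure produces $m(n\phi(L) + \phi(H)) \leq (p^e-1)M$; since $H$ is effective, $\phi(H) \geq 0$, and discarding this nonnegative term delivers $m \leq C_1 p^e/(n\|L\|)$ directly, proving part~(2). (The hypothesis $n > 2\|H\|/\|L\|$ is only strictly needed if one prefers to derive (2) from (1) applied to the effective divisor $nL + H$, absorbing the factor from $\|nL+H\| \geq n\|L\|/2$ into the constant.)

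The main obstacle is constructing the universal linear functional $\phi$: one must use salience of the pseudoeffective cone on a projective variety, the resulting separating hyperplane, and finiteness and positivity of $\phi(-K_X)$; verifying Grothendieck duality carefully in the $\QQ$-Gorenstein (non-Gorenstein) setting and checking that the effective Weil divisor representative defines a pseudoeffective class in $N^1_\RR(X)$ are the other technical points, but routine once the duality identification is in hand.
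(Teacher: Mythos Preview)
Your approach is essentially the paper's: reduce via Frobenius duality to showing $(p^e-1)(-K_X) - D$ has no effective representative once $\|D\|$ is large, then exploit salience of the pseudoeffective cone. The paper packages the latter through the compact set $\kappa = \overline{\mathrm{Eff}}(X) \cap \big([A] - \overline{\mathrm{Eff}}(X)\big)$ for an ample Cartier $A$ with $A + K_X$ effective, taking $C_1'$ larger than $\max\{\|\xi\| : \xi \in \kappa\}$; your separating linear functional $\phi$ is the dual formulation of the same convexity fact and is arguably cleaner. Your remark that part~(b) follows without the hypothesis $n > 2\|H\|/\|L\|$ (by dropping the nonnegative term $\phi(H)$) is correct and a mild sharpening; the paper instead deduces (b) from (a) via $\|nL+H\| \geq \tfrac{1}{2}n\|L\|$ and doubles the constant.

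There is one technical slip. You invoke ``the $\QQ$-Cartier property of $K_X$'' to place $(p^e-1)(-K_X) - D$ in $N^1_\QQ(X)$, but a globally $F$-regular projective variety need not be $\QQ$-Gorenstein: the log-Fano statement only guarantees that $K_X + \Delta$ is $\QQ$-Cartier for some boundary $\Delta$, not $K_X$ itself. The paper avoids this by fixing an ample \emph{Cartier} divisor $A$ with $A + K_X \geq 0$ as Weil divisors. If $(p^e-1)(-K_X) - D$ is effective (Weil), then so is the genuinely Cartier divisor $(p^e-1)A - D$, whose class lies in $\overline{\mathrm{Eff}}(X)$; you may then apply $\phi$ to this class and set $M = \phi(A)$. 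With that substitution your argument goes through verbatim.
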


\begin{proof}
Since $X$ is normal, we can consider the canonical (Weil) divisor on $X$ (denoted by  $K_X$), by extending the canonical divisor on the non-singular locus of $X$. Choosing an ample divisor $A$ such that $A+ K_{X}$ is effective, we may write
$A \sim -K_{X} + E$ for some effective (Weil) divisor $E$. Let $[A]$ denote the class of $A$ in the ample cone of $X$. 

Let $L$ be any effective Cartier divisor on $X$. By applying duality for the Frobenius map, we have, 
$$\mathscr{H}om _{\cO_X}(F^e_*\cO_X(mL), \cO_X) \cong F^e_*\cO_X( -(p^e-1)K_X -mL).$$
See \cite[Section 4.1]{SchwedeSmithLogFanoVsGloballyFRegular} for a detailed discussion regarding duality for the Frobenius map. Hence, we have,
\begin{equation} \label{dualcriterion} \Hom _{\cO_X}(F^e_*\cO_X(mL), \cO_X) \isom \Fe H^{0}(X, \cO_{X}(-(p^e-1)K_{X} - mL)). \end{equation}
This shows that to prove that $H^{0}(mL) = I_{e}(mL)$ for any given $m$, it suffices to show the right hand side in \autoref{dualcriterion} is zero.

\vskip 8pt
{\it Claim: } There exists a positive constant $C_1 ^{\prime}$ (depending only on $X$, the choice of $A$ and the norm $\| \, \|$), such that for any effective divisor $D$ with $\|D\| > C_1 ^{\prime}$, we have $-K_X - D$ is not an effective divisor, i.e., $-K_X - D$ is not $\RR$-linearly equivalent to any effective divisor.

\vskip 8pt

\begin{proof}
[Proof of the claim]
Recall that the pseudoeffective cone (denoted by $\overline{\text{Eff}}(X)$) is a closed strongly convex cone (i.e. there is no non-zero class $\nu \in \overline{\text{Eff}}(X)$ such that $-\nu \in \overline{\text{Eff}}(X)$), and contains the class of every effective divisor on $X$ \cite[Definition 2.2.25]{LazarsfeldPositivity1}. Thus, the set 
$$ \kappa := \overline{\text{Eff}} (X) \bigcap ([A] -  \overline{\text{Eff}} (X))$$
is a compact subset of $\overline{\text{Eff}} (X)$. Since the norm function achieves a maximum on $\kappa$, we may choose $C_{1}^{\prime}$ to be bigger than the norm of any class in $\kappa$:
$$ C_{1} ^{\prime} > \max \{\|\xi\| \, | \, \xi \in \kappa \}.$$
Note that $C_{1}^{\prime}$ depends only on the choice of $A$ and the norm $\| \, \|$.

Since the class of every effective Cartier divisor on $X$ is contained in the pseudoeffective cone of $X$, if $D$ is an effective divisor with $\|D\| > C_{1} ^{\prime}$, then $D$ can not belong to $\kappa$ by the definition of $C_{1}^{\prime}$. Hence, we see that $A-D$ is not effective. Since $A = -K_X + E$ for an effective divisor $E$, this means that $-K_X - D$ is not effective. This proves the claim.
\end{proof}

{ \it Continuation of the proof of \autoref{lem.Biglowerdim}:} For any effective Cartier divisor $L$, if $m > \frac{C_{1}^{\prime} p^e}{\|L\| }$, we have $\|\frac{m}{p^e-1}L\| > C_{1} ^{\prime}$, hence, applying the claim above, we conclude that $-K_X - \frac{m}{p^e-1}L$ is not effective. Therefore, the divisor
$$ -(p^e-1) K_X - mL$$
 is not effective. By (\ref{dualcriterion}), this gives us $H^{0}(mL) = I_{e}(mL)$ as required. This proves part (a).

 For part (b), we use part (a) of the Theorem by replacing $L$ by $nL +H$, which gives us that $H^{0}(m(nL+H)) = I_{e}(m(nL+H))$ for $m> \frac{C_{1}^{\prime} p^e}{\|nL+H\|}$.
 Since by assumption $\| H \| \leq \frac{1}{2}\| nL\|$, we have, $\|nL +H \|\geq \|nL\| -\|H\| \geq \frac{1}{2}\|nL\|$. Therefore, $$\frac{2C_{1}^{\prime} p^e}{n\|L\|}\geq\frac{C_{1}^{\prime} p^e}{\| nL+H\|},$$ using which we see that $C_{1} = 2 C_{1}^{\prime}$ works for both parts (a) and (b). This completes the proof of \autoref{lem.Biglowerdim}.
\end{proof}

\begin{rem}
    For a more effective, but less uniform version of \autoref{lem.Biglowerdim}, see \autoref{lem.lowerdim}.
\end{rem}

Next, we prove \autoref{mainthm} in the special case when the divisor $L$ is fixed and the difference $L^{\prime} - L$ is big.

  \begin{lem}[Key Lemma] \label{ampledifference}
  Let $L$ be an integral ample divisor on $X$. Then, there exists a constant $C(L)$ (depending only on $L$ and the norm $\| \, \| $) such that for any other ample $\QQ$-divisor $L^{\prime}$ sufficiently close to $L$, and for which $L^{\prime}-L$ is big, we have:
  $$ |s_X(L) - s_X(L^{\prime})| \leq C(L) \|L - L^{\prime} \|.        $$
  \end{lem}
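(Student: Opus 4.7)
The plan follows the blueprint in \autoref{sketchofproof}. By \autoref{formulaforFsig},
\[
s_X(L) = \lim_{e \to \infty} \frac{a_e(L)}{p^{e(d+1)}}, \qquad a_e(L) = \sum_{m \geq 0} \dim_k \frac{H^0(mL)}{I_e(mL)}, \quad d := \dim X,
\]
and by \autoref{lem.Biglowerdim}(a) the summation is supported on $0 \leq m \leq C_1 p^e / \|L\|$, with $C_1$ uniform for $L'$ in a small neighborhood of $L$. The task thus reduces to an effective comparison of $a_e(L)$ with $a_e(L')$ and a controlled passage to the limit $e \to \infty$.

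The comparison mechanism uses that $L' - L$ is big: there is a fixed integer $m_0$ and a nonzero $s \in H^0(m_0(L' - L))$. Multiplication by $s$ gives a nonzero sheaf map $\cO_X(mL) \to \cO_X(mL' + m_0(L'-L))$, and a direct check shows that this map sends $I_e(mL)$ into the corresponding $I_e$ of the target: any splitting $\phi'$ of $F^e_* \cO_X(mL' + m_0(L'-L))$ composes with $F^e_*(\cdot s)$ to give a splitting on the source, so any section killed by every splitting on the source is carried to a section killed by every splitting on the target. Summing the resulting dimensional inequalities over $m$, dividing by $p^{e(d+1)}$, and passing to the limit yields a one-sided bound on $s_X(L) - s_X(L')$ up to an error concentrated near the boundary $m \sim C_1 p^e / \|L\|$. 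The principal obstacle is precisely that this comparison is only one-sided: even when $L' - L$ is effective, nothing a priori fixes the sign of $s_X(L) - s_X(L')$.

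To produce the reverse inequality I will pick a rational $\alpha = 1 + O(\|L' - L\|/\|L\|)$ just large enough that $\alpha L - L'$ is big, and apply the same one-sided comparison to the pair $(L', \alpha L)$. Combining the two comparisons with the scaling identity $s_X(\alpha L) = \alpha^{-1} s_X(L)$ from \autoref{propertiesofs} gives
\[
|s_X(L) - s_X(L')| \ \leq \ (1 - \alpha^{-1})\, s_X(L) + (\text{boundary error}),
\]
and the leading term is already $O(\|L - L'\|)$ by the choice of $\alpha$. For the boundary error, after truncation via \autoref{lem.Biglowerdim} and division by $p^{e(d+1)}$, the differences of the partial sums $\sum_m \dim_k H^0(mL)$ and $\sum_m \dim_k H^0(mL')$ become Riemann approximations of $\tfrac{1}{d!}\int_0^{C_1 / \|L\|} |\vol(tL) - \vol(tL')|\, dt$, which is bounded by a multiple of $\|L - L'\|$ via the volume Lipschitz estimate \autoref{volumeLipscitz} applied on a fixed compact subset of $\Amp_\RR(X)$. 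This produces the required constant $C(L)$, depending only on $L$ through $\|L\|$, $s_X(L)$, and the volume Lipschitz constant near $L$. The main technical difficulty I anticipate is making the passage from the termwise one-sided comparison to the Riemann sum / volume estimate fully uniform in $L'$ as $e \to \infty$, and calibrating $\alpha$ precisely enough to yield a genuine Lipschitz bound rather than just continuity.
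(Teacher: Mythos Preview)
Your overall architecture is the paper's own: sandwich $L \to L' \to \alpha L$ (the paper writes $L' = L + \tfrac1n H$ and takes $\alpha = \tfrac{b+1}{b}$ with $b$ chosen so that $nL - bH$ is big), then exploit the scaling $s_X(\alpha L) = \alpha^{-1}s_X(L)$ together with volume estimates. So at the level of strategy your plan matches \autoref{sketchofproof} closely.

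However, your description of the ``one-sided comparison'' is where the argument is not yet a proof. The inclusion $\phi(I_e(D_1)) \subset I_e(D_1+D_2)$ from \autoref{inclusion} does \emph{not} give a dimensional inequality between $\dim H^0(D_1)/I_e(D_1)$ and $\dim H^0(D_1+D_2)/I_e(D_1+D_2)$ in either direction. What it gives is the exact identity
\[
\dim \frac{H^0(mL')}{I_e(mL')} \;=\; \dim \frac{H^0(mL)}{J_e(mL)} \;+\; \dim \frac{H^0(mL')}{H^0(mL)+I_e(mL')},
\]
where $J_e(mL) := H^0(mL)\cap I_e(mL') \supset I_e(mL)$. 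The second summand is bounded by the volume difference you describe, but the discrepancy $\dim J_e(mL)/I_e(mL)$ is present for \emph{every} $m$, not just near the boundary $m \sim C_1 p^e/\|L\|$. So your sentence ``yields a one-sided bound on $s_X(L)-s_X(L')$ up to an error concentrated near the boundary'' is not correct as stated.

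The real role of the second comparison $L' \to \alpha L$ is therefore not to ``produce the reverse inequality'' but to control this kernel term: the factorization $H^0(mL) \to H^0(mL') \to H^0(m\alpha L)$ gives the chain $I_e(mL) \subset J_e(mL) \subset \{x : cx \in I_e(m\alpha L)\}$, and bounding $\dim J_e/I_e$ through this chain is the content of \autoref{colon}. When you unwind it, the bound picks up an extra term of size roughly $\sum_m \dim H^0(m\alpha L)/cH^0(mL)$, which after the limit contributes $\mathrm{vol}(L)\cdot \tfrac{(b+1)^d - b^d}{b^d}$ (term (A) in \autoref{maininequalitylem}). Your displayed bound $|s_X(L)-s_X(L')| \le (1-\alpha^{-1})s_X(L) + (\text{volume error})$ is missing this term; it is still $O(\|L-L'\|)$, so the Lipschitz conclusion survives, but the derivation you sketch does not produce it and the step cannot be skipped. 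A minor point: your map ``$\cO_X(mL)\to\cO_X(mL'+m_0(L'-L))$'' should land in $\cO_X(mL + m_0(L'-L))$; to compare with $H^0(mL')$ you need a section of $m(L'-L)$ for each large $m$, which is how the paper proceeds (writing $L'-L = \tfrac1n H$ with $H$ effective and big).
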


\begin{proof}[Proof of \autoref{ampledifference}] Throughout the proof, we fix the following set-up: Fixing the ample, integral divisor $L$ on $X$, we pick an arbitrary ample $\QQ$-divisor $L^{\prime}$ such that $L^{\prime} - L$ is big. Then, we may write $L^{\prime} = L + \frac{1}{n} H$, for some $n \gg 0$ and an effective and big Cartier divisor $H$.
\medskip

\begin{lem} \label{inclusion}
For effective divisors $D_1$ and $D_2$, consider the natural inclusion:
$$ \phi: \Fe \cO_X(D_1) \subset \Fe \cO_X(D_1 + D_2).     $$
Then, 
\begin{equation}
    \phi(I_{e}(D_1)) \subset I_{e}(D_1 + D_2).
\end{equation}
Equivalently, viewing $H^{0}(X, \cO_X(D_1))$ as a subset of $H^{0}(X, \cO_X(D_1 + D_2))$ through the map $\phi$, we have:
\begin{equation}
    \phi(I_{e}(D_1)) \subset I_{e}(D_1 + D_2) \cap H^{0}(D_1) = \{ x \in H^{0}(D_1) | \phi(x) \in I_{e}(D_{1} + D_{2}) \}.
\end{equation}
\end{lem}

\begin{proof}
This follows from the definitions once we observe that for every map $\varphi$ in $\Hom_{\cO_{X}}(\Fe \cO_{X}(D_1 + D_2), \cO_{X})$, we get a map $\Tilde{\varphi}$ in $\Hom_{\cO_{X}}(\Fe \cO_{X}(D_1), \cO_{X})$ by pre-composing with the map $ \phi$.
$$\begin{tikzcd}
    \Fe\cO_{X}(D_1) \arrow[dr, "\Tilde{\varphi}"] \arrow[r, "\phi"] &  \Fe\cO_{X}(D_1 + D_2) \arrow[d, "\varphi"] \\
    & \cO_{X}
\end{tikzcd}      $$
\end{proof}

\begin{lem} \label{factorization}
With $L$ an ample Cartier divisor and $H$ an effective big divisor on $X$,  and any natural number $n$, suppose that we have a natural number $b:=b(n)$, such that $nL-bH$ is big. Consequently, by \cite[Corollary 2.2.10]{LazarsfeldPositivity1}, there is a $C_{2} \gg 0$ such that for all $m \geq C_{2}$, we have
$$H^{0}(m(nL-bH)) \neq 0 .$$
Then, for $m \geq C_{2}$ and all $e \geq 1$, there is a factorization of inclusions:
$$
\begin{tikzcd}[row sep=large, column sep=large]
    \Fe\cO_{X}(mnbL) \arrow[dr, "\cdot \Fe c"] \arrow[r, " \cdot \Fe mb H"] &  \Fe\cO_{X}(mb(nL+H)) \arrow[d, "\cdot \Fe d"] \\
    & \Fe \cO_{X}(mn(b+1)L)
\end{tikzcd}
$$
given by a choice of a section $d \in H^{0}(X, \cO_X(mnL -mbH))$.
\end{lem}

\begin{proof}
Given a section $d \in H^{0}(X, \cO_X(mnL -mbH))$, let $D_{1}$ be the corresponding effective divisor. Then, we have $D_2 = mbH + D_{1} \sim mbH + mnL - mbH = mnL$. Then, we get inclusions
$$ \begin{tikzcd}[row sep=large, column sep=large]
    \cO_{X}(mnbL) \arrow[dr] \arrow[r] &  \cO_{X}(mb(nL+H)) \arrow[d] \\
    & \cO_{X}(mnbL + D_{2})
\end{tikzcd} $$
since $D_1$ was effective. We get the required factorization by applying $\Fe$ to the above diagram and taking $c$ to be the section corresponding to the divisor $D_{2}$.
\end{proof}

\begin{lem} \label{maininequalitylem}
Let $d=\dim X$ and let $C_1 = C_1(X)$ be the constant as obtained in \autoref{lem.Biglowerdim}. Fix an ample Cartier divisor $L$ and an effective Cartier divisor $H$ on $X$. Let $n$ and $b:=b(n)$ be positive integers such that  $n > \frac{2 \|H\|}{\|L\|}$ and that $nL-bH$ is big. Then, we have the following inequality:


\begin{equation} \label{maininequality}
    \begin{split}
     |s_X(L) - s_X(L + \frac{1}{n} H)|  
       \leq & \quad  \frac{C_{1}^{d+1}}{\|L\|^{d+1} (d+1)!} \, \Bigg(  2  \, \vol(L) \, \frac{\big( (b+1)^{d} - b^{d} \big )}{b^{d}} \, \\
       & + \, \big( \vol(L + \frac{1}{n}H) -\vol(L) \big) \Bigg) + 2 \,  \frac{s_X(L)}{b+1} 
    \end{split}
\end{equation}
\end{lem}


\begin{proof}

First, fixing $n$ and $b$, there is a $C_{2} \gg 0$ such that $H^{0}(m(nL-bH)) \neq 0$ for all $m \geq C_{2}$. Using \autoref{formulaforFsig} and \autoref{lem.Biglowerdim}, we have the following formulas for the $F$-signatures:
\begin{equation} \label{formulaforFsig1} s_X(nbL) = \lim_{e \to \infty} \frac{1}{p^{e(d+1)}}\sum \limits _{m = C_{2}} ^{\frac{C_{1}p^e}{\|L\|nb}}\dim_{k}\frac{  H^{0}(mnbL)}{I_{e}(mnbL)} \end{equation}
and similarly,
\begin{equation} \label{formulaforFsig2} s_X(b(nL+H)) =
\lim_{e \to \infty} \frac{1}{p^{e(d+1)}}\sum \limits _{m = C_{2}} ^{\frac{C_{1}p^e}{\|L\|nb}}\dim_{k}\frac{  H^{0}(mb(nL+H))}{I_{e}(mb(nL+H))}.\end{equation}

Note that even though the formula from \autoref{formulaforFsig} requires us to begin the sums (\ref{formulaforFsig1}) and (\ref{formulaforFsig2}) at $m =0$, we may begin the sums at $C_{2}$ since changing finitely many terms does not alter the limit.

According to formulas (\ref{formulaforFsig1}) and (\ref{formulaforFsig2}), to compare $s_X(nbL)$ with $s_X(b(nL +H))$, we need to understand the difference
$$ \dim_k \frac{H^0(mbnL)}{I_e(mbnL)} - \dim_k \frac{H^0 (mb(nL+H))}{I_e(mb(nL+H))} .$$


We have an inclusion
\begin{equation} \label{comparisonmaps}
\frac{H^0(mbnL)}{H^0(mbnL)\cap I_e(mb(nL+H))} \hookrightarrow \frac{H^0 (mb(nL+H))}{I_e(mb(nL+H))} \end{equation}
coming from the inclusion of $H^{0}(mbnL) \hookrightarrow H^{0}(mb(nL+H))$.
\medskip

Let $J_e(mbnL) = H^0(mbnL)\cap I_e(mb(nL+H))$. Then using (\ref{comparisonmaps}), we have:
\begin{equation} \label{dimensioncomparison}
    \dim_{k} \frac{H^0(mb(nL+H))}{I_e(mb(nL+H))} = \dim_k \frac{H^0(mbnL)}{J_e(mbnL)} + \dim_{k} \frac{H^0(mb(nL+H))}{H^{0}(mnbL) + I_{e}(mb(nL+H))}.
\end{equation}

Then, using (\ref{dimensioncomparison}) and the triangle inequality, we get

\begin{equation} \label{intermediateinequlaity}
    \begin{aligned}
    &   \left| \sum_{m=C_{2}}^{\frac{C_{1}p^e}{\|L\|nb}}\dim_k\frac{H^0(mbnL)}{I_e(mbnL)} - \sum_{m=C_{2}}^{\frac{C_{1}p^e}{\|L\|nb}}\dim_k \frac{H^0(mb(nL+H))}{I_e(mb(nL+H))} \right| \\
    &   \left| \sum_{m=C_{2}}^{\frac{C_{1}p^e}{\|L\|nb}}\dim_k\frac{H^0(mbnL)}{I_e(mbnL)} - \sum_{m=C_{2}}^{\frac{C_{1}p^e}{\|L\|nb}} \Big( \dim_{k} \frac{H^0(mbnL)}{J_e(mbnL)} + \dim_{k} \frac{H^0(mb(nL+H))}{H^{0}(mnbL) + I_{e}(mb(nL+H))} \Big) \right|  \\
    \leq &    \left| \sum_{m=C_{2}}^{\frac{C_{1}p^e}{\|L\|nb}}\dim_k\frac{H^0(mbnL)}{I_e(mbnL)} -\sum_{m=C_{2}}^{\frac{C_{1}p^e}{\|L\|nb}}\dim_k \frac{H^0(mbnL)}{J_e(mbnL)}  \right| 
    +  \sum_{m=C_{2}}^{\frac{C_{1}p^e}{\|L\|nb}}\dim_{k} \frac{H^0(mb(nL+H))}{H^{0}(mnbL)}  \\
    \leq &    \left|\sum_{m=C_{2}}^{\frac{C_{1}p^e}{\|L\|nb}} \dim_k\frac{H^0(mbnL)}{I_e(mbnL)} -\sum_{m=C_{2}}^{\frac{C_{1}p^e}{\|L\|nb}}\dim_k \frac{H^0(m(b+1)nL)}{I_e(m(b+1)nL)}  \right| \\
    + &    \left|\sum_{m=C_{2}}^{\frac{C_{1}p^e}{\|L\|nb}} \dim_k\frac{H^0(m(b+1)nL)}{I_e(m(b+1)nL)} -\sum_{m=C_{2}}^{\frac{C_{1}p^e}{\|L\|nb}}\dim_k \frac{H^0(mbnL)}{J_e(mbnL)}  \right| 
     +   \sum_{m=C_{2}}^{\frac{C_{1}p^e}{\|L\|nb}}\dim_{k} \frac{H^0(mb(nL+H))}{H^{0}(mnbL)} 
   \end{aligned}
\end{equation}
where in the last inequality, we use the triangle inequality again after adding and subtracting the term $\sum \limits _{m=C_{2}}^{\frac{C_{1}p^e}{\|L\|nb}} \dim_k\frac{H^0(m(b+1)nL)}{I_e(m(b+1)nL)}$.

\medskip

To proceed, we need to understand the difference between the spaces $\frac{H^0(m(b+1)nL)}{I_e(m(b+1)nL)}$ and  $\frac{H^0(mbnL)}{J_e(mbnL)}$. To this end, we prove the following:

\begin{lem} \label{colon}
Suppose, as in \autoref{factorization}, $b$ is such that $nL-bH$ is big and $C_{2}$ is such that for all $m \geq C_{2}$, we have $H^{0}(m(nL-bH)) \neq 0 $. Then, for $m \geq C_{2}$ and all $e \geq 1$, choosing a non-zero global section $d \in H^{0}(mnL -mbH)$ and setting $c = d \otimes h^{mb}$, where $h$ is the section of $\cO_{X}(H)$ that corresponds to the rational function $1$, we have the inclusions
\begin{equation} \label{inclusions}  I_{e}(mnbL) \subset J_{e}(mnbL) \subset \{ x \in H^{0}(mnbL) \, | \, cx \in I_{e}(mn(b+1)L)\}.    \end{equation}
Moreover, we have the following inequality (with $C_{1}$ being the constant from \autoref{lem.Biglowerdim}):

\begin{equation} \label{secondmaininequality}
    \begin{aligned}
       & \left|  \sum_{m=C_{2}}^{\frac{C_{1}p^e}{\|L\|nb}} \dim_k \frac{ H^{0}( mn(b+1)L)}{I_{e}(mn(b+1)L)} - \sum_{m=C_{2}}^{\frac{C_{1}p^e}{\|L\|nb}} \dim _k \frac{H^{0}(mnbL)}{J_e(mnbL)}\right| \\
       \leq &     \sum_{m=C_{2}}^{\frac{C_{1}p^e}{\|L\|nb}}2\dim_{k} \frac{ H^{0}(mn(b+1)L)}{c H^{0}(mnbL)} +        \left| \sum_{m=C_{2}}^{\frac{C_{1}p^e}{\|L\|nb}}\dim_k \frac{ H^{0}(mnbL)}{I_e(mnbL)} -\sum_{m=C_{2}}^{\frac{C_{1}p^e}{\|L\|nb}} \dim_{k} \frac{ H^{0}(mn(b+1)L)}{I_e(mn(b+1)L)}\right|
    \end{aligned}
\end{equation}
\end{lem}

Before proving \autoref{colon}, we note that putting (\ref{secondmaininequality}) together with (\ref{intermediateinequlaity}), we obtain:

\begin{equation*}
    \begin{aligned}
    &   \left| \sum_{m=C_{2}}^{\frac{C_{1}p^e}{\|L\|nb}}\dim_k\frac{H^0(mbnL)}{I_e(mbnL)} - \sum_{m=C_{2}}^{\frac{C_{1}p^e}{\|L\|nb}}\dim_k \frac{H^0(mb(nL+H))}{I_e(mb(nL+H))} \right| \\
    \leq &   \sum_{m=C_{2}}^{\frac{C_{1}p^e}{\|L\| nb}} 2 \dim_{k} \frac{ H^{0}(mn(b+1)L)}{ H^{0}(mnbL)} +  \sum_{m=C_{2}}^{\frac{C_{1}p^e}{\|L\|nb}} \dim_{k} \frac{H^0(mb(nL+H))}{H^{0}(mnbL)}   \\
    + & 2\left| \sum_{m=C_{2}}^{\frac{C_{1}p^e}{\|L\|nb}}\dim_k \frac{ H^{0}(mnbL)}{I_e(mnbL)} -\sum_{m=C_{2}}^{\frac{C_{1}p^e}{\|L\|nb}} \dim_{k} \frac{ H^{0}(mn(b+1)L)}{I_e(mn(b+1)L)}\right|
   \end{aligned}
\end{equation*}
\medskip

Hence, we get
\begin{equation*}
    \begin{aligned}
    & \left| s_X(nbL) - s_X(b(nL+H))\right|
    = \lim _{e \to \infty} \frac{1}{p^{e(d+1)}}  \left|  \sum  _{m = C_{2}} ^{\frac{C_{1}p^{e}}{\|L\|nb}}  \dim_k\frac{H^0(mbnL)}{I_e(mbnL)} -   \sum  _{m = C_{2}} ^{\frac{C_{1}p^{e}}{\|L\|nb}} \dim_k \frac{H^0(mb(nL+H))}{I_e(mb(nL+H))} \right|  \\
   & \leq   \lim_{e \to \infty} \frac{1}{p^{e(d+1)}} \Bigg( \sum  _{m = C_{2}} ^{\frac{C_{1}p^{e}}{\|L\|nb}}  2 \dim_{k} \frac{ H^{0}(mn(b+1)L)}{ H^{0}(mnbL)} +     \sum  _{m = C_{2}} ^{\frac{C_{1}p^{e}}{\|L\|nb}}  \dim_{k} \frac{H^0(mb(nL+H))}{H^{0}(mnbL)}   \\
    &    +      2\left|\sum  _{m = C_{2}} ^{\frac{C_{1}p^{e}}{\|L\|nb}} \dim_k \frac{ H^{0}(mnbL)}{I_e(mnbL)} -  \sum  _{m = C_{2}} ^{\frac{C_{1}p^{e}}{\|L\|nb}}\dim_{k} \frac{ H^{0}(mn(b+1)L)}{I_e(mn(b+1)L)}\right| \Bigg)\\
    &   \leq \lim_{e \to \infty}  \frac{C_{1}^{d+1}  p^{e(d+1)}}{\|L\|^{d+1} n^{d+1}b^{d+1}p^{e(d+1)}(d+1)!} \, \Bigg(  2 n^{d} \, \vol(L) \, \big( (b+1)^{d} - b^{d} \big ) \,  + \,  b^{d}n^{d} \, \big( \vol(L + \frac{1}{n}H) -\vol(L) \big) \Bigg) \\
    &   + 2 \, \left| s_X(nbL) - s_X(n(b+1)L) \right| \\
    &   = \frac{C_{1}^{d+1} n^{d} b^{d}}{\|L\|^{d+1} n^{d+1} b^{d+1} (d+1)!} \Bigg(  2 \vol(L)\big(\frac{(b+1)^d - b^{d}}{b^{d}} \big) + \big( \vol(L+\frac{1}{n}H) - \vol(L) \big) \Bigg) \\
    &   + \left| s_X(nbL) - s_X(n(b+1)L) \right|
   \end{aligned}
\end{equation*}

Finally, using the scaling property for $s_X$ (\autoref{propertiesofs}), we get:
\begin{equation*}
    \begin{aligned}
    & \left| s_X(L) - s_X(L+\frac{1}{n} H) \right| \\
    &   = nb \, \left| s_X(nbL) - s_X(b(nL+H)) \right| \\
    &   \leq \frac{C_{1}^{d+1}}{\|L\|^{d+1} (d+1)!} \, \Bigg(  2  \, \vol(L) \, \frac{\big( (b+1)^{d} - b^{d} \big )}{b^{d}} \,  + \, \big( \vol(L + \frac{1}{n}H) -\vol(L) \big) \Bigg) \\
    &   + 2 \, nb \, \left| \frac{s_X(L)}{nb} - \frac{s_X(L)}{n(b+1)} \right| \\
    &   = \frac{C_{1}^{d+1}}{\|L\|^{d+1} (d+1)!} \, \Bigg(  2  \, \vol(L) \, \frac{\big( (b+1)^{d} - b^{d} \big )}{b^{d}} \,  + \, \big( \vol(L + \frac{1}{n}H) -\vol(L) \big) \Bigg)   + 2 \,  \frac{s_X(L)}{b+1} 
   \end{aligned}
\end{equation*}
This completes the proof of \autoref{maininequalitylem}, pending the proof of \autoref{colon}, which we prove next.
\end{proof}

\begin{Not}
Recall that $L$ and $H$ are fixed integral Cartier divisors, with $L$ ample and $H$ effective and $n \geq 1$ is any natural number. For any natural number $k \in \NN$, we define:
$$ I_{e}(k) := I_{e}(kL),$$
$$ J_{e}(kn) := H^{0}(knL) \cap I_{e}(k(nL + H)), $$
where we view $H^{0}(knL)$ as a subspace of $H^{0}(k(nL + H))$ via the inclusion map $\cO_{X}(nkL) \subset \cO_{X}(knL + kH) $.
\end{Not}

\begin{proof}[Proof of \autoref{colon}]
The first inclusion in (\ref{colon}) follows from \autoref{inclusion} by taking $D_1 = mnbL$ and $D_2 = mnbH $. The second inclusion follows from \autoref{factorization} and the second part of \autoref{inclusion}, by taking $D_1 = mb(nL+H)$ and $D_2$ to be the effective divisor corresponding to $ d \in H^{0}(mnL -mbH)$.
  Hence, we get
\begin{equation*}
    \begin{aligned}
       & \left| \sum_{m=C_{2}}^{\frac{C_{1}p^e}{\|L\|nb}} \dim_k \frac{ H^{0}( mn(b+1)L)}{I_{e}(mn(b+1))} - \sum_{m=C_{2}}^{\frac{C_{1}p^e}{\|L\|nb}} \dim _k \frac{H^{0}(mnbL)}{J_e(mnb)}\right|  & \\ 
     = &  \left| \sum_{m=C_{2}}^{\frac{C_{1}p^e}{\|L\|nb}}\dim_k \frac{H^{0}(mn(b+1)L)}{c H^{0}(mnbL)} -\sum_{m=C_{2}}^{\frac{C_{1}p^e}{\|L\|nb}} \dim_{k} \frac{I_e(mn(b+1))}{cJ_e(mnb)}\right| \quad \textrm{(rearranging terms)}\\
        \leq  &  \sum_{m=C_{2}}^{\frac{C_{1}p^e}{\|L\|nb}} \dim_k \frac{H^{0}(mn(b+1)L)}{c H^{0}(mnbL)} + \sum_{m=C_{2}}^{\frac{C_{1}p^e}{\|L\|nb}}\dim_{k} \frac{I_e(mn(b+1))}{cJ_e(mnb)} \quad \textrm{(triangle inequality)} \\
        \leq  &  \sum_{m=C_{2}}^{\frac{C_{1}p^e}{\|L\|nb}} \dim_k \frac{H^{0}(mn(b+1)L)}{c H^{0}(mnbL)} +\sum_{m=C_{2}}^{\frac{C_{1}p^e}{\|L\|nb}} \dim_{k} \frac{I_e(mn(b+1))}{cI_e(mnb)} \quad \textrm{(since $cI_{e}(mnb) \subset cJ_{e}(mnb)$ by (\ref{inclusions}))} 
    \end{aligned}
     \end{equation*}
    \begin{equation*}
        \begin{aligned}
        =  & \sum_{m=C_{2}}^{\frac{C_{1}p^e}{\|L\|nb}} \dim_k \frac{H^{0}(mn(b+1)L)}{c H^{0}(mnbL)} + \left|\sum_{m=C_{2}}^{\frac{C_{1}p^e}{\|L\|nb}} \dim_{k} \frac{ H^{0}(mn(b+1)L)}{cI_e(mnb)} - \sum_{m=C_{2}}^{\frac{C_{1}p^e}{\|L\|nb}}\dim_{k} \frac{ H^{0}(mn(b+1)L)}{I_e(mn(b+1))} \right|  \\
         = & \sum_{m=C_{2}}^{\frac{C_{1}p^e}{\|L\|nb}} \dim_k \frac{H^{0}(mn(b+1)L)}{c H^{0}(mnbL)} \\
          & + \left|\sum_{m=C_{2}}^{\frac{C_{1}p^e}{\|L\|nb}} \left(\dim_{k} \frac{ H^{0}(mn(b+1)L)}{c H^{0}(mnbL)} 
         + \dim_k \frac{ H^{0}(mnbL)}{I_e(mnb)} - \dim_{k} \frac{ H^{0}(mn(b+1)L)}{I_e(mn(b+1))}\right)\right| \\
          \leq  & \sum_{m=C_{2}}^{\frac{C_{1}p^e}{\|L\|nb}}   2 \dim_{k} \frac{ H^{0}(mn(b+1)L)}{c H^{0}(mnbL)} +        \left|\sum_{m=C_{2}}^{\frac{C_{1}p^e}{\|L\|nb}} \dim_k \frac{ H^{0}(mnbL)}{I_e(mnb)} - \sum_{m=C_{2}}^{\frac{C_{1}p^e}{\|L\|nb}} \dim_{k} \frac{ H^{0}(mn(b+1)L)}{I_e(mn(b+1))}\right|
    \end{aligned}
\end{equation*}
where in the second-last step, we rearrange the terms of the sum, and in the last step use the triangle inequality again.
This completes the proof of the lemma.
\end{proof}
\medskip

To complete the proof of \autoref{ampledifference}, we need the following lemma about the Lipschitz continuity of the volume function. We record a quick proof for ample classes that works for any algebraically closed field, and in any characteristic:
 
\begin{lem} \label{volumeLipscitz}\cite[Theorem 2.2.44]{LazarsfeldPositivity1}
    Let $X$ be a projective variety of dimension $d$ over $k$. Fix a norm $\| \, \|$ on the real N\'eron-Severi space. Then, there exists a positive constant $C>0$ such that for any two real \emph{ample} classes $\xi$ and $\xi^\prime$, we have:
    $$ | \vol(\xi) - \vol(\xi^\prime) | \leq C \max(\|\xi\|, \| \xi^\prime\|) ^{d-1} \| \xi - \xi^\prime \| . $$ 
\end{lem}

\begin{proof}

Since the volume function coincides with the intersection form on the real Nef cone, it is given by a polynomial $P$ of degree $d$ once we choose a basis for $N^{1}_{\RR}(X)$. Hence, there exists a constant $C$ (depending only on $X$), such that $$\|P^{\prime} (x_{1}, \dots, x_{\rho}) \| \leq  C \|(x_{1}, \dots, x_{\rho}) \|^{d-1}$$
for any vector $(x_{1}, \dots, x_{\rho}) \in \Nef_{\RR}(X) $. With this observation, the Lemma follows from an application of the mean-value theorem.

\end{proof}

\paragraph{\textit{Completion of the proof of \autoref{ampledifference}}:} Recall that $L$ is a fixed ample divisor on $X$ (in particular, $L$ is big). Suppose $L^{\prime}$ is an ample $\QQ$-divisor such that $L^{\prime}-L$ is big. Further assume that $ \| L^\prime - L \| < \frac{\|L\|}{2}$. Then, we may write $L^{\prime} = L + \frac{1}{n}H$ for a suitable effective Cartier divisor $H$ and some natural number $n \geq 1$. 

We would like to apply \autoref{maininequalitylem} to this choice of $L$, $H$ and $n$. For this, we need to choose a natural number $b$ such that $nL - bH$ is big. We note that we may choose $b$ in the following way:
 Since $L$ is big, by openness of the big cone of $X$, there exists a constant $C_4 > 0$ (depending only on $L$) such that any $\QQ$-divisor $D$ satisfying $\| L-D \| \leq C_4$ is also big. Since we need $L -\frac{b}{n}H$ to be big, it is sufficient that $\| \frac{b}{n} H \| \leq C_4$. So we may choose $b(n) = \lfloor \frac{n C_4}{\| H \|} \rfloor$ so that $b(n) \to \infty$ as $n \to \infty$.
 
 Now, applying \autoref{maininequalitylem} to this choice of $n$ and $b$, we get:
 \begin{equation} \label{maininequality2}
  \begin{split}
     |s_X(L) - s_X(L + \frac{1}{n} H)|  
       \leq & \quad  \frac{C_{1}^{d+1}}{\|L\|^{d+1}(d+1)!} \, \Bigg(  2  \, \vol(L) \, \frac{\big( (b+1)^{d} - b^{d} \big )}{b^{d}} \, \\
       & + \, \big( \vol(L + \frac{1}{n}H) -\vol(L) \big) \Bigg) + 2 \,  \frac{s_X(L)}{b+1} 
    \end{split}        \end{equation}
 
 Further, we have
 $$ \frac{(b+1)^{d} -b^{d}}{b^{d}} \leq \frac{2^d}{b}    $$
 and by \autoref{volumeLipscitz}, there is a positive constant $C_{3}$, depending only on $X$ and the norm $\| \, \|$, such that for any two ample classes $\xi_{1}, \xi_{2} \in N^{1} _{\QQ}(X)$,
 
 $$ | \vol(\xi_{1}) - \vol(\xi_{2})| \leq C_{3} \, \big(\max(\|\xi_{1}\|, \| \xi_{2} \|) \big)^{d-1} \|\xi_{1} - \xi_{2} \|  .  $$
 Putting these together, along with (\ref{maininequality2}), and using that $\| \frac{1}{n}H \| = \| L - L^\prime \|$, we get
\begin{equation}
    \begin{split}
     |s_X(L) - s_X(L^\prime)|  
       \leq & \quad  \frac{C_{1}^{d+1}}{(d+1)!} \, \Bigg(  2  \, \vol(L) \, \frac{2^{d}}{b} \, \\
       & + \,  C_{3} \, \|L^{\prime}\|^{d-1} \|L^{\prime} - L\|  \Bigg) +  \frac{2}{b}s_X(L) 
    \end{split}  
\end{equation}

Next, using the fact $b$ was chosen to be $b(n) = \lfloor \frac{n C_4}{\| H \|} \rfloor$, we have $b \geq \frac{n C_4}{2\|H\|}$, using which we get
\begin{equation}
    \begin{split}
     |s_X(L) - s_X(L^\prime)|  
       \leq & \quad  \frac{C_{1}^{d+1}}{\|L^{d+1}\| (d+1)!} \, \Bigg(  2  \, \vol(L) \, \frac{2^{d+1}}{C_4} \|L - L^{\prime} \| \, \\
       & + \,  C_{3} \, \|L^{\prime}\|^{d-1} \|L - L^{\prime} \|  \Bigg) +  \frac{4}{C_4} s_X(L)\|L - L^{\prime} \| 
    \end{split}  
\end{equation}

Lastly, since $\| L - L^\prime \| < \frac{\|L\|}{2}$ we have $\| L^\prime \| < 2 \| L \|$. Hence, we have
\begin{equation}
    \begin{split}
     |s_X(L) - s_X(L^\prime)|  
       \leq & \quad  \frac{C_{1}^{d+1}}{\|L\|^{d+1} (d+1)!} \, \Bigg(  2  \, \vol(L) \, \frac{2^{d+1}}{C_4} \|L - L^{\prime} \| \, \\
       & + \,  2^{d-1} C_{3} \, \|L\|^{d-1} \|L - L^{\prime} \|  \Bigg) +  \frac{4}{C_4} s_X(L)\|L - L^{\prime} \| 
    \end{split}  
\end{equation}
Hence, we see that for any ample, integral divisor $L$, we have
$$ | s_X(L) - s_X(L^\prime)| \leq C(L) \| L - L^\prime \|$$
for all ample $\QQ$-divisors $L^\prime$ such that $L^\prime - L$ is big and $\|L - L^\prime \| <  \frac{\|L\|}{2}$
where $C(L)$ is given by
$$ C(L) =  \frac{C_{1}^{d+1}}{\|L\|^{d+1} (d+1)!} \Big(  \vol(L)\frac{2^{d+1}}{C_4} + 2^{d-1} C_{3} \|L\|^{d-1} \Big) + \frac{4}{C_{4}}s_X(L) . $$
This completes the proof of the Key \autoref{ampledifference}.
\end{proof}

The proof of the Key \autoref{ampledifference} actually shows a stronger and more explicit statement that will be useful to us. We record it in the following Proposition.

\begin{Pn} \label{explicitLipschitz}
For any ample, integral divisor $L$, we have
$$ | s_X(L) - s_X(L^\prime)| \leq C(L) \| L - L^\prime \|$$
for all ample $\QQ$-divisors $L^\prime$ such that $L^\prime - L$ is big and $\|L - L^\prime \| <  \frac{\|L\|}{2}$,
where $C(L)$ maybe chosen to be of the form
$$ C(L) =  \frac{C_{1}^{d+1}}{\|L\|^{d+1} (d+1)!} \Big(  \vol(L)\frac{2^{d+1}}{C_4} + 2^{d-1} C_{3} \|L\|^{d-1} \Big) + \frac{4}{C_{4}}s_X(L) . $$ Here, $C_{1}:= C_1(X)$ is the constant (depending only on $X$) obtained in \autoref{lem.Biglowerdim}, $C_{3}$ depends only on $X$, and $C_{4}:= C_{4}(L)$ is any constant (depending on $L$) with the property that the closed ball $B = \{ D \in \text{N}^1 _\QQ (X) \, | \, \|D - L\| \leq C_{4} \}$ is contained in the big cone of $X$.
\end{Pn}

Next, we examine how the constant $C(L)$ in \autoref{explicitLipschitz} varies with $L$.

\begin{lem} \label{uniformconstants}
     Let $X$ be projective variety and $\cC$ be a closed cone contained in the big cone of $X$. Then, there exists a constant $\tilde{C_4}$ (depending only on $\cC$) such that for any non-zero class $D  \in \cC$, the closed ball
    $$ B (D) = \{\xi \in \mathrm{N}^{1}_{\RR} (X) \, | \, \| \xi - D \| < C_{4} \, \| D \| \} $$
    is contained in $\mathrm{Big}(X)$.
\end{lem}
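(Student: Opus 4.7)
The plan is to use compactness together with the conical (scaling) structure of the situation. Since the big cone $\mathrm{Big}(X)$ is open in $\mathrm{N}^1_\RR(X)$, its complement $Z := \mathrm{N}^1_\RR(X) \setminus \mathrm{Big}(X)$ is closed. Both $\mathrm{Big}(X)$ and $\cC$ are (nonempty) cones, so rescaling preserves membership, and this is what allows a single constant $\tilde{C}_4$ to work uniformly in $\|D\|$.

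First, I would intersect $\cC$ with the unit sphere $S := \{\xi \in \mathrm{N}^1_\RR(X) : \|\xi\| = 1\}$ to obtain the set $K := \cC \cap S$. Since $\mathrm{N}^1_\RR(X)$ is finite-dimensional, $S$ is compact; since $\cC$ is closed, $K$ is compact. Because $\cC \subseteq \mathrm{Big}(X)$, we have $K \subseteq \mathrm{Big}(X)$, so the continuous distance function
\[
g : K \to \RR_{\geq 0}, \qquad g(D') := \mathrm{dist}(D', Z)
\]
is strictly positive on $K$. By compactness of $K$, $g$ attains a positive minimum, which I would define to be $\tilde{C}_4 := \min_{D' \in K} g(D') > 0$. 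By construction, for every unit-norm class $D' \in K$, the open ball $B(D', \tilde{C}_4)$ is contained in $\mathrm{Big}(X)$.

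Next, I would upgrade this to the conical statement by scaling. Given any nonzero $D \in \cC$, write $D = \|D\| \cdot D'$ with $D' := D/\|D\| \in K$. If $\xi \in \mathrm{N}^1_\RR(X)$ satisfies $\|\xi - D\| < \tilde{C}_4 \|D\|$, then $\|\xi/\|D\| - D'\| < \tilde{C}_4$, so $\xi/\|D\| \in B(D', \tilde{C}_4) \subseteq \mathrm{Big}(X)$. Since $\mathrm{Big}(X)$ is itself a cone and $\|D\| > 0$, we conclude $\xi \in \mathrm{Big}(X)$, which is exactly the claim.

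The argument is straightforward, and I do not anticipate a main obstacle; the only mild subtlety is remembering that one should work on the unit sphere slice of $\cC$ rather than on $\cC$ itself, since $\cC$ is unbounded and the raw distance to $Z$ is not bounded below on $\cC$. Bounding the normalized distance on the compact slice $K$ and then using homogeneity is what delivers the single constant $\tilde{C}_4$ independent of $D$.
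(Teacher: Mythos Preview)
Your proof is correct and follows essentially the same route as the paper: intersect $\cC$ with the unit sphere to get a compact set contained in the open set $\mathrm{Big}(X)$, extract a uniform radius $\tilde{C}_4$ by compactness, and then use the cone (scaling) structure of both $\cC$ and $\mathrm{Big}(X)$ to pass from the unit-norm case to arbitrary nonzero $D$. Your formulation via the distance function to the complement is a slightly more explicit way of producing $\tilde{C}_4$, but the argument is the same.
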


\begin{proof}
Consider the set
        $$  \kappa := \{ D^{\prime} \in \cC \, | \, \|D^{\prime}\| = 1 \}.     $$
    Since $\cC$ is a closed cone, $\kappa$ is a compact subset of $\cC$. Moreover, since $\cC$ is contained in the big cone of $X$ and because $\mathrm{Big}(X)$ is an open subset of $\text{N}^{1} _{\RR}(X)$, there exists a positive real number $\tilde{C}_4 > 0$ such that the ball $B_{\tilde{C_4}} (D) = \{ \xi \, | \, \|D- \xi \| \leq \tilde{C}_4 \}$ is contained in the big cone for all $D \in \kappa$. Now the lemma follows by considering $\frac{1}{\|D\|} D \in \kappa $ whenever $D$ is a non-zero class in $\cC$.
\end{proof}

 \begin{lem} \label{normchange}
  Given any two norms $\| \   \|_{1}$ and $\| \ \|_{2}$ on the vector space $\RR^{N}$, we have positive constants $\mu_{1}$ and $\mu_{2}$ such that for any vector $\nu \in \RR^{N}$,
  $$ \mu_{1} \| \nu \|_{1} \leq \| \nu \|_{2} \leq \mu_{2} \|\nu \|_{1}. $$
  \end{lem}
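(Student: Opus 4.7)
The plan is to invoke the standard fact from finite-dimensional functional analysis that all norms on a finite-dimensional real vector space are equivalent. Since equivalence of norms is itself an equivalence relation, it suffices to fix a reference norm --- say the sup-norm $\|\nu\|_{\infty} := \max_{1 \leq i \leq N} |\nu_i|$ on $\RR^N$ --- and show that an arbitrary norm $\|\,\,\|$ is equivalent to it.

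First I would establish the upper bound. Writing $\nu = \sum_{i=1}^{N} \nu_i e_i$ in the standard basis and applying the triangle inequality gives
$$ \|\nu\| \leq \sum_{i=1}^{N} |\nu_i|\,\|e_i\| \leq \Bigl(\sum_{i=1}^{N} \|e_i\|\Bigr) \|\nu\|_{\infty}, $$
so the constant $M := \sum_{i=1}^{N} \|e_i\|$ gives $\|\nu\| \leq M\,\|\nu\|_{\infty}$ for every $\nu \in \RR^N$.

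The main step, and the only nontrivial one, is the reverse inequality, which requires a compactness argument. The inequality just proved implies that the map $\nu \mapsto \|\nu\|$ is continuous with respect to the topology induced by $\|\,\,\|_{\infty}$, since $|\,\|\nu\| - \|\nu'\|\,| \leq \|\nu - \nu'\| \leq M\,\|\nu - \nu'\|_{\infty}$. The unit sphere $S := \{\nu \in \RR^N : \|\nu\|_{\infty} = 1\}$ is closed and bounded in $(\RR^N, \|\,\,\|_{\infty})$, hence compact by the Heine--Borel theorem. Therefore the continuous positive function $\nu \mapsto \|\nu\|$ attains a minimum $m > 0$ on $S$ (positivity uses that $\nu \neq 0$ forces $\|\nu\| > 0$). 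Scaling an arbitrary non-zero $\nu$ by $\|\nu\|_{\infty}^{-1}$ to land on $S$ then yields $\|\nu\| \geq m\,\|\nu\|_{\infty}$ for all $\nu$.

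Combining the two bounds, any norm $\|\,\,\|$ satisfies $m\,\|\nu\|_{\infty} \leq \|\nu\| \leq M\,\|\nu\|_{\infty}$. Applying this to both $\|\,\,\|_1$ and $\|\,\,\|_2$ and chaining the inequalities produces the desired constants $\mu_1$ and $\mu_2$ comparing $\|\,\,\|_1$ and $\|\,\,\|_2$ directly. The only substantive ingredient is Heine--Borel compactness of the sup-norm unit sphere; everything else is the triangle inequality and rescaling.
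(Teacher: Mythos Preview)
Your proof is correct; it is the standard compactness argument for equivalence of norms on a finite-dimensional space. The paper does not give a proof at all, instead citing a textbook exercise (Folland, \emph{Real Analysis}, Section~5.1, Exercise~6), so your written-out argument simply fills in what the reference would contain.
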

  \begin{proof}
 See \cite[Section 5.1, Ex. 6]{FollandRealAnalysis}.\end{proof}

\begin{lem} \label{choosingr}
Let $e_1, \dots, e_\rho $ be a basis for the N\'eron-Severi space of $X$, where each $e_i$ corresponds to a big divisor. Let $\cC$ denote the closed cone generated by the $e_{i} $'s and $\| \, \|$ denote the sup-norm with respect to the basis $\{e_i\}$. For any $L$ in $\cC$, let $\lambda_{i}(L)$ denote the $i^{\text{th}}$-coordinate of $L$ with respect to the basis $\{e_i\}$. Suppose we have two positive numbers $ 0 < A_1 < A_2$ and a compact subset $\kappa$ of $\cC$ defined by
$$\kappa = \{ \xi \in \cC \, | \, A_1 \leq \|\xi \| \leq A_2 \}.$$
In this situation, for every $D$ in the interior of $\kappa$, there exists a positive real number $r(D)$ such that the following three conditions are satisfied:
 \begin{enumerate}
    \item $ r(D) < \frac{A_{1}}{2}$.
      \item The closed ball
  $$ B_{r(D)} := \{D^{\prime} \, | \, \| D^{\prime} - D\| \leq r(D) \} $$ 
  is contained in the interior of $\kappa$.
  
  \item For any two $\QQ$-divisors $L$ and $L^{\prime}$ in $ B_{r(D)}$, setting $\lambda = \max_{i} \{\frac{\lambda_{i}(L)}{\lambda_{i}(L^{\prime})}\}$, we have
  $$  A_{1} < \lambda \|L^{\prime} \|  < A_{2} $$
and $$ \|\lambda L^{\prime} - L \| < \frac{A_{1}}{2} .$$
  \end{enumerate}

\end{lem}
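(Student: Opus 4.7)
The plan is a continuity/compactness argument: pick $r(D)$ small enough so that $B_{r(D)}(D)$ sits in the interior of $\kappa$ (giving (1) and (2)), and then further shrink $r(D)$ so that the scalar $\lambda$ stays close enough to $1$ to force (3). Because $\{e_{i}\}$ is a basis and $\cC$ is the cone they generate, $\cC$ is simplicial and its interior equals $\{\xi : \lambda_{i}(\xi) > 0 \text{ for all } i\}$. Since $D$ is in the interior of $\kappa$, I have $A_{1} < \|D\| < A_{2}$ and $\mu := \min_{i}\lambda_{i}(D) > 0$; these positive quantities are the only data I need from $D$.

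First, I would handle (1) and (2) by requiring
$$r(D) < \min\bigl\{\tfrac{A_{1}}{2},\ \mu,\ \|D\| - A_{1},\ A_{2} - \|D\|\bigr\}.$$
Because $\|\cdot\|$ is the sup-norm in the coordinates $\lambda_{i}$, the closed ball $B_{r(D)}(D)$ is just a coordinate box of radius $r(D)$; this choice then guarantees that every $\xi \in B_{r(D)}(D)$ has $\lambda_{i}(\xi) > 0$ (so $\xi \in \mathrm{int}(\cC)$) and $A_{1} < \|\xi\| < A_{2}$, i.e. $\xi \in \mathrm{int}(\kappa)$.

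For (3), I would further shrink $r(D)$. If $L, L' \in B_{r(D)}(D)$ and $r(D) < \mu/2$, then for each $i$ both $\lambda_{i}(L)$ and $\lambda_{i}(L')$ lie in $[\lambda_{i}(D) - r(D),\, \lambda_{i}(D) + r(D)]$, so
$$\frac{\lambda_{i}(D) - r(D)}{\lambda_{i}(D) + r(D)} \ \leq\ \frac{\lambda_{i}(L)}{\lambda_{i}(L')} \ \leq\ \frac{\lambda_{i}(D) + r(D)}{\lambda_{i}(D) - r(D)}.$$
The function $x \mapsto (x + r)/(x - r)$ is decreasing on $(r, \infty)$, so maximizing over $i$ yields
$$|\lambda - 1| \ \leq\ \frac{2 r(D)}{\mu - r(D)},$$
which tends to $0$ as $r(D) \to 0$. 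From here the triangle inequality gives
$$|\lambda \|L'\| - \|D\|| \ \leq\ |\lambda - 1|\,\|L'\| + |\|L'\| - \|D\||,$$
$$\|\lambda L' - L\| \ \leq\ |\lambda - 1|\,\|L'\| + \|L' - L\|,$$
and since $\|L'\| \leq \|D\| + r(D) \leq A_{2}$, both right-hand sides go to $0$ with $r(D)$. Thus I can pick $r(D)$ small enough so that the first quantity is below $\min\{\|D\| - A_{1}, A_{2} - \|D\|\}$ and the second is below $A_{1}/2$, completing (3).

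There is no real obstacle here — the whole argument is elementary $\epsilon$-bookkeeping, made transparent by the choice of sup-norm in the $\{e_{i}\}$-coordinates. The only subtlety is that a single $r(D)$ must satisfy all the estimates simultaneously, but since $\mu$, $\|D\|$, $A_{1}$, $A_{2}$ are fixed positive numbers once $D$ is fixed, taking the minimum of finitely many positive bounds suffices. (Had the lemma been stated for an arbitrary norm rather than the sup-norm, one would additionally invoke \autoref{normchange} to translate between norms, but this is not needed for the present statement.)
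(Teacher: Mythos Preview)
Your proof is correct and follows essentially the same approach as the paper: shrink $r(D)$ so that the ratios $\lambda_{i}(L)/\lambda_{i}(L')$ (hence $\lambda$) stay close to $1$, then use the triangle inequality to control $\lambda\|L'\|$ and $\|\lambda L' - L\|$. The only cosmetic difference is that the paper packages the closeness of $\lambda$ to $1$ via an auxiliary $\varepsilon$ with $(1\pm\varepsilon)L' \in B_{r}$, whereas you write down the explicit bound $|\lambda - 1| \leq 2r(D)/(\mu - r(D))$ directly; the content is the same.
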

\begin{proof}
 First, pick any positive number $r < \frac{A_{1}}{4}$ such that $B_{r}$, the closed ball of radius $r$ around $D$ is contained in $\kappa$ (this is possible since $D$ is contained in the interior of $\kappa$). Now, there exists a positive number $\varepsilon$ such that for any $L$ in $B_{r/2}$, both $(1-\varepsilon)L$ and $(1+\varepsilon)L$ are contained in $B_{r}$. Finally pick $0 < r(D) < r/2$ so small that for each $i$, we have
    $$ \left| 1 - \frac{\lambda_{i}(L)}{\lambda_{i}(L^{\prime})} \right|< \varepsilon  $$
    for all $L, L^{\prime}$ in  $B_{r(D)}$. This is possible due to the local uniform continuity of the function $\lambda_{i}(L)$ as $L$ varies. By construction, for any $L, L^{\prime} \in B_{r(D)}$, we have 
    $$ 1 - \varepsilon < \lambda = \max_{i} \left\{\frac{\lambda_{i}(L)}{\lambda_{i}(L^{\prime})}\right\} < 1+ \varepsilon. $$
    This ensures that $\lambda L^{\prime}$ is in $B_{r}$ and since $r < \frac{A_{1}}{4}$, also that
    $$ \|\lambda L^{\prime} - L \| \leq \| \lambda L^{\prime} - D \| + \| D- L \| < \frac{3r}{2} <\frac{A_{1}}{2} .$$
\end{proof}

Finally, we can now prove \autoref{mainthm}.

\paragraph{\textit{Completion of the proof of \autoref{mainthm}:}} \label{finalstep} 
 Fix a real class $D$ in the ample cone $\Amp_{\RR}(X)$. Then, to prove that $s_X$ is locally Lipschitz around $D$, by \autoref{normchange}, we may a pick a suitable norm depending on $D$. Since the ample cone $\Amp_\RR (X)$ is an open subset of $N^1 _\RR(X)$, given $D$ in $\Amp_ \RR(X)$, we may pick a basis $e_{1}, \dots, e_\rho$ for $N^1 _\RR  (X)$ such that each $e_i$ is the class of an ample invertible sheaf and such that $D$ in contained in the interior of the cone generated by the $e_i$'s (equivalently, $D = \sum a_{i} e_i$ with each $a_i > 0$). Let $\cC = \{ a_{i} e_i \, | \, a_i \geq 0 \}$ denote the closed cone generated by the $e_i$'s and $\| \, \|$ denote the sup-norm with respect to the basis $\{e_i\}$.

Pick two positive real numbers $A_{1}$ and $A_2$ such that $0 < A_{1} < \|D \| < A_2 $. Let $\kappa = \{ D^\prime \in \cC  \, | \,  A_1 \leq \| D^\prime \| \leq A_2 \}$.  We will first consider the case of any two $\QQ$-divisors $L$ and $L^\prime$ in $\kappa$ such that $ L^{\prime} - L$ is big and $\| L^\prime - L \| <\frac{\|L\|}{2}$. Choose an integer $r \gg0$ such that $rL$ is integral. Then, we may apply \autoref{explicitLipschitz} to $rL$ and $rL^\prime$, to get

\begin{equation} \label{almostfinalequation} | s_X(rL) - s_X(rL^\prime)| \leq C(rL) \| rL - rL^\prime \| \end{equation}

where 
\begin{equation} \label{scalingofC} C(rL) =  \frac{C_{1}^{d+1}}{\|rL\|^{d+1} (d+1)!} \Big(  \vol(rL)\frac{2^{d+1}}{C_4(rL)} + 2^{d-1} C_{3} \|rL\|^{d-1} \Big) + \frac{4}{C_{4}(rL)}s_X(rL) . \end{equation}

Now, applying \autoref{uniformconstants} to the cone $\cC$, we may pick $C_{4}(rL)$ with the property that
$$  C_{4}(rL) \geq \tilde{C_{4}} \, \|rL\| $$
for some constant $ \tilde{C_4}$ (depending only on the basis $\{e_i\}$) and for all $r$ and all $L \in \cC$. Using this in (\ref{scalingofC}), we get

 \begin{equation} \label{finalsplitinequality}
 \begin{split}
  C(rL) & \leq  \frac{C_{1}^{d+1}}{\|rL\|^{d+1} (d+1)!} \Big(  \vol(rL)\frac{2^{d+1}}{\tilde{C_4}\|rL\|} + 2^{d-1} C_{3} \|rL\|^{d-1} \Big) + \frac{4}{\tilde{C_{4}}\|rL\|}s_X(rL) \\
  & =  \frac{C_{1}^{d+1}}{(d+1)!\|L\|^{d+1} r^{d+1}} \Big(  r^{d} \vol(L)\frac{2^{d+1}}{r \tilde{C_4}\|L\|} + 2^{d-1} C_{3} r^{d-1} \|L\|^{d-1} \Big) + \frac{4}{r^2 \tilde{C_{4}}\|L\|}s_X(L) \\
  & = \frac{1}{r^2} \Bigg(\frac{C_{1}^{d+1}}{(d+1)!\|L\|^{d+1} } \Big(   \vol(L)\frac{2^{d+1}}{ \tilde{C_4}\|L\|} + 2^{d-1} C_{3} \|L\|^{d-1} \Big) + \frac{4}{ \tilde{C_{4}}\|L\|}s_X(L)     \Bigg)
  \end{split}
\end{equation}

Now, using the fact that $\vol(L)$ is a continuous function of $L$ \cite{LazarsfeldMustataConvexbodies}, we may find a constant $A_{3}$ (depending only on the compact set $\kappa$) such that $\vol(L) \leq A_{3} $ for all $L$ in $\kappa$. Using this together with the bounds $A_{1} \leq \|L \| \leq A_2$ in (\ref{finalsplitinequality}), we get 
$$  C(rL) \leq \frac{1}{r^2} \Bigg(\frac{C_{1}^{d+1}}{(d+1)!\|A_{1}\|^{d+1} } \Big(   A_{3} \frac{2^{d+1}}{ \tilde{C_4} A_{1}} + 2^{d-1} C_{3} A_{2}^{d-1} \Big) + \frac{4}{ \tilde{C_{4}} A_{1}}     \Bigg).$$

So setting
$$ C^{\prime}(D) = \Bigg(\frac{C_{1}^{d+1}}{(d+1)!\|A_{1}\|^{d+1} } \Big(   A_{3} \frac{2^{d+1}}{ \tilde{C_4} A_{1}} + 2^{d-1} C_{3} A_{2}^{d-1} \Big) + \frac{4}{ \tilde{C_{4}} A_{1}}     \Bigg),$$
and using it in (\ref{almostfinalequation}), we have
$$ | s_X(rL) - s_X(rL^\prime)| \leq \frac{1}{r^2} C^{\prime}(D) \| rL - rL^\prime \|  . $$
Using the scaling property of $s_X$ for $\QQ$-divisors (\autoref{propertiesofs}), this in turn implies,
\begin{equation} \label{almostthere}
    |s_X(L)  - s_X(L^\prime) | \leq C^{\prime}(D) \| L - L^\prime \|
\end{equation}
for any two $\QQ$-divisors $L$, $L^\prime$ in $\kappa$ such that $L^\prime - L$ is big and $\|L - L^\prime \| \leq \frac{\|L\|}{2}$. Note that $C^{\prime}(D)$ only depends on the set $\kappa$ and hence only on $D$.
 
  \medskip
  To complete the proof of \autoref{mainthm}, we need to remove the assumption that $L^{\prime}-L$ is big from inequality (\ref{almostthere}). For any $L$ in $\cC$, let $\lambda_{i}(L)$ denote the $i^{\text{th}}$-coordinate of $L$ with respect to the basis $\{e_{i}\}$ (for $ 1 \leq i \leq \rho$).  Now, since $D$ is contained in the interior of $\kappa$, by \autoref{choosingr} there exists a positive $r(D)$ satisfying the
  following three conditions:
  
  \begin{enumerate}
    \item $ r(D) < \frac{A_{1}}{2}$.
      \item The closed ball
  $$ B_{r(D)} := \{D^{\prime} \, | \, \| D^{\prime} - D\| \leq r(D) \} $$ 
  is contained in the interior of $\kappa$.
  
  \item For any two $\QQ$-divisors $L$ and $L^{\prime}$ in $ B_{r(D)}$, setting $\lambda = \max_{i} \{\frac{\lambda_{i}(L)}{\lambda_{i}(L^{\prime})}\}$, we have
  $$  A_{1} < \lambda \|L^{\prime} \|  < A_{2} $$
and $$ \|\lambda L^{\prime} - L \| < \frac{A_{1}}{2} .$$

  \end{enumerate}

   Fix such an $r(D)$. For any two $\QQ$-divisors $L$ and $L^{\prime}$ in $B_{r(D)}$ such that $L^{\prime}$ is not a multiple of $L$, setting $\lambda = \max_{i} \{\frac{\lambda_{i}(L)}{\lambda_{i}(L^{\prime})} \}$, then $ \lambda L^{\prime} - L$ is ample (hence, big). Indeed, recall that $e_i$'s are an ample basis for $N^1_\R(X)$ and the $j$-th coordinate of $\lambda L' - L$ is

\[\lambda \lambda_j(L') - \lambda_j(L)  = \lambda_j(L') \left(\lambda- \frac{\lambda_j(L)}{\lambda_j(L')} \right)\geq 0\]
The right hand side is non-negative since $\lambda$ is the maximum of $\lambda_i(L)/\lambda_i(L')$. Now, if $\lambda = \lambda_j(L)/\lambda_j(L')$ for all $j$, then $L'$ is a multiple of $L$. Therefore, if $L^{\prime}$ is not a multiple of $L$, one of the coefficients of $\lambda L' - L$ is strictly positive, which implies $\lambda L' - L$ is ample.
   
   Furthermore, $\lambda L^{\prime} \in \kappa$ and $\|\lambda L^{\prime} - L \| < \frac{\|L\|}{2} $ (these are ensured by condition (c) on $r(D)$). Hence, using (\ref{almostthere}) we have 
  $$|s_X(\lambda L^{\prime}) - s_X(L)| \leq C^{\prime}(D) \| \lambda L^{\prime} - L \| $$
  for any two ample $\QQ$-divisors $L$ and $L^{\prime}$ contained in $B_{r(D)}$.

  Pick a positive constant $A_{4}$ (depending only on $D$ and $r(D)$) such that we $ \lambda_{i}(L)  \geq A_{4}$ for any $L$ in $B_{r(D)}$ and all $i$. This is possible because $\kappa$, hence the closed ball $B_{r(D)}$ is contained in the interior of the cone $\cC$. Since for some $i$, we have $\lambda = \frac{\lambda_{i}(L)}{\lambda_{i}(L^{\prime})} $, we have
  $$  |\lambda - 1| \leq \frac{|\lambda_{i} - \lambda_{i} ^{\prime} |}{\lambda_{i}^{\prime}} \leq \frac{\|L - L^{\prime}\| }{A_{4}}      .$$
  Similarly, we have
  $$  \left|\frac{1}{\lambda} - 1\right| \leq  \frac{\|L - L^{\prime}\| }{A_{4}}      .$$
  To conclude the argument, we note that
$$
\begin{aligned}
| s_X(L) - s_X(L^\prime)| \leq & |s_X(L) - s_X(\lambda L^{\prime})| + |s_X(\lambda L^{\prime}) - s_X(L^\prime)|   \\
& \leq C^{\prime}(D) \| L - \lambda L^{\prime} \|+ |\frac{1}{\lambda} - 1|s_X(L^\prime) \\
& \leq C^{\prime}(D) \|L - L^\prime \| + C^{\prime}(D)|1- \lambda| \| L^{\prime} \| + |\frac{1}{\lambda} - 1|s_X(L^{\prime}) \\
& \leq C^{\prime}(D) \|L - L^\prime \| + C^{\prime}(D) \frac{A_{2}}{A_{4}} \|L - L^{\prime} \|  + \frac{1}{A_{4}} \|L - L^{\prime}\|.
\end{aligned}   $$

Lastly, if $L^{\prime}$ were a multiple of $L$, then only the last term in the above inequality suffices. Thus, we see that for our choice of $r(D)$, choosing $C(D) = C^{\prime}(D) + C^{\prime}(D) \frac{A_{2}}{A_{4}} + \frac{1}{A_{4}}$ works for the inequlaity (\ref{Lipschitzinequality}), hence proving \autoref{mainthm}.
\qedsymbol

\section{Extending the $F$-signature function to the Nef Cone.}
 In this section, we will prove that the $F$-signature function, originally defined in Section 3 only on the ample cone (\autoref{F-sigdfn}) extends continuously to the non-zero classes in the nef cone.

\begin{thm} \label{extensionthm}
Suppose that $X$ is a globally $F$-regular projective variety of dimension $d$. Then the $F$-signature function $s_X$ extends continuously to all non-zero classes of the Nef cone $\Nef_{\RR}(X)$. Moreover, if $D$ is a nef Cartier divisor which is not big, then $s_X(D) = 0$.
\end{thm}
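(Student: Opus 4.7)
The plan is to first establish an effective upper bound $s_X(L) \lesssim \vol(L)/\|L\|^{d+1}$ on the ample cone, and then leverage this bound (together with the Lipschitz estimates from Section~4) both to define the continuous extension and to prove the vanishing statement for nef non-big divisors.

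For the upper bound, I would start from the formula $a_e(L) = \sum_{m} \dim_k H^0(mL)/I_e(mL)$ in \autoref{formulaforFsig} and bound each summand trivially by $h^0(mL)$. By \autoref{lem.Biglowerdim} only the range $0 \leq m \leq C_1 p^e/\|L\|$ contributes nontrivially. The crucial simplification here is that since $X$ is globally $F$-regular and $L$ is nef (being ample), \autoref{vanishingforGFR} gives $h^i(mL) = 0$ for all $i > 0$, so $h^0(mL) = \chi(mL)$ is a polynomial in $m$ of degree $d$ with leading coefficient $\vol(L)/d!$. Summing $\chi(mL)$ over $0 \leq m \leq C_1 p^e/\|L\|$ and dividing by $p^{e(d+1)}$, all lower-order terms vanish in the limit $e \to \infty$, yielding $s_X(L) \leq \frac{C_1^{d+1}}{(d+1)!}\cdot\frac{\vol(L)}{\|L\|^{d+1}}$. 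A pleasing consistency check is that both sides scale as $1/\lambda$ under $L\mapsto \lambda L$.

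The vanishing assertion then follows immediately: for a nef Cartier divisor $D$ with $\vol(D) = D^d = 0$, any sequence of ample $\QQ$-classes $L_n \to D$ satisfies $\vol(L_n)\to 0$ by continuity of $\vol$ on $N^1_\RR(X)$, and $\|L_n\|\to\|D\|>0$, so the upper bound forces $s_X(L_n)\to 0$; we set $s_X(D):=0$. For continuity of the extension at a nef big class $D$ lying on the boundary of the ample cone, I would invoke \autoref{explicitLipschitz}: its Lipschitz constant is assembled from $\vol(L)$, $\|L\|$, $s_X(L)$, and $C_4(L)$---the latter being any positive constant such that the ball of radius $C_4(L)$ around $L$ lies in the big cone. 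Since $D$ lies in the interior of the big cone, $C_4(L)$ stays bounded below on a small neighborhood of $D$, and the other quantities stay bounded too, so the Lipschitz constant is uniform near $D$. Any sequence of ample $L_n \to D$ is therefore Cauchy under $s_X$, producing a unique continuous extension. To finish the continuity of the extended function on the whole non-zero nef cone, one checks that a mixed sequence of nef classes (some ample, some not) converging to a nef non-big limit still forces $s_X \to 0$ via the same upper bound applied to the already-extended values.

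The main obstacle I anticipate is bookkeeping in the upper bound: one must track the interaction between the norm-dependent cut-off $m \leq C_1 p^e/\|L\|$ from \autoref{lem.Biglowerdim} and the polynomial expression for $\chi(mL)$, confirming that the sum of lower-order terms is genuinely $o(p^{e(d+1)})$ uniformly in $L$ over compact pieces of the nef cone away from the origin. A secondary subtlety is ensuring that at nef big boundary points the constant $C_4(L)$ in \autoref{explicitLipschitz} really is uniformly bounded below by openness of the big cone, since that openness is what permits us to run the Lipschitz argument all the way up to the boundary of the ample cone.
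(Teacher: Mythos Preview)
Your proposal is correct and follows essentially the same approach as the paper. The paper isolates your upper bound as a separate lemma (\autoref{boundonFsig}) and proves it exactly as you describe---summing $h^0(mL)$ over $m \leq C_1 p^e/\|L\|$ via \autoref{formulaforFsig} and \autoref{lem.Biglowerdim} and invoking the Hilbert polynomial---then applies it to sequences approaching nef non-big classes just as you do; for big nef boundary points the paper likewise re-runs the Lipschitz machinery of \autoref{mainthm}, with the key observation (matching yours) that one only needs a basis of \emph{big} divisors so that \autoref{uniformconstants} still supplies a uniform lower bound on $C_4$.

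One minor remark: your sketch for the big nef case cites \autoref{explicitLipschitz} directly, but that proposition has the asymmetric hypothesis that $L'-L$ be big and $L$ be integral, so to get a genuine two-sided Lipschitz estimate near $D$ you still need the perturbation step (the $\lambda$-rescaling) from the end of the proof of \autoref{mainthm}. The paper handles this by saying ``arguing verbatim as in the final step of the proof of \autoref{mainthm}'' after choosing a big basis; you would need to do the same rather than citing \autoref{explicitLipschitz} alone.
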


We prove \autoref{extensionthm} in two parts, depending on whether or not $L$ is big. First, we have the following comparison of the $F$-signature function with the volume function:

\begin{lem} \label{boundonFsig}
   Let $X$ be a globally $F$-regular projective variety of dimension $d$. Fix a norm $\| \, \|$ on the N\'eron-Severi space of $X$. Let  $C_{1}$ be a constant such that for any non-zero effective divisor $L$, we have (see \autoref{Iedfn} for the notation),
    $$ I_{e}(mL) = H^{0}(mL) \textrm{ for all $m > \frac{C_{1}}{\|L\|} p^{e}$}.$$
    
    The existence of such a constant is guaranteed by \autoref{lem.Biglowerdim}. Then, for any ample Cartier divisor $D$ on $X$, we have
   \begin{equation}
       s_X(D) \leq \frac{C_{1}^{d+1}\vol(D)}{\| D \|^{d+1} (d+1)!}.
   \end{equation}
\end{lem}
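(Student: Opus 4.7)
The plan is to use the formula for the $F$-signature established in \autoref{formulaforFsig} together with the vanishing bound from \autoref{lem.Biglowerdim} to truncate the sum, and then use the definition of the volume to control the leading asymptotic.

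First, I would recall that by \autoref{formulaforFsig},
\[
s_X(D) \;=\; \lim_{e\to\infty} \frac{1}{p^{e(d+1)}} \sum_{m=0}^{\infty} \dim_k \frac{H^0(mD)}{I_e(mD)}.
\]
Since $D$ is ample (hence effective after replacing by a linearly equivalent divisor; one just applies \autoref{lem.Biglowerdim} to any effective multiple and rescales), \autoref{lem.Biglowerdim} implies $H^0(mD) = I_e(mD)$ for every $m > C_1 p^e/\|D\|$, so the infinite sum is actually a finite sum:
\[
s_X(D) \;=\; \lim_{e\to\infty} \frac{1}{p^{e(d+1)}} \sum_{m=0}^{\lfloor C_1 p^e/\|D\|\rfloor} \dim_k \frac{H^0(mD)}{I_e(mD)}.
\]

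Next, I would use the trivial estimate $\dim_k H^0(mD)/I_e(mD) \leq \dim_k H^0(mD)$ together with the asymptotic from the definition of volume (\autoref{def.volume}): for any $\varepsilon > 0$ there exists $m_0$ such that $\dim_k H^0(mD) \leq (\vol(D) + \varepsilon)\, m^d/d!$ for all $m \geq m_0$. Splitting the sum into the finitely many terms with $m < m_0$ (which contribute negligibly after dividing by $p^{e(d+1)}$) and the remaining terms, we obtain
\[
\sum_{m=0}^{\lfloor C_1 p^e/\|D\|\rfloor} \dim_k \frac{H^0(mD)}{I_e(mD)} \;\leq\; \frac{\vol(D)+\varepsilon}{d!} \sum_{m=0}^{\lfloor C_1 p^e/\|D\|\rfloor} m^d \;+\; O(1).
\]

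Finally, using the standard estimate $\sum_{m=0}^{N} m^d = N^{d+1}/(d+1) + O(N^d)$ with $N = \lfloor C_1 p^e/\|D\|\rfloor$, dividing through by $p^{e(d+1)}$ and letting $e \to \infty$ gives
\[
s_X(D) \;\leq\; (\vol(D)+\varepsilon) \cdot \frac{C_1^{d+1}}{\|D\|^{d+1}(d+1)!}.
\]
Since $\varepsilon > 0$ was arbitrary, the desired inequality follows. No step here looks like a serious obstacle; the whole argument is a direct combination of the free-rank formula, the cutoff from \autoref{lem.Biglowerdim}, and the asymptotic growth rate of $h^0(mD)$.
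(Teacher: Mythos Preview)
Your proof is correct and follows essentially the same route as the paper: apply the free-rank formula from \autoref{formulaforFsig}, truncate the sum at $m \leq C_1 p^e/\|D\|$ using \autoref{lem.Biglowerdim}, bound each term by $\dim_k H^0(mD)$, and estimate the resulting sum via the asymptotic $h^0(mD) \sim \vol(D)\, m^d/d!$. The paper compresses the last step into a single line invoking the Hilbert polynomial, while you make the $\varepsilon$-argument and the power-sum estimate explicit; this is a matter of exposition, not of strategy.
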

Note that the right-hand side has the same order of decay as the $F$-signature function, decaying in the order of $1/\|D\|$ as the norm of divisor $\|D\| \to \infty$.
\begin{proof}
Using \autoref{formulaforFsig} to calculate the $F$-signature $s_X(D)$, we have

\[
\begin{split}
s_X(D) &= \lim_{e\to\infty} \frac{1}{p^{e(d+1)}}\sum_{m=0}^\infty \dim_k\frac{H^0(mD)}{I_e(mD)}\\
 & = \lim_{e\to\infty} \frac{1}{p^{e(d+1)}}\sum_{m=0}^{\frac{C_{1} p^e}{\|D\|}} \dim_k\frac{H^0(mD)}{I_e(mD)} \\
 & \leq \lim_{e\to\infty} \frac{1}{p^{e(d+1)}}\sum_{m=0}^{\frac{C_{1} p^e}{\|D\|}} \dim_k H^0(mD)\\
 & \leq \lim_{e\to\infty} \frac{1}{p^{e(d+1)}} \frac{\vol(D)}{(d+1)!} \left(\frac{C_{1}p^e}{\|D\|} \right)^{d+1} \quad \text{(using the Hilbert-polynomial of $D$)} \\
 & = \frac{C_{1}^{d+1}\vol(D)}{\| D \|^{d+1} (d+1)!}
\end{split}
\]

\end{proof}

\begin{proof}[Proof of \autoref{extensionthm}]
    First suppose that $D$ is a non-zero nef divisor that is not big. Then, for any sequence $\{L_t\}_t $ of ample $\QQ$-divisors approaching $D$, choose a positive integer $r_t$ for each $t \geq 1$ such that $r_t L_t$ is integral Cartier. Then, we see that using \autoref{boundonFsig}, 
    $$s_X(L_t) = r_t \, s_X(r_t L_t) \leq  \frac{C_{1}^{d+1}\vol(r_t L_t)}{\| r_t L_t \|^{d+1} (d+1)!} = \frac{C_{1}^{d+1}\vol(L_t)}{\| L_t \|^{d+1} (d+1)!}. $$
    Since $\|D \| \neq 0$, we have that $\| L_t \|$ approaches a non-zero number (namely, $ \| D \|$) and $\vol(L_t)$ approaches $0$ as $t \to \infty$ (since $D$ is not big), this shows that $s_X(L_t) \to 0$ as $t \to \infty$. As the sequence $\{L_t\}$ chosen was arbitrary, this shows that the $F$-signature function $s_X$ extends continuously by zero to all non-zero nef divisors $L$ that are not big.
    
Now suppose that $D$ is a big and nef divisor. Following the proof of \autoref{mainthm}, to prove that $s_X$ is locally Lipschitz for ample divisors around $D$, by \autoref{normchange}, we may a pick a suitable norm depending on $D$. Since the big cone $\mathrm{Big} (X)$ is an open subset of $N^1 _\RR(X)$, given $D$ in $\mathrm{Big}(X)$, we may pick a basis $e_{1}, \dots, e_\rho$ for $N^1 _\RR  (X)$ such that each $e_i$ is the class of a big invertible sheaf and such that $D$ in contained in the interior of the cone generated by the $e_i$'s (equivalently, $D = \sum a_{i} e_i$ with each $a_i > 0$). Let $\cC = \{ a_{i} e_i \, | \, a_i \geq 0 \}$ denote the closed cone generated by the $e_i$'s and $\| \, \|$ denote the sup-norm with respect to the basis $\{e_i\}$. Then, arguing verbatim as in the final step of the proof of \autoref{mainthm} and applying the argument to all ample $\QQ$-divisors $L, L^{\prime}$ contained in $\cC$, we get positive numbers $r(D)$ and $C(D)$ such that
$$ | s_X(L) - s_X(L^{\prime}) | \leq C(D) \| L - L^{\prime}  \|$$
for any two ample $\QQ$-divisors $L$ and $L^{\prime}$ contained in a ball of radius $r(D)$ around $D$. This proves that $s_X$ is uniformly continuous in a neighbourhood of $D$, which gives us a unique continuous extension of $s_X$ to $D$.
\end{proof}

The $F$-signature function of the blow-up of $\P^2$ at a point provides an instructive example of the behavior of the function on the boundary. For a formula for general Hirzebruch surfaces, see \cite{HiroseSawadaFsigHirzebruchSurface}.

\begin{eg}\label{ToricExamples}

Let $X=\mathrm{Bl}_x(\bP^2)$ be the blow-up of $\bP^2$ at $x = [0:0:1]$. Let $H$ denote the pull-back of a line in $\bP^2$ passing through $x$ and $E$ be the exceptional divisor for the blow-up. Then $H$ and $E$ form a basis for the N\'eron-Severi space and the nef cone of $X$ is given by the divisors $aH - bE$ such that $0 \leq b \leq a$. For $L = aH-bE$, we can compute the $F$-signature of $L$ using the formula described in \cite{VonKorffthesis}, and it is given by

    \[
s_X(L) = \begin{cases}\frac{a-b}{ab}, & \text{ if }  b\leq a\leq \frac{3}{2}b \\ \frac{2b-a}{2a(a-b)} + \frac{(3b-a)(2a-3b)}{6b(a-b)^2} + \frac{(2a-3b)^2}{2a(a-b)^2} , & \text{ if } \frac{3}{2}b \leq a \leq 2b  \\
\frac{1}{a}-\frac{b^3 + (a-2b)^3}{6ab(a-b)^2}  & \text{ if } 2b\leq a\leq 3b\\
\frac{1}{a} - \frac{b^2 + (a-2b)^2 + (a-3b)(a-2b)+ (a-3b)^2}{6a(a-b)^2}& \text{ if } 3b\leq a
\end{cases}
\]

\begin{figure}[h]
\centering
\includegraphics[width=0.6\textwidth]{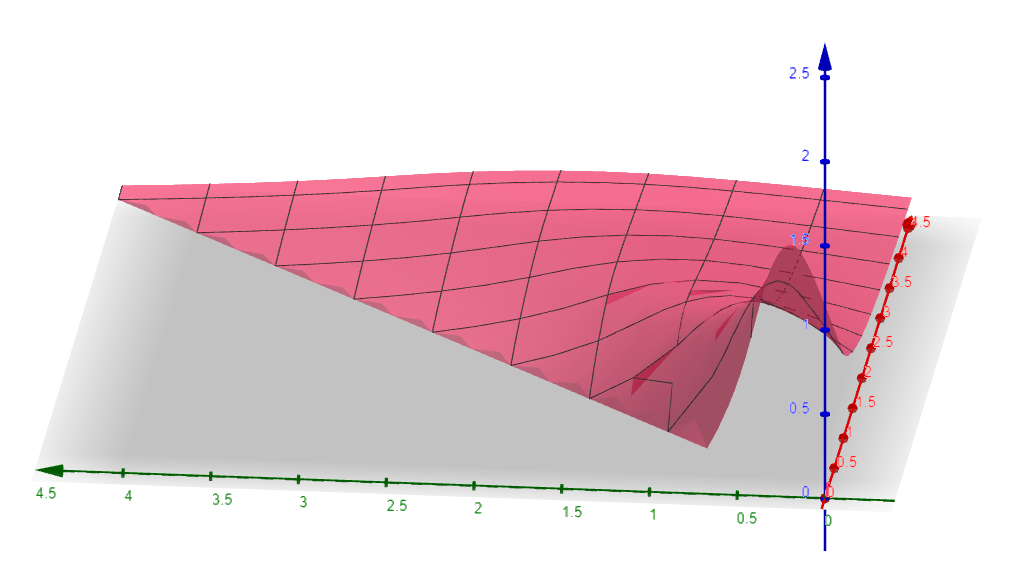}
\caption{The $F$-signature function of the blow up of $\PP^2$ at a point.}
\label{P2blowup1pt}
\end{figure}

 Note that along the line $a=b$, which corresponds to a nef but not big boundary face, the $F$-signature extends to the zero function (as proved in \autoref{extensionthm}). On the other hand, along $b=0$, which is the big and nef boundary face, letting $b\to 0$ yields $s_X(L) = \frac{1}{a}-\frac{a^2+a^2+a^2}{6a^2}=\frac{1}{2a}.$ It turns out that this corresponds to the $F$-signature of the cone over the pair $(\bP^2, \frak{m}_x)$ with respect to the divisor $aL$ on $\bP^2$ (see \cite[Theorem 4.20]{BlickleSchwedeTuckerFSigPairs1} for the definition of the $F$-signature of pairs).

\end{eg}

\begin{rem}
    \autoref{extensionthm} gives us a unique extension of the $F$-signature function to the non-zero classes in the nef cone of $X$. Further, we also know that for nef divisors that are not big, the extension is $0$. Thus, it is natural to ask what the extension to a big and nef divisor is. In forthcoming work, we explore this question and provide some answers in terms of $F$-signature of pairs, as indicated by \autoref{ToricExamples}.
\end{rem}

In particular, we can ask if the extension of the $F$-signature function to all big and nef divisors is positive. Motivated by this (and \autoref{boundonFsig}), we raise the following question on lower bounds for the $F$-signature function: 

\begin{question}\label{Question.lowerbound}
Let $X$ be a globally $F$-regular projective variety and $\| \, \|$ be a fixed norm on $N^1(X)$. Then, does there exist a constant $C>0$ (depending only on $X$) such that, we have
$$ s_{X}(L) \geq \frac{C \vol(L)}{ \|L\|^{d+1}} $$
for all ample $\QQ$-divisors $L$?
\end{question}



\section{Local upper bounds for the $F$-signature function}
In this section, we prove effective local upper bounds for the $F$-signature function (\autoref{F-sigdfn}).

\begin{thm} \label{localbounds}
Let $X$ be a globally $F$-regular projective variety. Let $d=\dim X $ be positive. Fix a basis $e_1, \dots, e_\rho$ for the N\'eron-Severi space $N^1 _\RR (X)$ such that each $e_i$ corresponds to the class of an ample and globally generated invertible sheaf. Let $\cC$ denote the simplicial cone generated by the $e_{i}$'s, that is, $\cC= \{ \sum a_{i} e_{i} \,  | \, a_{i} \in \RR_{\geq 0} \} $. Let $\| \, \|$ denote the sup-norm on $N^1 _{\RR} (X)$ with respect to the $e_{i}$'s.  Then, for any non-zero class $L$ in $\cC$, we have
\begin{equation} \label{localboundequation}  
s_X(L) \leq \frac{(d^2+2d)^{d+1} \vol(L)}{\lfloor \|L\|\rfloor ^{d+1} (d+1)!}. \end{equation}
\end{thm}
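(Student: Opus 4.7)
The plan is to follow the same general strategy as in the proof of \autoref{boundonFsig}, but to replace the geometric constant $C_1$ with the universal quantity $d^2+2d$ by exploiting the hypothesis that each basis element $e_i$ is ample and globally generated. First I would observe that it suffices to prove the bound for ample $\QQ$-Cartier classes $L$ in the interior of $\cC$: the non-big boundary classes give both sides equal to zero by \autoref{extensionthm}, the big-and-nef boundary classes follow by continuity of $s_X$ (also from \autoref{extensionthm}) together with continuity of $\vol$, and the bound for an ample $\QQ$-Cartier $L$ reduces to the one for an integral Cartier $rL$ via the scaling relation $s_X(\lambda L)=\lambda^{-1}s_X(L)$ (\autoref{propertiesofs}), compatible with $\vol(\lambda L) = \lambda^d \vol(L)$ and the floor function.

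So assume $L = \sum_i a_i e_i$ is an integral ample Cartier divisor with $\|L\| = \max_i a_i = a_j$, and set $n = \lfloor \|L\| \rfloor$. Writing $L = n e_j + H$ with $H = (a_j - n) e_j + \sum_{i \neq j} a_i e_i \in \cC$ gives a decomposition where $H$ is nef (indeed effective, since each $e_i$ is globally generated). The heart of the proof is the effective vanishing statement: for every $e \geq 1$ and every integer $m \geq (d^2+2d) p^e/n$, one has $I_e(mL) = H^0(mL)$. Equivalently, $-(p^e-1)K_X - mL$ carries no global sections in this range. Here I would invoke the sharper version \autoref{lem.lowerdim} promised in the remark after \autoref{lem.Biglowerdim}; the underlying mechanism is a Castelnuovo--Mumford regularity argument applied to the decomposition $L = ne_j + H$, using the global generation of $e_j$ together with the Kawamata--Viehweg-type vanishing \autoref{vanishingforGFR} for nef divisors on globally $F$-regular varieties, which yields a bound depending only on the dimension $d$.

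Granting the vanishing, by \autoref{formulaforFsig} we have
\[
s_X(L) = \lim_{e \to \infty} \frac{1}{p^{e(d+1)}} \sum_{m=0}^{M(e)} \dim_k \frac{H^0(mL)}{I_e(mL)}, \qquad M(e) := \left\lceil \frac{(d^2+2d)\, p^e}{n} \right\rceil - 1.
\]
Bounding each summand by $\dim_k H^0(mL)$ and using that, for $L$ ample, the Hilbert polynomial yields $\dim_k H^0(mL) = \vol(L)\, m^d/d! + O(m^{d-1})$, the sum is
\[
\sum_{m=0}^{M(e)} \dim_k H^0(mL) = \frac{\vol(L)\, M(e)^{d+1}}{(d+1)!} + O\!\left(M(e)^d\right).
\]
Dividing by $p^{e(d+1)}$ and letting $e \to \infty$, the error term $O(M(e)^d) = O(p^{ed})$ contributes zero in the limit, while the leading term converges to $(d^2+2d)^{d+1} \vol(L)/(n^{d+1}(d+1)!)$, producing the desired inequality \autoref{localboundequation}.

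The main obstacle is obtaining the universal effective constant $d^2+2d$ in the vanishing step, independent of $X$ and of the specific basis. The two proofs of vanishing sketched in the proof of \autoref{lem.Biglowerdim} produce only a constant $C_1 = C_1(X)$ tied to the geometry of the pseudoeffective cone and to a chosen divisor $A$ with $A+K_X$ effective. To upgrade this to the universal $d(d+2)$ one must leverage global generation of $e_j$ — for instance by reducing, through the decomposition $L = ne_j + H$, to a regularity statement for $-(p^e-1)K_X$ twisted by multiples of $e_j$, and then using \autoref{vanishingforGFR} to kill the higher cohomology in a Mumford-style inductive argument. Once that input is in place, the rest of the argument is a direct Hilbert-polynomial estimate as above.
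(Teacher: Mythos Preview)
Your proposal is correct and matches the paper's approach: reduce by continuity and scaling to an integral Cartier $L$ with $\|L\|$ large, write $L = \lfloor \|L\| \rfloor\, e_j + H$ with $H$ nef, apply \autoref{lem.lowerdim}(b) to get $I_e(mL)=H^0(mL)$ for $m > (d^2+2d)p^e/\lfloor \|L\|\rfloor$, and finish with the Hilbert-polynomial estimate exactly as in \autoref{boundonFsig}. On your stated obstacle: the paper's proof of \autoref{lem.lowerdim} does use Mumford regularity together with \autoref{vanishingforGFR} to show the section ring (and the module for $H$) is generated in degree $\leq d$, but the key passage from this to the splitting bound is not a dualizing/effectivity argument on $-(p^e-1)K_X-mL$; it goes through the Brian\c{c}on--Skoda theorem in the strongly $F$-regular section ring, applied to a $(d+1)$-element reduction of the irrelevant ideal, which is the source of the factor $d(d+1)$ (and the extra $+d$ comes from the module-regularity step for $nL+H$).
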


\begin{lem}
\label{lem.lowerdim}
Suppose $L$ is a globally generated ample divisor and $H$ any nef divisor on $X$. Then, for all $e \geq 1$, we have:
\begin{enumerate}
    \item $$ I_{e}(mL) = H^{0}(mL) \textrm{ for $m > (d^2+d) p^{e}$}, $$
    \item $$ I_{e} (m(nL+H)) = H^{0}(m(nL+H)) \textrm{ for all $m > \frac{(d^2 + 2d) p^{e}}{n}$}. $$
\end{enumerate}
\end{lem}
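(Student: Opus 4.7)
The plan is to reduce the problem to a vanishing question via duality for the Frobenius morphism, and then derive a contradiction from an intersection-theoretic estimate made possible by the globally generated hypothesis on $L$.

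First, I will invoke the duality isomorphism already used in the proof of \autoref{lem.Biglowerdim}:
$$\sHom_{\cO_X}\!\big(F^e_* \cO_X(D),\, \cO_X\big) \;\cong\; F^e_* \cO_X\big(-(p^e-1)K_X - D\big)$$
for any Cartier divisor $D$. Taking global sections, the equality $I_e(mL) = H^0(mL)$ becomes equivalent to the assertion that $-(p^e-1)K_X - mL$ is not linearly equivalent to any effective divisor, and similarly for $D = m(nL + H)$ in part (b).

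For part (a), suppose for contradiction that $-(p^e-1)K_X - mL \sim E$ for some effective divisor $E$. Intersecting both sides with the nef class $L^{d-1}$ (since $L$ is ample, hence nef) yields $m L^d \leq (p^e-1)(-K_X \cdot L^{d-1})$. It will thus suffice to establish the universal bound
$$-K_X \cdot L^{d-1} \;\leq\; (d^2 + d)\, L^d$$
for any globally generated ample $L$ on $X$; this will immediately force $m \leq (p^e-1)(d^2+d) < (d^2+d) p^e$, contradicting the hypothesis $m > (d^2+d) p^e$. The analogous argument for part (b) uses $H \cdot L^{d-1} \geq 0$ (nefness of $H$) to obtain $m n L^d \leq (p^e-1)(-K_X \cdot L^{d-1})$, and the same key estimate then yields the bound on $m$, with the small loss from $d^2 + d$ to $d^2 + 2d$ absorbing whatever slack arises from handling the cross-term $m (H \cdot L^{d-1})$.

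The main obstacle is establishing the key intersection estimate $-K_X \cdot L^{d-1} \leq (d^2+d) L^d$. My plan is to use the globally generated hypothesis to realize $L^{d-1}$ by an actual reduced curve $C \subset X$, obtained as the scheme-theoretic complete intersection of $d-1$ general members of $|L|$ (a Bertini-type argument, working with reducedness rather than smoothness since we are in positive characteristic). Adjunction then gives $\deg(K_C) = (K_X + (d-1)L) \cdot C = K_X \cdot L^{d-1} + (d-1) L^d$, and the non-negativity of the arithmetic genus of a connected reduced projective curve, $\deg K_C = 2 p_a(C) - 2 \geq -2$, already yields the sharper bound $-K_X \cdot L^{d-1} \leq (d-1)L^d + 2$, which is well within the slack constant $d^2 + d$ as soon as $L^d \geq 1$. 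The delicate point is that a general complete intersection of globally generated line bundles need not remain reduced in positive characteristic; the generous constant $d^2 + d$ in the statement is meant to accommodate any degeneration that must be handled in the Bertini step.
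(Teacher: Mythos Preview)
Your approach is genuinely different from the paper's, and the reduction via Grothendieck duality and intersection with $L^{d-1}$ is structurally sound. However, the crux of your argument is the estimate $-K_X \cdot L^{d-1} \leq (d^2+d)\,L^d$, and your proposed proof of it has a real gap that you yourself flag but do not close. In positive characteristic, a general member of a base-point-free linear system on a normal variety need not be reduced, and iterating to a complete-intersection curve only compounds the problem. Your sentence ``the generous constant $d^2+d$ \dots\ is meant to accommodate any degeneration'' is not an argument: if the curve $C$ is non-reduced, the inequality $\deg K_C \geq -2$ simply fails, and no finite constant absorbs this. You would need either a characteristic-free Bertini statement producing a reduced (or at least Gorenstein with controlled dualizing degree) curve, or an entirely different proof of the intersection bound. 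As written, the key estimate is asserted, not proved.

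The paper avoids intersection theory and Bertini altogether. It works directly in the section ring $S = S(X,L)$: the vanishing available on globally $F$-regular varieties (\autoref{vanishingforGFR}) makes $\cO_X$ Castelnuovo--Mumford $d$-regular with respect to $L$, so $S$ is generated in degrees $\leq d$. One then chooses a reduction $(x_0,\dots,x_d)$ of the homogeneous maximal ideal $\mathfrak m$ with each $x_i$ of degree $\leq d$ and applies the Brian\c{c}on--Skoda theorem in the strongly $F$-regular ring $S$ to get $\mathfrak m^{(d+1)p^e} \subset (x_0^{p^e},\dots,x_d^{p^e})$. Any $x \in S_m$ with $m > d(d+1)p^e$ lies in this Frobenius power, hence $1 \mapsto F^e_* x$ cannot split. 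Part (b) follows by showing the section module of $\cO_X(jH)$ is likewise generated in degrees $\leq d$ over $S$ and decomposing $f = \sum r_i f_i$; the shift from $d^2+d$ to $d^2+2d$ comes from the degree $\leq d$ lost to the $f_i$, not from any cross-term with $H$.

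On that last point: your remark that the loss to $d^2+2d$ in part (b) is needed to ``absorb slack from the cross-term $m(H\cdot L^{d-1})$'' is backwards. Since $H$ is nef, that term is non-negative and appears with the correct sign to \emph{strengthen} your inequality; dropping it already gives $mn\,L^d \leq (p^e-1)(-K_X \cdot L^{d-1})$, so your approach (were the key estimate established) would in fact yield the same bound $(d^2+d)p^e/n$ for part (b) as for part (a).
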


\begin{proof}
Let $S$ be the section ring of $X$ with respect to $L$. And for any $j \geq 0$, let $M^j$ be the $S$-module $\bigoplus _{t \geq 0} \cO_{X}(jH + tL)$.

\begin{enumerate}
    \item First, we claim that $S$ is generated as a graded ring by homogeneous elements of degree at most $d$. This follows from Mumford's Theorem \cite[Theorem 1.8.5]{LazarsfeldPositivity1}, if we show that the trivial bundle $\cO_{X}$ is $d$-regular with respect to $L$. Since $X$ is globally $F$-regular and $L$ is ample, by \autoref{vanishingforGFR}, we have that
    $$ H^{i} (X, \cO_{X}((d-i )L)) = 0 \quad \text{for all $i >0$}    . $$
    This implies that $\cO_{X}$ is $d$-regular with respect to $L$ and hence that $S$ is generated by elements of degree at most $d$. 
    
    
     Since the section ring $S$ is generated by elements of degree $\leq d$, the homogeneous maximal ideal $\mathfrak{m}=S_{>0}$ is generated in degrees $\leq d$. By \cite[Proposition 8.3.8]{HunekeSwansonIntegralClosure}, there exist elements $x_0, \dots, x_d$ (not necessarily homogeneous), such that all terms of each $x_i$ have degree at most $d$, and the integral closure  $\overline{(x_0,\dots, x_d)}$ is equal to the maximal ideal $\fm$. Now, by using the Brian\c con-Skoda theorem in the strongly $F$-regular ring $S$ \cite[Theorem 5.4]{HochsterHunekeTC1}, we have
    \[\frak{m}^{(d+1)p^e}=\overline{(x_0^{p^e}, \dots, x_d^{p^e})^{d+1}} \subseteq (x_0^{p^e}, \dots, x_d^{p^e}).\]
    
    Therefore, if $m\geq d(d+1)p^e$, for any element $x\in S_m=H^0(mL)$, by the pigeon-hole principle, we have $x \in \fm^{(d+1)p^e}$, and consequently, $x \in (x_0 ^{p^e}, \dots, x_d ^{p^e})$. Hence, the map $\cO_X\to F^e_*\cO_X(mL)$ sending $1\mapsto F^e_*x$ cannot split.
    
    \item Similarly as in part (a), we claim that for any $j \geq 0$, $M^j$ is generated over $S$ by elements of degree at most $d$. For this, again by Mumford's theorem, it is enough to show that $\cO_{X}(jH)$ is $d$-regular with respect to $L$. Since $H$ is nef, by \autoref{vanishingforGFR} we again have:
    $$ H^{i} (X, \cO_{X}(jH + (d-i )L)) = 0 \quad \text{for all $i >0$}    . $$
   Suppose $f \in  H^{0}(m(nL+H) \setminus I_{e} (m(nL+H))$ and $m > \frac{(d^2 + 2d) p^{e}}{n}$, then we may write $f = \sum r_{i} f_{i}$ for $r_{i} \in S$ and $f_{i} \in M^{m}$ with degree of $f_{i}$ at most $d$. Then the degree of each $r_{i}$ is at least $(d^2 + d) p^{e}$. Now, since by assumption, the map $\cO_{X} \to \Fe \cO_{X}(m(nL+H))$ sending $1$ to $\Fe f$ splits, we must have that for some $i$, $r_{i} \in H^{0}(kL) \setminus I_{e}(kL)$ for a suitable $k > (d^2 + d)p^{e}$, contradicting part (a) of the lemma. Hence, we must have $I_{e} (m(nL+H)) = H^{0}(m(nL+H))$ for all $m > \frac{(d^2 + 2d) p^{e}}{n}$. This completes the proof of the lemma. \qedhere \end{enumerate}\end{proof}

\begin{lem} \label{uniformeffectiveconstants}
    Fix a basis $e_1, \dots, e_\rho$ of $N^1 _\RR (X)$ such that each $e_i$ corresponds to the class of an ample and globally generated invertible sheaf. Let $\cC$ denote the simplicial cone generated by the $e_{i}$'s, that is, $\cC= \{ \sum a_{i} e_{i} \,  | \, a_{i} \in \RR_{\geq 0} \} $. Let $\| \, \|$ denote the sup-norm on $N^1 _{\RR} (X)$ with respect to the $e_{i}$'s. Then, for any invertible sheaf $\cL$ such that its class $L$ in the N\'{e}ron-Severi space satisfies $L \in \cC$ and $\|L \| \geq d$ (where $d$ is the dimension of $X$), we have
    \begin{itemize}
        \item [(a)] $\cL$ is ample and globally generated.

        \item [(b)] Further,
        $$ I_{e}(mL) = H^{0}(mL) \textrm{ for all $m > \frac{(d^2+2d) p^{e}}{\lfloor \|L\| \rfloor}$}. $$
    \end{itemize}
     
\end{lem}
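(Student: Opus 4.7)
The plan is to single out a coordinate where the sup-norm is achieved and peel off as many copies of the corresponding globally generated ample generator as possible. Writing $L=\sum_{i=1}^{\rho}a_ie_i$ with each $a_i\geq 0$ (since $L\in\cC$), I pick an index $j$ with $a_j=\|L\|\geq d$. For any integer $i$ with $0\leq i\leq\lfloor a_j\rfloor$, the class $L-i e_j=(a_j-i)e_j+\sum_{k\neq j}a_ke_k$ has all non-negative coordinates in the ample basis and is integral (being the difference of two integral classes), so $L-ie_j$ is a nef Cartier divisor.

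For part (a), since $\|L\|\geq d$ forces $\lfloor a_j\rfloor\geq d$, the classes $L-ie_j$ are nef Cartier divisors for $i=1,\dots,d$. Applying \autoref{vanishingforGFR} then gives $H^i(X,\cO_X(L-ie_j))=0$ for every $i\geq 1$, which is precisely the statement that $\cL$ is $0$-regular with respect to the ample globally generated line bundle $\cO_X(e_j)$. Castelnuovo--Mumford regularity (exactly as invoked in the proof of \autoref{lem.lowerdim}(a) via \cite[Theorem 1.8.5]{LazarsfeldPositivity1}) then forces $\cL$ to be globally generated. Ampleness of $\cL$ follows from the decomposition $L=de_j+(L-de_j)$ as a sum of the ample class $de_j$ and the nef class $L-de_j$.

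For part (b), I set $n=\lfloor\|L\|\rfloor=\lfloor a_j\rfloor$ and $H=L-ne_j$, so that $L=ne_j+H$ with $e_j$ ample and globally generated and $H$ a nef Cartier divisor (by the first paragraph). The conclusion is then an immediate consequence of \autoref{lem.lowerdim}(b) applied to the triple $(e_j,H,n)$ in place of $(L,H,n)$ there, which yields $I_e(m(ne_j+H))=H^0(m(ne_j+H))$ for all $m>(d^2+2d)p^e/n$, i.e.\ the desired bound with $n=\lfloor\|L\|\rfloor$. I do not anticipate any significant obstacle here; the lemma is essentially a translation of the sup-norm lower bound on $L$ into the integer $n$ counting how many copies of the ample generator $e_j$ can be extracted while keeping the residue nef, after which the preceding lemma applies directly.
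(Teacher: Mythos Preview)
Your proposal is correct and follows essentially the same approach as the paper: pick the coordinate $j$ achieving the sup-norm, peel off copies of $e_j$ to exhibit $\cL$ as $0$-regular with respect to $\cO_X(e_j)$ via \autoref{vanishingforGFR} for part (a), and write $L=\lfloor\|L\|\rfloor e_j+H$ with $H$ nef to invoke \autoref{lem.lowerdim}(b) for part (b). Your presentation is slightly more explicit than the paper's in justifying that $L-ie_j$ is nef and Cartier, but the argument is the same.
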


\begin{proof}
    \begin{enumerate}
        \item Ampleness of $\cL$ follows from the assumption that $L$ lies in $\cC$ and $L$ is non-zero since $\| L \| \neq 0$. It remains to show global generation of $\cL$. For this, we note that since $\|L\| \geq d$, there is some $i$ such that we may decompose the divisor $L$ as $L = dL_{i} + H$ where $H$ is some nef Cartier divisor and $L_i$ is a Cartier divisor corresponding to the class $e_i$. This follows from the assumption that $e_i$'s  are integral, ample and globally generated and the fact that the sup-norm is achieved by some coordinate of $L$. 
        Hence, applying \autoref{vanishingforGFR}, we have
$$ H^{p} (X, \cO_{X}(  L -  p \, L_i)) = 0  \quad \text{for all $p > 0$.} $$
Therefore, $\cL$ is $0$-regular with respect to the globally generated ample divisor $L_i$. Hence, $\cL$ is globally generated itself.

        \item Since $\|L\| \geq d$, for some $0 \leq i \leq \rho$, we may write $L = \lfloor \|L\| \rfloor e_{i} + H$ for some integral and nef class $H$. Now, applying Part (b) of \autoref{lem.lowerdim}, we get
        $$ I_{e}(mL) = H^{0}(mL) \textrm{ for all $m > \frac{(d^2+2d) p^{e}}{\lfloor \|L\| \rfloor}$}. $$

    \end{enumerate} 
\end{proof}

\begin{proof}[Proof of \autoref{localbounds}: ]
By \autoref{mainthm}, the $F$-signature function is continuous, hence we may prove \autoref{localbounds} only when $L$ is an ample $\QQ$-divisor. Further, since both sides of (\ref{localboundequation}) scale inverse-linearly, we may assume that $L$ is a Cartier divisor and $\|L\| \geq d$. Then, applying Part (b) of \autoref{uniformeffectiveconstants}, the Theorem follows from \autoref{boundonFsig} by using $\frac{d^2 + 2d}{\lfloor \|L\| \rfloor}$ instead of $\frac{C_{1}}{\|L\|}$.
\end{proof}

\vskip 12pt

{\bf Acknowledgements:} We would like to thank Kevin Tucker for numerous useful discussions and suggestions for this paper when they were visiting Chicago. We would also like to thank our advisors Karl Schwede and Karen Smith for encouraging us to start this paper, for many valuable discussions, and for providing support for this project. We thank Harold Blum, Christopher Hacon, and Mircea Musta\c t\u a for valuable suggestions and discussions. We would also like to thank Anna Brosowsky, Alapan Mukhopdhyay, Devlin Mallory, Qingyuan Xue, and Jos\'e Y\'a\~nez for many helpful discussions. We thank the referees for a careful reading of the paper and providing numerous suggestions that have greatly improved the paper. Lastly, we thank the mathematics departments at the University of Utah, the University of Illinois, Chicago, and the University of Michigan for their hospitality and support during the various visits by the authors.

\bibliographystyle{skalpha}
\bibliography{MainBib}

\newcommand{\etalchar}[1]{$^{#1}$}
\def\cfudot#1{\ifmmode\setbox7\hbox{$\accent"5E#1$}\else
  \setbox7\hbox{\accent"5E#1}\penalty 10000\relax\fi\raise 1\ht7
  \hbox{\raise.1ex\hbox to 1\wd7{\hss.\hss}}\penalty 10000 \hskip-1\wd7\penalty
  10000\box7}
\providecommand{\bysame}{\leavevmode\hbox to3em{\hrulefill}\thinspace}
\providecommand{\MR}{\relax\ifhmode\unskip\space\fi MR}
\providecommand{\MRhref}[2]{%
  \href{http://www.ams.org/mathscinet-getitem?mr=#1}{#2}
}
\providecommand{\href}[2]{#2}
\begin{thebibliography}{ELM{\etalchar{+}}05}

\bibitem[AL03]{AberbachLeuschke}
{\sc I.~M. Aberbach and G.~J. Leuschke}: \emph{The {$F$}-signature and strong
  {$F$}-regularity}, Math. Res. Lett. \textbf{10} (2003), no.~1, 51--56.
  {\sf\scriptsize MR1960123 (2004b:13003)}

\bibitem[BST12]{BlickleSchwedeTuckerFSigPairs1}
{\sc M.~Blickle, K.~Schwede, and K.~Tucker}: \emph{{$F$}-signature of pairs and
  the asymptotic behavior of {F}robenius splittings}, Adv. Math. \textbf{231}
  (2012), no.~6, 3232--3258. {\sf\scriptsize 2980498}

\bibitem[Bou02]{BoucksomVolumeofLineBundle}
{\sc S.~Boucksom}: \emph{On the volume of a line bundle}, Internat. J. Math.
  \textbf{13} (2002), no.~10, 1043--1063. {\sf\scriptsize 1945706}

\bibitem[CRST18]{carvajal-rojas_fundamental_2016}
{\sc J.~Carvajal-Rojas, K.~Schwede, and K.~Tucker}: \emph{Fundamental groups of
  ${F}$-regular singularities via ${F}$-signature}, Ann. Sci. \'{E}c. Norm.
  Sup\'{e}r. (4) \textbf{51} (2018), no.~4, 993--1016.

\bibitem[CR22]{CarvajalRojasFiniteTorsors}
{\sc J.~A. Carvajal-Rojas}: \emph{Finite torsors over strongly {$F$}-regular
  singularities}, \'{E}pijournal G\'{e}om. Alg\'{e}brique \textbf{6} (2022),
  Art. 1, 30. {\sf\scriptsize 4391081}

\bibitem[DSPY22]{DeStefaniPolstraYaoGlobalFsplittingRatio}
{\sc A.~De~Stefani, T.~Polstra, and Y.~Yao}: \emph{Global {F}-splitting ratio
  of modules}, J. Algebra \textbf{610} (2022), 773--792. {\sf\scriptsize
  4470700}

\bibitem[ELM{\etalchar{+}}05]{EinLazMusNakPopAsymptoticInvariantsofLineBundles}
{\sc L.~Ein, R.~Lazarsfeld, M.~Musta\c{t}\v{a}, M.~Nakamaye, and M.~Popa}:
  \emph{Asymptotic invariants of line bundles}, Pure Appl. Math. Q. \textbf{1}
  (2005), no.~2, Special Issue: In memory of Armand Borel. Part 1, 379--403.
  {\sf\scriptsize 2194730}

\bibitem[Fol99]{FollandRealAnalysis}
{\sc G.~B. Folland}: \emph{Real analysis}, second ed., Pure and Applied
  Mathematics (New York), John Wiley \& Sons, Inc., New York, 1999, Modern
  techniques and their applications, A Wiley-Interscience Publication.
  {\sf\scriptsize 1681462}

\bibitem[GLP{\etalchar{+}}15]{gongyo_rational_2015}
{\sc Y.~Gongyo, Z.~Li, Z.~Patakfalvi, K.~Schwede, H.~Tanaka, and R.~Zong}:
  \emph{On rational connectedness of globally {$F$}-regular threefolds}, Adv.
  Math. \textbf{280} (2015), 47--78.

\bibitem[GOST15]{GongyoSingularitiesofcoxrings}
{\sc Y.~Gongyo, S.~Okawa, A.~Sannai, and S.~Takagi}: \emph{Characterization of
  varieties of fano type via singularities of cox rings}, J. Algebraic Geom.
  \textbf{24} (2015), no.~1, 159--182.

\bibitem[GT19]{GongyoTakagiInjectivityThmforGFR}
{\sc Y.~Gongyo and S.~Takagi}: \emph{Koll\'{a}r's injectivity theorem for
  globally {$F$}-regular varieties}, Eur. J. Math. \textbf{5} (2019), no.~3,
  872--880. {\sf\scriptsize 3993268}

\bibitem[HM06]{HaconMckernanBoundednessofpluricanonicalmaps}
{\sc C.~D. Hacon and J.~McKernan}: \emph{Boundedness of pluricanonical maps of
  varieties of general type}, Invent. Math. \textbf{166} (2006), no.~1, 1--25.
  {\sf\scriptsize 2242631}

\bibitem[HX15]{HaconXuThreeDimensionalMinimalModel}
{\sc C.~D. Hacon and C.~Xu}: \emph{On the three dimensional minimal model
  program in positive characteristic}, J. Amer. Math. Soc. \textbf{28} (2015),
  no.~3, 711--744.

\bibitem[Har77]{Hartshorne}
{\sc R.~Hartshorne}: \emph{Algebraic geometry}, Springer-Verlag, New York,
  1977, Graduate Texts in Mathematics, No. 52. {\sf\scriptsize MR0463157 (57
  \#3116)}

\bibitem[HS17]{HiroseSawadaFsigHirzebruchSurface}
{\sc D.~Hirose and T.~Sawada}: \emph{Korff {F}-signatures of {H}irzebruch
  surfaces}, \url{https://arxiv.org/abs/1701.01905} (2017).

\bibitem[HH89]{HochsterHunekeTightClosureAndStrongFRegularity}
{\sc M.~Hochster and C.~Huneke}: \emph{Tight closure and strong
  {$F$}-regularity}, M\'em. Soc. Math. France (N.S.) (1989), no.~38, 119--133,
  Colloque en l'honneur de Pierre Samuel (Orsay, 1987). {\sf\scriptsize
  MR1044348 (91i:13025)}

\bibitem[HH90]{HochsterHunekeTC1}
{\sc M.~Hochster and C.~Huneke}: \emph{Tight closure, invariant theory, and the
  {B}rian\c con-{S}koda theorem}, J. Amer. Math. Soc. \textbf{3} (1990), no.~1,
  31--116. {\sf\scriptsize MR1017784 (91g:13010)}

\bibitem[HL02]{HunekeLeuschkeTwoTheoremsAboutMaximal}
{\sc C.~Huneke and G.~J. Leuschke}: \emph{Two theorems about maximal
  {C}ohen-{M}acaulay modules}, Math. Ann. \textbf{324} (2002), no.~2, 391--404.
  {\sf\scriptsize MR1933863 (2003j:13011)}

\bibitem[HS06]{HunekeSwansonIntegralClosure}
{\sc C.~Huneke and I.~Swanson}: \emph{Integral closure of ideals, rings, and
  modules}, London Mathematical Society Lecture Note Series, vol. 336,
  Cambridge University Press, Cambridge, 2006. {\sf\scriptsize MR2266432
  (2008m:13013)}

\bibitem[Kaw21]{KawakamiBogomolovSommese}
{\sc T.~Kawakami}: \emph{Bogomolov-{S}ommese type vanishing for globally
  {$F$}-regular threefolds}, Math. Z. \textbf{299} (2021), no.~3-4, 1821--1835.
  {\sf\scriptsize 4329270}

\bibitem[Kle66]{Kleimannumericaltheory}
{\sc S.~L. Kleiman}: \emph{Toward a numerical theory of ampleness}, Ann. of
  Math. (2) \textbf{84} (1966), 293--344. {\sf\scriptsize 206009}

\bibitem[K\"06]{KuronyaAsymptoticcohomologicalfunctions}
{\sc A.~K\"{u}ronya}: \emph{Asymptotic cohomological functions on projective
  varieties}, Amer. J. Math. \textbf{128} (2006), no.~6, 1475--1519.
  {\sf\scriptsize 2275909}

\bibitem[Laz04]{LazarsfeldPositivity1}
{\sc R.~Lazarsfeld}: \emph{Positivity in algebraic geometry. {I}}, Ergebnisse
  der Mathematik und ihrer Grenzgebiete. 3. Folge. A Series of Modern Surveys
  in Mathematics [Results in Mathematics and Related Areas. 3rd Series. A
  Series of Modern Surveys in Mathematics], vol.~48, Springer-Verlag, Berlin,
  2004, Classical setting: line bundles and linear series. {\sf\scriptsize
  MR2095471 (2005k:14001a)}

\bibitem[LM09]{LazarsfeldMustataConvexbodies}
{\sc R.~Lazarsfeld and M.~Musta\c{t}\u{a}}: \emph{Convex bodies associated to
  linear series}, Ann. Sci. \'{E}c. Norm. Sup\'{e}r. (4) \textbf{42} (2009),
  no.~5, 783--835. {\sf\scriptsize 2571958}

\bibitem[LLX20]{LiLiuXuAGuidedTour}
{\sc C.~Li, Y.~Liu, and C.~Xu}: \emph{A guided tour to normalized volume},
  Geometric analysis---in honor of {G}ang {T}ian's 60th birthday, Progr. Math.,
  vol. 333, Birkh\"{a}user/Springer, Cham, [2020] \copyright 2020,
  pp.~167--219. {\sf\scriptsize 4181002}

\bibitem[MPST19]{MaPolstraSchwedeTuckerFsigBirational}
{\sc L.~Ma, T.~Polstra, K.~Schwede, and K.~Tucker}: \emph{{$F$}-signature under
  birational morphisms}, Forum Math. Sigma \textbf{7} (2019), Paper No. e11,
  20. {\sf\scriptsize 3940231}

\bibitem[Mar22]{MartinTorsionDivisors}
{\sc I.~Martin}: \emph{The number of torsion divisors in a strongly
  {$F$}-regular ring is bounded by the reciprocal of {$F$}-signature}, Comm.
  Algebra \textbf{50} (2022), no.~4, 1595--1605. {\sf\scriptsize 4391510}

\bibitem[Pol22]{PolstramaximalCM}
{\sc T.~Polstra}: \emph{A theorem about maximal {C}ohen-{M}acaulay modules},
  Int. Math. Res. Not. IMRN (2022), no.~3, 2086--2094. {\sf\scriptsize 4373232}

\bibitem[SS10]{SchwedeSmithLogFanoVsGloballyFRegular}
{\sc K.~Schwede and K.~E. Smith}: \emph{Globally {$F$}-regular and log {F}ano
  varieties}, Adv. Math. \textbf{224} (2010), no.~3, 863--894. {\sf\scriptsize
  2628797 (2011e:14076)}

\bibitem[Smi97]{SmithVanishingSingularitiesAndEffectiveBounds}
{\sc K.~E. Smith}: \emph{Vanishing, singularities and effective bounds via
  prime characteristic local algebra}, Algebraic geometry---{S}anta {C}ruz
  1995, Proc. Sympos. Pure Math., vol.~62, Amer. Math. Soc., Providence, RI,
  1997, pp.~289--325. {\sf\scriptsize MR1492526 (99a:14026)}

\bibitem[Smi00]{SmithGloballyFRegular}
{\sc K.~E. Smith}: \emph{Globally {F}-regular varieties: applications to
  vanishing theorems for quotients of {F}ano varieties}, Michigan Math. J.
  \textbf{48} (2000), 553--572, Dedicated to William Fulton on the occasion of
  his 60th birthday. {\sf\scriptsize MR1786505 (2001k:13007)}

\bibitem[SVdB97]{SmithVanDenBerghSimplicityOfDiff}
{\sc K.~E. Smith and M.~Van~den Bergh}: \emph{Simplicity of rings of
  differential operators in prime characteristic}, Proc. London Math. Soc. (3)
  \textbf{75} (1997), no.~1, 32--62. {\sf\scriptsize MR1444312 (98d:16039)}

\bibitem[Tak06]{TakayamaPluricanonicalsystems}
{\sc S.~Takayama}: \emph{Pluricanonical systems on algebraic varieties of
  general type}, Invent. Math. \textbf{165} (2006), no.~3, 551--587.
  {\sf\scriptsize 2242627}

\bibitem[Tay19]{TaylorInvAdjFsig}
{\sc G.~Taylor}: \emph{Inversion of adjunction for $f$-signature}, 2019.

\bibitem[Tuc12]{TuckerFSigExists}
{\sc K.~Tucker}: \emph{{$F$}-signature exists}, To appear in Inventiones
  Mathematicae, arXiv:1103.4173.

\bibitem[VK12]{VonKorffthesis}
{\sc M.~R. Von~Korff}: \emph{The {F}-{S}ignature of {T}oric {V}arieties},
  ProQuest LLC, Ann Arbor, MI, 2012, Thesis (Ph.D.)--University of Michigan.
  {\sf\scriptsize 3093997}

\end{thebibliography}

\end{document}